\journal{Elsevier}
\newcommand{\rhob}{\rho_b}
\newcommand{\bmu}{\bm{u}}
\newcommand{\bmv}{\dot{\bm{u}}}
\newcommand{\bma}{\ddot{\bm{u}}}
\newcommand{\dt}{\Delta t}
\renewcommand{\Omega}{\varOmega}
\newtheorem{defi}{Definition}[section]
\newcounter{alg}
\crefname{alg}{Alg.}{Algs.}           
\newcommand{\novelalgref}[1]{{\emph{Algorithm}}~\ref{#1}}   
\newcommand{\novelalgrefs}[2]{{\emph{Algorithms}}~\ref{#1} and \ref{#2}}  
\newcommand{\novelalgrefss}[2]{{\emph{Algs.}}~\ref{#1} and \ref{#2}}  
\crefname{figure}{Fig.}{Figs.} 
\crefname{subfigure}{Fig.}{Figs.} 
\crefname{table}{Table}{}
\crefname{equation}{Eq.}{Eqs.}
\newtheorem{theorem}{Theorem}[section]
\begin{document}
	
	\begin{frontmatter}
		
		\title{{\bf Two self-starting single-solve third-order explicit integration algorithms\\for second-order nonlinear dynamics}}
		
		\author[hit]{Yaokun Liu}
		\ead{liuyk2001@gmail.com}
		\author[hit]{Jinze Li\corref{cor1}}
		\ead{pinkie.ljz@gmail.com}
		\author[hit]{Kaiping Yu\corref{cor1}}
		\ead{kaipingyu1968@gmail.com}
		
		\cortext[cor1]{Corresponding authors.}

		
		\affiliation[hit]{organization={School of Astronautics, Harbin Institute of Technology},
			addressline={No.~92 West Dazhi Street},
			city={Harbin},
			postcode={150001},
			country={China}}
		
		\begin{abstract}
		The single-step explicit time integration methods have long been valuable for solving large-scale nonlinear structural dynamic problems, classified into single-solve and multi-sub-step approaches. However, no existing explicit single-solve methods achieve third-order accuracy. The paper addresses this gap by proposing two new third-order explicit algorithms developed within the framework of self-starting single-solve time integration algorithms, which incorporates 11 algorithmic parameters. The study reveals that fully explicit methods with single-solve cannot reach third-order accuracy for general dynamic problems. Consequently, two novel algorithms are proposed: \textit{Algorithm} 1 is a fully explicit scheme that achieves third-order accuracy in displacement and velocity for undamped problems; \textit{Algorithm} 2, which employs implicit treatment of velocity and achieves third-order accuracy for general dynamic problems. Across a suite of both linear and nonlinear benchmarks, the new algorithms consistently outperform existing single-solve explicit methods in accuracy. Their built-in numerical dissipation effectively filters out spurious high-frequency components, as demonstrated by two wave propagation problems. Finally, when applied to the realistic engineering problem, both of them deliver superior numerical precision at minimal computational cost.
		\end{abstract}

		
		
		\begin{keyword}
			explicit integration \sep single-solve \sep implicit treatment of velocity \sep third-order accuracy \sep nonlinear dynamics
		\end{keyword}
	\end{frontmatter}
	
	\section{Introduction}

	Explicit time integration methods have demonstrated exceptional efficiency in solving nonlinear dynamic problems and have gained widespread recognition in large-scale engineering simulations. Unlike implicit methods, explicit algorithms do not require solving linear equations at each time step for a linear system, making them particularly suitable for handling systems with millions of degrees of freedom and it avoids the complexity of iterative processes for a nonlinear system. This efficiency makes explicit methods the preferred choice for many practical applications, especially when computational resources are limited or large-scale problems need to be solved efficiently.
	
	Over the past decades, various explicit algorithms have been developed to tackle challenges posed by dynamic systems. These methods are typically divided into two categories: linear multi-step methods and single-step methods. While linear multi-step methods~\cite{zhaiTWOSIMPLEFAST1996,yangTwoDynamicExplicit2014,yangImprovedExplicitIntegration2020,liuExplicitIntegrationMethod2023} can provide reasonable accuracy, they are not self-starting and require additional steps or integration techniques to initiate calculations. In single-step methods, they are further classified into single-sub-step methods and multi-sub-step methods based on the number of the equation system to be solved. By using composite sub-step techniques~\cite{tarnowHowRenderSecond1994}, multi-sub-step explicit methods~\cite{nohExplicitTimeIntegration2013,liTwoThirdorderExplicit2022,liEnhancedSecondOrderExplicit2023,liSecondorderSubstepExplicit2023,liSuiteSecondorderComposite2023,liDirectlySelfstartingSecondorder2024,wang_GeneralizedSinglestepMultistage_2025} have achieved significant advancements, offering higher-order accuracy and larger stability domains. However, multi-sub-step methods involve more complex programming, requiring multiple calls to the equilibrium equation within a single time step, which increases the computational cost. Single-sub-step (A.k.a. Single-solve ) explicit algorithms~\cite{chungNewFamilyExplicit1994,liIdenticalSecondOrder2021,zhaoSelfstartingDissipativeAlternative2023,kimSimpleExplicitSingle2019} , due to their programming simplicity and theoretical maturity, remain the most widely used. These methods require only one solution per time step, making them highly efficient for large-scale nonlinear problems. Therefore, the main research objects in this paper also focus on the development of self-starting single-solve explicit algorithms.
	
	The earliest single-solve explicit time integration methods could be traced back to the work of Fox and Goodwin~\cite{foxNewMethodsNumerical1949}. This method necessitates both the calculation of initial acceleration and the additional computation (${\bmu}_{-1}$) of the initial displacement, classifying it as a linear multi-step method rather than a self-starting one. Zhao et al.~\cite{zhaoSelfstartingDissipativeAlternative2023} demonstrated that the central difference (CD) method ~\cite{foxNewMethodsNumerical1949}, half-step central difference method~\cite{cookConceptsApplicationsFinite2007}, and Newmark explicit (NE) method~\cite{newmarkMethodComputationStructural1959} are algebraically identical under fixed time step sizes. These three explicit methods have the same spectral properties, the order of accuracy , the implicit treatment of velocity and stability. The only difference is that NE is self-starting, while the other two are not.
	In general, implicit time integration methods can be paired with corresponding explicit algorithms in the form of predictor-corrector schemes~\cite{hughesImplicitexplicitFiniteElements1978,hughesImplicitexplicitFiniteElements1978a,hilberImprovedNumericalDissipation1977,hughesAnalysisTransientAlgorithms1983,hulbertExplicitTimeIntegration1996}. Hulbert et al.~\cite{hulbertExplicitTimeIntegration1996} analyzed the predictor-corrector scheme of the TPO/G-$\alpha$ method~\cite{shaoThree,chungTimeIntegrationAlgorithm1993} and initially referred to it as the EG-$\alpha$ method. Li et al.~\cite{liIdenticalSecondOrder2021} pointed out that the algorithm~\cite{hulbertExplicitTimeIntegration1996} should actually be the predictor-corrector scheme of the WBZ-$\alpha$ method~\cite{woodAlphaModificationNewmark1980}, rather than TPO/G-$\alpha$. Therefore this algorithm~\cite{hulbertExplicitTimeIntegration1996} is abbreviated as the EWBZ-$\alpha$ method in this paper, while the true predictor-corrector scheme corresponding to the TPO/G-$\alpha$ method is given in the literature~\cite{liIdenticalSecondOrder2021}, referred to as the EG-$\alpha$ method. The CL method~\cite{chungNewFamilyExplicit1994} achieves second-order accuracy in both displacement and velocity, and first-order accuracy in acceleration. Kim~\cite{kimSimpleExplicitSingle2019} developed an enhanced version, the ICL method, which improves the accuracy of the CL method to identical second-order accuracy. Nonetheless, both methods require $1/2\leq\rhob\leq1$, limited in their ability to control numerical dissipation over the full range.
	Subsequently, Li et al.~\cite{liIdenticalSecondOrder2021} proposed an optimal explicit algorithm (GSSE), which achieves the identical second-order accuracy, a full range of dissipation control at the bifurcation point ($0\leq\rhob\leq1$) and the maximal conditionally stable region. Given the widespread use of the CD method but its numerous limitations (non-self-starting and zero dissipation), Zhao et al.~\cite{zhaoSelfstartingDissipativeAlternative2023} further generalized GSSE~\cite{liIdenticalSecondOrder2021}  with the implicit treatment of velocity and proposed the GSSI method. Compared to the CD method, GSSI~\cite{zhaoSelfstartingDissipativeAlternative2023} provides a significantly larger conditional stability domain. It is worth noting that when the parameters are chosen as $\rhob=1$ and $\rho_s=0$, GSSI~\cite{zhaoSelfstartingDissipativeAlternative2023} degenerates into the NE~\cite{newmarkMethodComputationStructural1959} method. The review article \cite{LXJZ20250311001} offers a thorough examination and discussion of self-starting single-solve explicit algorithms, highlighting the GSSE~\cite{liIdenticalSecondOrder2021} and GSSI~\cite{zhaoSelfstartingDissipativeAlternative2023} methods as the optimal methods currently available in this field. The key distinction between the two algorithms lies in their treatment of velocity, either explicitly or implicitly. For a more detailed analysis, readers are referred to the literature~\cite{LXJZ20250311001}.
	
	Usually, self-starting high-order explicit algorithms are often designed in  multi-sub-step schemes. Kim and Reddy~\cite{kimNovelExplicitTime2020} proposed two two-sub-step explicit algorithms (Kim–IJMS3/4), where Kim–IJMS3 and Kim–IJMS4 are capable of achieving third-order accuracy in displacement and velocity, as well as second-order accuracy in acceleration when solving general dynamic problems ($\xi\neq0$). Li et al.~\cite{liTwoThirdorderExplicit2022} introduced two two-sub-step algorithms, referred to as SS2HE\textsubscript{1} and SS2HE\textsubscript{2}, which maintain identical third-order accuracy and achieve controllable numerical dissipation throughout the entire process. Wang et al.~\cite{wang_GeneralizedSinglestepMultistage_2025} introduced two two-sub-step algorithms that also attain third-order accuracy in displacement and velocity but second-order accuracy in acceleration. The first scheme proposed  by Wang et al.~\cite{wang_GeneralizedSinglestepMultistage_2025} exhibits spectral characteristics identical to the previously published SS2HE\textsubscript{2}. 
	Additionally, Liu and Guo~\cite{liu_NovelPredictorCorrector_2021a} proposed a three-sub-step explicit integration algorithm, which achieves identical third-order accuracy when the parameters are set as $\alpha = 1/18$ and $\beta = 1/6$. These advancements collectively establish a consensus that achieving high-order accuracy necessitates the use of multi-sub-step algorithms.
	
	As previously discussed, the currently available single-solve explicit algorithms have achieved identical second-order accuracy at most. Algorithms that attain high-order accuracy are exclusively multi-sub-step methods. None of  self-starting single-solve explicit algorithms have reached high-order accuracy. To address this gap, this paper introduces two novel self-starting single-solve explicit time integration algorithms that achieve third-order accuracy. These algorithms require only a single solution to the equilibrium equation per time step, thus maintaining computational efficiency while enhancing accuracy. The key features of the proposed algorithms include self-starting property, single solution per time step, explicit integration, third-order accuracy, numerical dissipation at the bifurcation point, and no adjustable parameters.
	
	The remainder of this paper is organized as follows: Section \ref{sec:structure} presents the framework of the self-starting single-solve time integration method, which represents any self-starting single-solve time integration algorithm through 11 algorithmic parameters. Section \ref{sec:optimization} focuses on optimizing the parameters to achieve third-order accuracy, discusses stability, and proposes the novel algorithms. Section \ref{sec:properties} analyzes and compares the properties of the new and published algorithms, including spectral characteristics, amplitude and phase errors, and convergence rate tests. Section \ref{sec:examples} provides computational verification of the new algorithms using various tests and compares them with the published algorithms. These tests include nonlinear systems such as the Van der Pol system, spring pendulum system; one-dimensional impact rod and two-dimensional membrane for testing numerical dissipation; as well as a large-scale nonlinear practical engineering problem. Section \ref{sec:conclusion} presents a summary of the new algorithms.
	
	\section{The self-starting single-solve algorithms}
	\label{sec:structure}
	Following spatial discretization (e.g., finite element, isogeometric analysis or virtual element methods), the partial differential equations governing the structural dynamic behavior can be transformed into the semi-discretized equations of motion as shown in \cref{eq1}. It is essentially a second-order ordinary differential equations in terms of the time variable.
	\begin{equation}
		\bm{M\bma}(t) = \bm{f}(\bmv(t),~\bmu(t),~t)\label{eq1}
	\end{equation}
	where $\bm{M}$ denotes the global mass matrix after spatial discretization and $\bm{f}$ represents the total force vector acting on the structure, including both internal force $\bm{p}(\bmu,\bmv)$ and external forces $\bm{q}(t)$, which depend on the solution variables $\bmu(t)$, $\bmv(t)$ and time $t$. The vector $\bmu(t)$ represents the unknown displacement; $\bmv(t)$ and $\bma(t)$ represent the unknown nodal velocity and acceleration vectors, respectively. To ensure a unique solution to \cref{eq1}, initial conditions are provided as ${\bmu}_0 = \bmu(t_0)$ and ${\bmv}_0 = \bmv(t_0)$, where $t_0$ represents the initial time.
	
	It is well-known that directly integrating \cref{eq1} using time integration methods may require additional initial conditions to initiate the numerical integration process. Most of self-starting algorithms require computing the initial acceleration $\bma_0$:
	\begin{equation}\label{eq:2}
		\bma_0  = \bm{M}^{-1}\bm{f}(\bmv_0,~\bmu_0,~t_0).
	\end{equation}
	To solve \cref{eq1}, the following self-starting integration scheme is presented:
	
	\begin{mdframed}
		The acceleration $\bma_{n+p}$ at $t_{n+p}:=t_n+p\dt$ is first calculated by
		\begin{subequations}\label{eqstru}
			\begin{align}
				\bm{M}{{\bma}}_{n+p} & = \bm{f}\left(\bmu_{n+p}, \ \bmv_{n+p}, \ t_{n+p}\right), \label{eqstrunonlinear}                                   \\
				{{\bmu}_{n+p}}       & = {{\bmu}_{n}} + p\dt{{{\bmv}}_{n}} + \dt^{2} ( \alpha_{1}{{\bma}}_{n} + \alpha_{2}{{\bma}}_{n+p} ), \label{eq:pc1} \\
				{{\bmv}}_{n+p}       & = \bmv_{n} + \dt ( \alpha_{3}{{\bma}}_{n} + \alpha_{4}{{\bma}}_{n+p} ), \label{eq:pc2}
			\end{align}
			and solutions at $t_{n+1}$ are updated by
			\begin{align}
				{{\bmu}_{n+1}} & = {{\bmu}_{n}} + \dt {{\bmv}}_{n} + \dt^2 ( \alpha_5 \bma_n + \alpha_6 \bma_{n+p} ), \\
				{{\bmv}}_{n+1} & = {{\bmv}}_{n} + \dt ( \alpha_7 \bma_n + \alpha_8 \bma_{n+p} ),                      \\
				{{\bma}}_{n+1} & = \alpha_9 \bma_n + \alpha_{10} \bma_{n+p},
			\end{align}
		\end{subequations}
		where $p \in \mathbb{R}$ and ${\alpha_{j}} \in \mathbb{R} ~(j = 1,~\cdots,~10)$ are algorithmic parameters to be designed.
	\end{mdframed}
	
	As described in \cref{eqstrunonlinear}, the algorithm framework requires only a single solution to the balance equation, making all methods within it single-solve. Studies~\cite{LXJZ20250311001,ZDGC20250116002,wang_GeneralizedSinglestepMultistage_2025}  have demonstrated that with the proper selection of the 11 parameters (${\alpha_{j}}$ for $j = 1,~\dots,~10$ and $p$), this framework is capable of encompassing nearly all current self-starting time integration schemes. Thus, the Butcher table for defining the self-starting single-solve time integration algorithms is given by:
	\begin{equation}
		\begin{array}{c|cc|cc|cc}
			p & \alpha_{1} & \alpha_{2} & \alpha_{3} & \alpha_{4} &                              \\ \hline
			& \alpha_{5} & \alpha_{6} & \alpha_{7} & \alpha_{8} & \alpha_{9} & \alpha_{10}
		\end{array}
		\label{eq:tabular}
	\end{equation}
	
	Once the Butcher table of the algorithm is provided, one self-starting single-solve algorithm can be determined succinctly. In \ref{app1}, various Butcher tables for some published explicit algorithms are presented. For a more in-depth discussion on the self-starting single-solve Butcher table mentioned above, please refer to the literature~\cite{LXJZ20250311001}.
	
	When the self-starting single-solve framework \eqref{eqstru} is employed to solve linear problems, the force vector simplifies to $\bm{f} = \bm{q}(t) - \bm{C}\bmv(t) - \bm{K}\bmu(t)$ where $\bm{C}$ and $\bm{K}$ are the global damping and stiffness matrices respectively. Substituting \cref{eq:pc1,eq:pc2} into \cref{eqstrunonlinear} yields a linear system of the form $\bm{\widetilde{M}} \, \bma_{n+p} =\bm{\widetilde{f}}$. The effective mass matrix $\bm{\widetilde{M}} = \bm{M} + {\alpha_4} \dt \bm{C} + {\alpha_2} \dt^{2} \bm{K}$ must be considered. Since the solution procedure requires the inversion of $\bm{\widetilde{M}}$, the computational cost of the algorithm is dictated by the composition of the effective mass matrix. Typically, the global damping matrix $\bm{C}$ and the global stiffness matrix $\bm{K}$ are generally non-diagonal, leading to high computational costs when calculating the inverse of $\bm{\widetilde{M}}$. If $\bm{\widetilde{M}}$ does not include $\bm{C}$ and $\bm{K}$, then
	\begin{equation} \label{eq:fullexp}
		\alpha_2 = \alpha_4 = 0.
	\end{equation}
	
	The algorithm shown in \eqref{eqstru} that satisfies \cref{eq:fullexp} is referred to as a self-starting fully explicit time integration algorithm. Such algorithms do not require the Newton-Raphson iterative method for solving any nonlinear problems, significantly reducing computational overhead. This makes explicit algorithms particularly well-suited for solving nonlinear problems.
	
	When $\bma_{n+p}$ is employed exclusively for the prediction of velocity, one should require
	\begin{equation}\label{eq:7}
		\alpha_{2} = 0,\ \alpha_{4} \neq 0.
	\end{equation}
	
	Such algorithms are referred to as explicit algorithms with the implicit treatment of the velocity. It can be seen that when solving linear problems, the algorithm remains explicit only if the mass and damping matrices are diagonal. However, when solving nonlinear damping problems, the Newton iterative method is still required. Examples of such algorithms include the CD and NE~\cite{newmarkMethodComputationStructural1959} methods. Although explicit algorithms with the implicit treatment of velocity require additional computational work when solving damping problems, they offer greater stability and more accurate solutions. Both fully explicit algorithms and explicit algorithms with the implicit treatment of the velocity are the subjects to be developed in this research.
	
	\section{Optimization of parameters}
	\label{sec:optimization}
	
	The standard single-degree-of-freedom (SDOF) modal system is commonly used to analyze numerical characteristics of time integration methods.
	\begin{equation}\label{eq:single}
		\ddot{u}(t) + 2\xi \omega \dot{u}(t) + \omega^2 u(t) = f(t)
	\end{equation}
	with initial conditions $\dot{u}({{t}_0}) = {\dot{u}}_0$ and $u({{t}_0}) = {u}_0$, where $\xi \in [0,\ 1)$ and $\omega \in (0,\ +\infty )$ represent the physical damping ratio and undamped natural frequency, respectively; $f(t)$ is the modal external force; $u(t)$, $\dot{u}(t)$ and $\ddot{u}(t)$ represent displacement, velocity and acceleration, respectively. 
	
	When the algorithm framework~\eqref{eqstru} with the fixed time step $\dt$ is applied to solve the SDOF system \eqref{eq:single}, the following single-step recursive scheme can be obtained as
	\begin{equation}\label{eq:singlestep}
		{{\bm{X}}_{n+1}} = {{\bm{D}}_{\text{num}}} \cdot {{\bm{X}}_n} + {{\bm{L}}_{\text{num}}}({{t}_n})
	\end{equation}
	where  ${\bm{X}}_{n} = \left[u_n\quad  \dot{u}_n \quad \ddot{u}_n  \right]^\mathsf{T}$ and the numerical amplification matrix ${\bm{D}}_{\text{num}}$ is given as
	\begin{equation}\label{amplification:matrix}
		\bm{D_{\text{num}}} = \frac{1}{D}
		\left[
		\begin{array}{ccc}
			\displaystyle 1 + (\alpha_{2} - \alpha_{6}) \Omega^{2} + 2 \Omega \xi \alpha_{4} & \displaystyle  \dt D_{12}                             & \displaystyle \dt^2 D_{13} \\[1ex]
			\displaystyle  - \Omega \omega \alpha_{8}                                        & \displaystyle D_{22}                                  & \displaystyle \dt D_{23}   \\[1ex]
			\displaystyle - \omega^{2} \alpha_{10}                                           & \displaystyle - \omega \alpha_{10} (\Omega p + 2 \xi) & \displaystyle D_{33}
		\end{array}
		\right]
	\end{equation}
	where $\Omega := \omega \dt$, $D   =  \alpha_{2}\Omega^{2} + 2\alpha_{4}\xi  \Omega  + 1 $ and
	\begin{equation}\label{amplification:coefficient}
		\begin{aligned}
			D_{12} & = (\alpha_{2} - p \alpha_{8}) \Omega^{2} + 2  (\alpha_{4} - \alpha_{8})\xi\Omega  + 1
			& D_{13}                                                                                                                                       & =  (\alpha_{2} \alpha_{5} - \alpha_{1} \alpha_{6} ) \Omega^{2} + 2  (\alpha_{4} \alpha_{5} - \alpha_{3} \alpha_{6})\xi \Omega + \alpha_{5}   \\
			D_{22} & = (\alpha_{2} - p \alpha_{6}) \Omega^{2} + 2  (\alpha_{4} - \alpha_{6})\xi\Omega  + 1
			& D_{23}                                                                                                                                       & =    (\alpha_{2} \alpha_{7} - \alpha_{1} \alpha_{8} ) \Omega^{2} + 2  ( \alpha_{4} \alpha_{7}- \alpha_{3} \alpha_{8})\xi \Omega + \alpha_{7} \\
			D_{33} & = ( \alpha_{2} \alpha_{9}- \alpha_{1} \alpha_{10} ) \Omega^{2} - 2  (\alpha_{10} \alpha_{3} - \alpha_{4} \alpha_{9})\xi\Omega  + \alpha_{9}.
		\end{aligned}
	\end{equation}
	The numerical load operator ${\bm{L}_{\text{num}}}({{t}_{n}})$ as defined in \cref{eq:singlestep} is expressed as
	\begin{equation}
		\label{externalforce}
		\bm{L_{\text{num}}}(t_n) = \frac{f(t_{n+p})}{D}
		\begin{bmatrix}
			\displaystyle \dt^{2} \alpha_{6} &
			\displaystyle \dt \alpha_{8}     &
			\displaystyle \alpha_{10}
		\end{bmatrix}^\mathsf{T}.
	\end{equation}
	
	The characteristic polynomial of $\bm{D}_{\text{num}}$ is computed as 
	\begin{equation}\label{eq:lambda}
		\det( \lambda {\bm{I}}_{3} - {\bm{D}}_{\text{num}} ) = \sum_{j=0}^{3} A_{j} \lambda^{j} = 0
	\end{equation}
	where ${\bm{I}}_3$ is the identity matrix of dimension $3$, and the coefficients $A_j$ for $j = 0, 1, 2, 3$ are outlined as
	\begin{subequations}\label{eigenvalues}
		\begin{align}
			A_3 & = 1                                                                                                                                                                                                                                                                                                                                                   \\
			A_2 & = \frac{\left[  p \alpha_{8} + \alpha_{1} \alpha_{10} + \alpha_{6}- (\alpha_{9}+2) \alpha_{2}  \right] \Omega^{2}
				- 2  \left[ (\alpha_{9}+2) \alpha_{4} - \alpha_{10} \alpha_{3} - \alpha_{8} \right]\xi \Omega - \alpha_{9} - 2 }{\alpha_{2}\Omega^{2} + 2\alpha_{4}\xi  \Omega  + 1}                                                                                                                                                                                        \\
			A_1 & = \frac{\left\{\begin{array}{l}
					\left[ (2 \alpha_{2} - p \alpha_{8} - \alpha_{6}) \alpha_{9} + (\alpha_{7} p - 2 \alpha_{1} + \alpha_{5}) \alpha_{10}  + (1 - p) \alpha_{8} + \alpha_{2} - \alpha_{6} \right] \Omega^{2} \\
					+ 2  \left[ (2 \alpha_{4} - \alpha_{8}) \alpha_{9} + (\alpha_{7} - 2 \alpha_{3}) \alpha_{10} - \alpha_{8} + \alpha_{4} \right]\xi \Omega + 2 \alpha_{9} + 1
				\end{array}\right\}}{\alpha_{2}\Omega^{2} + 2\alpha_{4}\xi  \Omega  + 1} \\
			A_0 & = \frac{\left\{
				\begin{array}{l}
					\big( \left[ (p - 1) \alpha_{8} - \alpha_{2} + \alpha_{6} \right] \alpha_{9} - \left[ (p - 1) \alpha_{7} - \alpha_{1} + \alpha_{5} \right] \alpha_{10} \big) \Omega^{2} \\
					- 2 \xi \left[ (\alpha_{4} - \alpha_{8}) \alpha_{9} + \alpha_{10} (\alpha_{7} - \alpha_{3}) \right] \Omega - \alpha_{9}
				\end{array}
				\right\}}{\alpha_{2}\Omega^{2} + 2\alpha_{4}\xi  \Omega  + 1}.
		\end{align}
	\end{subequations}

	The accuracy analysis often neglects the influence of external forces (assuming $f(t) \equiv 0$). In prior work~\cite{liDesigningDevelopingSingle2023}, based on the theory of linear multistep methods~\cite{hughesFiniteElementMethod2012}, a general formula to analyze the order of accuracy for any time integration algorithm was proposed, considering the impact of external forces on accuracy.
	
	With a fixed time step, the single-step recursive scheme \eqref{eq:singlestep} can equivalently be expressed in linear multistep form as
	\begin{equation}\label{eq:multisteps}
		\sum_{j=0}^3 A_j {\bm{X}}_{n+j} = \sum_{j=0}^3 A_j \sum_{i=0}^{j-1} {\bm{D}}_{\text{num}}^i \cdot {\bm{L}}_{\text{num}}(t_{n+j-i-1}).
	\end{equation}
	
	\begin{defi}
		For solving the standard SDOF system, the local truncation errors of the linear multistep format \eqref{eq:multisteps} is defined as
		\begin{equation}\label{eq:accuracy}
			\bm{\tau} = \frac{1}{\dt^2} \left( \sum_{i=0}^{3} A_i \bm{X}(t_{n+i}) - \sum_{i=0}^{3} A_i \sum_{j=0}^{i-1} {\bm{D}}_{\mathrm{num}}^j {\bm{L}}_{\mathrm{num}}(t_{n-j+i-1}) \right)
		\end{equation}
		where $\bm{X}(t_{n+i})$ corresponds to the exact quantity of the numerical iteration vector. $\bm{\tau}=\left[\tau_u \quad \tau_{\dot{u}} \quad \tau_{\ddot{u}}\right]^\mathsf{T}$ defines the local truncation errors for displacement, velocity, and acceleration. If $\tau_{\square} = \mathcal{O}(\dt^r)$, the method exhibits $r$-th-order accuracy for that variable. Similarly, if $\bm{\tau} = \mathcal{O}(\dt^r)$, the corresponding method achieves identical $r$-th-order accuracy.
	\end{defi}
	
	Assuming that the external force $f(t) \in C^\infty$, ${\bm{L}}_{\mathrm{num}}(t_{n+i})$ are expanded using a Taylor series around $t = t_n$. Substituting Eqs.~\eqref{amplification:matrix}-\eqref{externalforce} and \eqref{eigenvalues} into \cref{eq:accuracy} yields
	\begin{equation}
		\begin{aligned}
			\tau_u & = \ddot{u}(t_n) \left( 1 - \alpha_7 \alpha_{10} + \alpha_8 \alpha_9 - \alpha_8 - \alpha_9 \right) \\
			& + \left\{
			\begin{array}{l}
				\Big[ \left( 2 \alpha_7 \alpha_4 + 2 \alpha_3 + \alpha_7 \right) \alpha_{10}
				+ \left[ 2 \left( 1 - \alpha_9 \right) \alpha_4 - \alpha_9 + 3 \right] \alpha_8 + 2 \alpha_9 - 4 \Big] \xi \omega \ddot{u}(t_n) \\
				+ \Big[ \left( p \alpha_9 - p - 1 \right) \alpha_8 + \left( \alpha_6 - 1 \right) \alpha_9 - \left( p \alpha_7 + \alpha_5 \right) \alpha_{10} - \alpha_6 + 2 \Big]
				\left( \dot{f}(t_n) - \omega^2 \dot{u}(t_n) \right)
			\end{array}
			\right\} \dt + \mathcal{O}(\dt^2).
		\end{aligned}
	\end{equation}
	For $\tau_{u}=\mathcal{O}(\dt^2)$, it is necessary to meet the following conditions:
	\begin{subequations}\label{eq:17}
		\begin{align}
			1 - \alpha_7 \alpha_{10} + \alpha_8 \alpha_9 - \alpha_8 - \alpha_9 &= 0 \\
			\left( 2 \alpha_7 \alpha_4 + 2 \alpha_3 + \alpha_7 \right) \alpha_{10} + \left[ 2 \left( 1 - \alpha_9 \right) \alpha_4 - \alpha_9 + 3 \right] \alpha_8 + 2 \alpha_9 - 4 &= 0 \\
			\left( p \alpha_9 - p - 1 \right) \alpha_8 + \left( \alpha_6 - 1 \right) \alpha_9 - \left( p \alpha_7 + \alpha_5 \right) \alpha_{10} - \alpha_6 + 2 &= 0.
		\end{align}
	\end{subequations}
	After achieving second-order accuracy for displacement, the local truncation errors for velocity can be simplified as
	\begin{align}
		\tau_{\dot{u}} = \left[ \left( 2p - 1 \right) \alpha_9 - 2p - 2 \alpha_8 + 3 \right] \frac{\ddot{f}(t_n)}{2} \dt + \mathcal{O}(\dt^2).
	\end{align}
	The following equation is required to ensure $\tau_{\dot{u}}=\mathcal{O}(\dt^2)$:
	\begin{equation}\label{eq:2accuracy2}
		(2p-1)\alpha_9-2p-x\alpha_8+3=0.
	\end{equation}
	Once \cref{eq:17,eq:2accuracy2} are satisfied, the local truncation error for acceleration is calculated as 
	\begin{equation}\label{eq:200}
		\begin{aligned}
			\tau_{\ddot{u}}  = \left( 1 - \alpha_{10} - \alpha_9 \right) \ddot{f}(t_n)
			+ \Big( \left[ 2 - \alpha_9 - (p + 1) \alpha_{10} \right] \dddot{f}(t_n) + 2 \alpha_4 \left( \alpha_{10} + \alpha_9 - 1 \right) \ddot{f}(t_n) \omega \xi \Big) \dt + \mathcal{O}(\dt^2).
		\end{aligned}
	\end{equation}
	
	Upon simplifying \cref{eq:200} by using \cref{eq:17,eq:2accuracy2} and setting $\tau_{\ddot{u}} = \mathcal{O}(\dt^2)$, the conditions for identical second-order accuracy are summarized as
	\begin{align}\label{eq:2accuracy3}
		\alpha_3 + \alpha_4=p \qquad	\alpha_5 + \alpha_6=\frac{1}{2} \qquad \alpha_7+\alpha_8=1 \qquad \alpha_9+\alpha_{10}=1 \qquad \alpha_8 = \frac{1}{2p}   \qquad \alpha_{10} = \frac{1}{p}.
	\end{align}
	
	At this stage, identical second-order accuracy has been successfully attained. After achieving consistency (at least first-order accuracy), the time integration method should be stable. The spectral radius is commonly used to describe the spectral stability of an algorithm. An algorithm is spectrally stable if
	\begin{equation}\label{eq:stab}
		\rho := \max (|\lambda_j|,\ j=1,\ 2,\ 3) \leq 1
	\end{equation}
	where $\rho$ is the spectral radius and $\lambda_j$ are the eigenvalues of the amplification matrix $\bm{D_{\text{num}}}$. It is well known that explicit time integration methods are generally conditionally stable. Directly deriving the conditionally stable domain for explicit algorithms through \cref{eq:lambda} is tedious and difficult. The Routh-Hurwitz criterion~\cite{lambertComputationalMethodsOrdinary1973} provides a method for deriving the conditionally stable domain. For an  amplification matrix $\bm{D_{\text{num}}}$ of size $s = 3$, the inequality constraints of the conditionally stable domain can be expressed as
	\begin{subequations}\label{eq:staba}
		\begin{align}
			B_0 & = A_0 + A_1 + A_2 + A_3          \ge 0 \\
			B_1 & = -3A_0 - A_1 + A_2 + 3A_3       \ge 0 \\
			B_2 & = 3A_0 - A_1 - A_2 + A_3         \ge 0 \\
			B_3 & = -A_0 + A_1 - A_2 + A_3         \ge 0 \\
			B_4 & = B_1 \cdot B_2 - B_0 \cdot B_2  \ge 0.
		\end{align}
	\end{subequations}
where $A_i$ ($i=0,\dots,3$) are the principal invariants of the amplification matrix $\bm{D}_{\text{num}}$ as shown in \cref{eigenvalues}.

\begin{theorem} \label{th:1}
	A self‐starting time integration algorithm which satisfies fully explicit formulation and single‐solve per time step, achieves at most identical 2nd‐order accuracy when $\xi \neq 0$.
\end{theorem}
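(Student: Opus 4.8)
The plan is to push the truncation‑error expansion one order beyond the identical second‑order level already reached in \cref{eq:2accuracy3}, and to exhibit a single $\xi$‑proportional coefficient at order $\dt^2$ that the fully explicit constraint of \cref{eq:fullexp} makes impossible to annihilate. First I would impose all of \cref{eq:2accuracy3} together with \cref{eq:fullexp}. Under these, $\alpha_3=p$, the denominator $D$ in \cref{amplification:matrix} collapses to $D=1$, and the surviving free parameters are $p\neq0$ (forced nonzero by $\alpha_8=1/(2p)$ and $\alpha_{10}=1/p$), $\alpha_1$, and $\alpha_6$, with $\alpha_5=\tfrac12-\alpha_6$, $\alpha_7=1-\tfrac1{2p}$, $\alpha_9=1-\tfrac1p$.

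The structural heart of the argument is the predictor for velocity. With $\alpha_4=0$, \cref{eq:pc2} reduces to $\dot u_{n+p}=\dot u_n+p\dt\,\ddot u_n$, which carries no $\dt^2$ term; hence it reproduces the exact $\dot u(t_{n+p})$ only to first order, leaving an irreducible defect $\dot u_{n+p}-\dot u(t_{n+p})=-\tfrac{p^2}{2}\dt^2\,\dddot u(t_n)+\mathcal{O}(\dt^3)$. By contrast the displacement predictor \cref{eq:pc1} with $\alpha_2=0$ still retains the free coefficient $\alpha_1$, so its defect can be forced to $\mathcal{O}(\dt^3)$ by taking $\alpha_1=p^2/2$. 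Substituting both predictors into the force evaluation $\ddot u_{n+p}=f_{n+p}-2\xi\omega\dot u_{n+p}-\omega^2u_{n+p}$ of \cref{eqstrunonlinear}, the velocity defect is transmitted through the damping term, so the predicted acceleration inherits the error $\ddot u_{n+p}-\ddot u(t_{n+p})=\xi\omega p^2\dt^2\,\dddot u(t_n)+\mathcal{O}(\dt^3)$. This leading part is proportional to $\xi$ and, since $\omega>0$ and $p\neq0$, is nonzero precisely when $\xi\neq0$; crucially it contains no remaining free parameter.

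It then remains to feed this $\mathcal{O}(\dt^2)$ acceleration defect through the update formulas and into the truncation‑error vector $\bm{\tau}$. Because the second‑order conditions fix $\alpha_8=1/(2p)\neq0$, the velocity update injects an irreducible $\xi$‑proportional term $\tfrac{p}{2}\xi\omega\,\dddot u(t_n)\,\dt^3$ into the one‑step velocity error, i.e.\ a nonvanishing $\xi$‑linear contribution to the $\dt^2$ coefficient of $\tau_{\dot u}$ in the normalization of \cref{eq:accuracy}. No admissible choice of $p,\alpha_1,\alpha_6$ supplies a compensating $\xi\omega$ term, so this coefficient cannot be set to zero and $\tau_{\dot u}\neq\mathcal{O}(\dt^3)$. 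Since identical second order is already attainable (for instance $\alpha_1=p^2/2$ together with \cref{eq:2accuracy3}), the maximal identical order is exactly two, which proves the claim. The same mechanism explains why the obstruction disappears for $\xi=0$ and why permitting $\alpha_4\neq0$ (the implicit treatment of velocity) removes it: the predictor then contains $\alpha_4\ddot u_{n+p}$, which can restore the missing $\dt^2$ term.

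The step I expect to be the main obstacle is this last one: rigorously tracking the $\xi$‑linear part of the $\dt^2$ coefficient of $\bm{\tau}$ through the multistep reformulation \cref{eq:multisteps}, and verifying that the neglected cross‑terms (the $\dt^3$ displacement‑predictor defect, the $\alpha_5$‑ and $\alpha_6$‑weighted feedbacks, and the contribution of $\ddot u_{n+p}$ to $\ddot u_{n+1}$ through $\alpha_{10}$) do not conspire with the free parameters to cancel it. Concretely I must show that, after imposing \cref{eq:2accuracy3,eq:fullexp}, the $\xi$‑linear part of that coefficient equals a fixed nonzero multiple of $\omega\,\dddot u(t_n)$ independent of $p,\alpha_1,\alpha_6$; this is the one place where a full order‑$\dt^2$ Taylor expansion, rather than the transparent prediction‑error count sketched above, is genuinely required.
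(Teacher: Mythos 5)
Your structural intuition---that the root cause is the $\mathcal{O}(\dt^2)$ defect of the velocity predictor $\dot u_{n+p}=\dot u_n+p\dt\,\ddot u_n$ entering the equilibrium equation through the damping term, and that either $\xi=0$ or $\alpha_4\neq0$ removes it---is sound and agrees with the paper. But the step you yourself flag as the main obstacle is where the argument genuinely fails: the $\xi$-linear contribution to the $\dt^2$ coefficient of the truncation error is \emph{not} a fixed nonzero multiple of $\omega\dddot u(t_n)$ independent of $p,\alpha_1,\alpha_6$, and it does not survive in $\tau_{\dot u}$. When the full expansion is carried out (the paper's \cref{eq:24}), the $\xi$-proportional term lands in the \emph{displacement} error with coefficient $(6p-1)\,\omega\xi\,\dddot u(t_n)/(6p)$; the cancellations you worried about (from rewriting $\ddddot u=\ddot f-2\xi\omega\dddot u-\omega^2\ddot u$ and from the multistep combination weighted by the $A_j$) do occur and leave a $p$-dependent coefficient that vanishes at $p=1/6$. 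Consistently with this, the paper's recomputed velocity error after imposing the displacement conditions is $\tau_{\dot u}=-\dddot f(t_n)\,\dt^2/12+\mathcal{O}(\dt^3)$: it contains no irreducible $\xi$-linear term at all. The surviving obstruction is a forcing-derivative term whose general coefficient is $-(6p^2-6p+1)/(12p)$ (compare \cref{eq:27,eq:33}); it is not proportional to $\xi$ and is itself cancellable by choosing $p=(3\pm\sqrt3)/6$ --- just not simultaneously with $p=1/6$.

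So the correct mechanism is an incompatibility of parameter values rather than a parameter-independent coefficient: for $\xi\neq0$, third-order displacement forces $p=1/6$, $\alpha_1=1/72$, $\alpha_6=11/12$ (\cref{eq:37}), while third-order velocity requires $6p^2-6p+1=0$; no single $p$ satisfies both, hence identical third order is unattainable. (The paper additionally shows via the Routh--Hurwitz conditions in \cref{eq:66} that the $p=1/6$ member is unconditionally unstable, a point your argument does not address.) To repair your proof you would need to abandon the claim that the obstruction sits in $\tau_{\dot u}$ as an uncancellable $\xi$-term, and instead show that cancelling the $\xi$-term in $\tau_u$ consumes the parameter $p$ that the velocity order condition also needs.
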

\begin{proof}
	For the third-order accuracy, reconsider the local truncation error for displacement using \cref{eq:fullexp}:	
\begin{equation}
	\begin{aligned}\label{eq:24}
		\tau_u  =                                                                 
		  \frac{2\left( 6p - 1 \right)  \omega\xi \dddot{u}(t_n) + \left( 12p \alpha_6 - 6p + 12 \alpha_1 - 1 \right) \omega^2 \ddot{u}(t_n) + \left( 1 - 6p^2 - 12p \alpha_6 + 6p \right) \ddot{f}(t_n)}{12p} \dt^2 + \mathcal{O}(\dt^3).
	\end{aligned}
\end{equation}
In order to achieve third-order accuracy in displacement, the following conditions must be satisfied:
\begin{subequations}\label{eq:37}
	\begin{align}
		6p - 1 &= 0 \\
		1 - 6p^2 - 12p \alpha_6 + 6p &= 0  \\
		12p \alpha_6 - 6p + 12 \alpha_1 - 1 &= 0.
	\end{align}
\end{subequations}
Solving these equations results in
\begin{align}\label{eq:38}
\alpha_1 = \frac{1}{72},	 \quad  \alpha_6 = \frac{11}{12}, \quad p = \frac{1}{6}.
\end{align}
Once third-order accuracy for displacement is satisfied, velocity and acceleration response which are critical aspects of dynamic problems, should be examined. Recalculating the truncation errors for velocity and acceleration
\begin{align}
	\tau_{\dot{u}}=-\frac{\dddot{f}(t_n)}{12}\dt^2+\mathcal{O}(\dt^3),\qquad \tau_{\ddot{u}}=\frac{5\ddddot{f}(t_n)}{12}\dt^2+\mathcal{O}(\dt^3),
\end{align}
 shows that these variables remain second-order accurate, and third-order accuracy is not attainable.

The coefficients $ B_j \ (j = 0, \dots, 4)$ in \cref{eq:staba} are provided as
\begin{equation}\label{eq:66}
	\begin{alignedat}{3}
	&B_4  = 4 \Omega^{4}+16 \xi\Omega^{3}  -384\xi^{2} \Omega^{2} +576\xi \Omega   \geq 0 , &B_3  = 2 \Omega^{2}-8\xi \Omega   -16\geq 0 ,   \\
	&B_2 = -4 \Omega^{2}-16 \xi\Omega   +24 \geq 0    ,   \qquad \qquad
	B_1  = -4 \Omega^{2}+24 \xi\Omega    \geq 0,  &B_0  =  6 \Omega^{2}\geq 0  .
\end{alignedat}
\end{equation}
where $ \Omega\in(0,\infty) $ and $ \xi\in[0,1) $, the conditions 
\begin{subequations}
	\begin{align}
&B_3 \ge 0 \quad\Longrightarrow\quad \Omega \in \bigl[2\sqrt{\xi^2+2} + 2\xi,\;\infty\bigr), \\
&B_2 \ge 0 \quad\Longrightarrow\quad \Omega \in \bigl(0,\;\sqrt{4\xi^2+6} - 2\xi\bigr].
	\end{align}
\end{subequations}
Hence, one requires
\begin{equation}
	\begin{cases}
		2\sqrt{\xi^2+2} + 2\xi \le \sqrt{4\xi^2+6} - 2\xi,\\
		0 \le \xi < 1
	\end{cases}
	\Longrightarrow \quad \xi\in\varnothing.
\end{equation}
Therefore, there do not exist $ \Omega\in(0,\infty) $ and $ \xi\in[0,1) $ that satisfy the inequalities~\eqref{eq:66}.
\end{proof}
In summary, the integration framework \cref{eqstru} is unconditionally unstable for solving damping problems after achieving higher order accuracy in displacement.
	
	 \subsection{Fully explicit algorithms without physical damping}
	
	Based on the Theorem~\ref{th:1}, it is evident that for generalized dynamic problems ($\xi\neq 0$), the fully explicit algorithm can not achieve high-orger accuracy. Therefore, the undamped problem ($\xi=0$) is considered. Substituting $\xi = 0$ into \cref{eq:24} and setting $\tau_{u} = \mathcal{O}(\dt^3)$ gives
	\begin{align}\label{eq:39}
		\alpha_6 &= -\frac{1 - 6p^2 + 6p}{12p}, \quad \alpha_1 = \frac{1}{2} p^2.
	\end{align}
	Taking into account third-order accuracy for velocity as described in \cref{eq:accuracy}, $\tau_{\dot{u}}$ is calculated as
	\begin{align}\label{eq:27}
		\tau_{\dot{u}} = -\frac{(6p^2 - 6p + 1) \dddot{f}(t_n)}{12p} \dt^2 + \mathcal{O}(\dt^3).
	\end{align}
	By setting $\tau_{\dot{u}} = \mathcal{O}(\dt^3)$, the parameter $p$ can be derived as
	\begin{align}\label{eq:28}
		p = \frac{1}{2} \pm \frac{\sqrt{3}}{6}.
	\end{align}
	
	 Under these conditions, the parameters of the fully explicit algorithm, which achieves third-order accuracy in both displacement and velocity, are fully determined in the absence of physical damping ($\xi=0$). The coefficients $ B_j \ (j = 0, \dots, 4)$ are calculated as 
	\begin{equation}
		\begin{aligned}\label{eq:6}
			&B_4  = \frac{2 \Omega^{4}}{3 p}\geq0,  \qquad
			&B_3  =& \frac{{2\left( {1 - 3p} \right){\Omega ^2} + 24p - 12}}{{3p}}\geq0, \qquad
			&B_2  =& \frac{{12 - 2{\Omega ^2}}}{{3p}}\geq0,         \\
			&B_1  = \frac{\left(2 p-1\right) \Omega^{2}}{p}\geq0,         \qquad
			&B_0  =&  \frac{\Omega^{2}}{p}\geq0.\qquad & &
		\end{aligned}
	\end{equation}
	
According to $B_1 \ge 0$, the negative sign in \cref{eq:28} should be discarded. When $p = (3+\sqrt{3})/{6}$, the conditionally stable domain is determined as $\Omega \in \left(0,\;\sqrt{6-2\sqrt{3}}\right]$. Therefore, a self-starting single-solve fully explicit algorithm with third-order ($\xi=0$) is as 
	
	\begin{mdframed}[innertopmargin=8pt]
		\noindent\textbf{Algorithm 1}  \refstepcounter{alg}\label{1} \\
		\noindent\rule{\linewidth}{0.4pt}
		\begin{subequations}
			\noindent\textbf{Solve} $\bma_{n+p}$ by using:
			\begin{align}
				\bm{M} \bma_{n+p}& = \bm{f}(\bmv_{n+p},~\bmu_{n+p},~t_{n+p})\\
				\bmv_{n+p} & = \bmv_n + p\dt   \bma_n\\
				\bmu_{n+p} & = \bmu_n + p\dt \bmv_n + \frac{p^2}{2} \dt^2 \bma_n 
			\end{align}
			\noindent then, \textbf{update} solutions at $t_{n+1}$ with
			\begin{align}
				\bmu_{n+1} & = \bmu_n + \dt \bmv_n + \dt^2 \left( \frac{6p^2 - 1}{12p} \bma_n + \frac{-6p^2 + 6p + 1}{12p} \bma_{n+p} \right) \\
				\bmv_{n+1} & = \bmv_n + \dt \left( \frac{2p - 1}{2p} \bma_n + \frac{1}{2p} \bma_{n+p} \right)                                 \\
				\bma_{n+1} & = \frac{p - 1}{p} \bma_n + \frac{1}{p} \bma_{n+p}
			\end{align}
			where $p=\left(3+\sqrt{3}\right)/6$. 
		\end{subequations}
	\end{mdframed}

 \subsection{Explicit algorithms with the implicit treatment of velocity}
	
	The self-starting single-solve explicit algorithm with the implicit treatment of velocity is considered, which achieves identical second-order accuracy. By substituting the parameters shown in \cref{eq:7,eq:2accuracy3} into  \cref{eq:accuracy} for general dynamics problems ($\xi\neq0$), the local truncation error for displacement can be derived as
	\begin{equation}
		\begin{aligned}
			\tau_u =\Big[
			\left( 24p\alpha_4 - 12p + 2 \right) \xi \omega \dddot{u}(t_n)  &+( 12p\alpha_6 - 6p  + 12\alpha_1 - 1 )\omega^2 \ddot{u}(t_n)                   
			\\ &+ \left( 1 - 6p^2 + 6p - 12\alpha_6 p \right)\ddot{f}(t_n)
			\Big] \cdot \frac{\dt^2}{12p} 
			+ \mathcal{O}(\dt^3).
		\end{aligned}
	\end{equation}
	To achieve ${\tau}_u = \mathcal{O}(\dt^3)$, the following results are obtained:
	\begin{align}\label{eq:31}
		\alpha_6 = - \frac{6p^2 - 6p - 1}{12p}, \qquad
		\alpha_4 = \frac{6p - 1}{12p},  \qquad
		\alpha_1 = \frac{p^2}{2}.
	\end{align}
	Likewise, the truncation errors for velocity is accounted for
	\begin{align}\label{eq:33}
		\tau_{\dot{u}} = -\frac{(6p^2 - 6p + 1) \dddot{f}(t_n)}{12p} \dt^2 + \mathcal{O}(\dt^3).
	\end{align}
	
	It can be observed that the form of \cref{eq:33} is identical to that of \cref{eq:27}. In order to  $\tau_{\dot{u}} = \mathcal{O}(\dt^3)$, \cref{eq:28} can be similarly obtained. However, while \cref{eq:27} was derived under the condition $\xi = 0$ for a fully explicit algorithm, \cref{eq:33} was established without imposing this constraint. The difference highlights that algorithms employing implicit treatment of velocity yield more accurate solutions for damped problems.

		For the explicit algorithm with the implicit treatment of velocity,  the coefficients $ B_j \ (j = 0, \dots, 4)$ are calculated as 
	\begin{equation}
			\begin{alignedat}{3}\label{eq:40}
			&B_4  = \frac{24 \Omega^{3} p+1576 \left(2p-1\right) \xi^{2} \Omega +576 \xi }{\left[  \left(6p-1\right)\xi \Omega +6p\right]^{2}} \geq0,  \qquad\qquad\qquad\ \ 
			B_3  = \frac{4 4\left(1-3 p\right) \Omega^{2}-6 \Omega  \xi +48 p-24}{  \left(6 p-1\right)\xi \Omega +6 p}   \geq0,&                                          \\
			&B_2  =  \frac{-4 \Omega^{2}+24 \left(2 p-1\right) \xi  \Omega+24}{  \left(6 p-1\right)\xi \Omega +6 p}\geq0 , \quad
			B_1  = \frac{\left(12 p-6\right) \Omega^{2}+24\xi \Omega }{  \left(6 p-1\right)\xi \Omega +6 p} \geq0,\quad
			B_0  = \frac{6 \Omega^{2}}{  \left(6 p-1\right)\xi \Omega +6 p}\geq0.&
		\end{alignedat}
	\end{equation}
	where the parameter $p$ is defined in \cref{eq:28}. Similarly, choosing the negative sign in \cref{eq:28} makes \cref{eq:40} invalid. Therefore, once $p = (3+\sqrt{3})/{6}$ is determined, the conditionally stable domain. can be derived as follows:
	\begin{equation}\label{eq:399}
		\omega_\text{max} \dt \leq \Omega_s = \left( \sqrt{3+\sqrt{3}+\xi^{2}}-\xi\right) \left(\sqrt{3}-1\right).
	\end{equation}
	The term \( \Omega_s \) represents the maximum stability domain, while \( \omega_\text{max} \) denotes the highest natural frequency of the system under consideration.
	A self-starting single-solve third-order explicit algorithm with the implicit treatment of velocity is 
	\begin{mdframed}[innertopmargin=8pt]
		\noindent\textbf{Algorithm 2}  \refstepcounter{alg}\label{2} \\
		\noindent\rule{\linewidth}{0.4pt}
		\begin{subequations}
			\noindent\textbf{Solve} $\bma_{n+p}$ by using:
			\begin{align}
				\bm{M} \bma_{n+p} & = \bm{f}(\bmv_{n+p}, \bmu_{n+p}, t_{n+p})\\     
				\bmv_{n+p} & = \bmv_n + \dt \left( \frac{12p^2 - 6p + 1}{12p} \bma_n + {\frac{6p - 1}{12p} \bma_{n+p}} \right) \label{eq:43c}\\ 
				\bmu_{n+p} & = \bmu_n + p\dt\, \bmv_n + \frac{p^2}{2} \dt^2 \bma_n
			\end{align}
			\noindent then, \textbf{update} solutions at $t_{n+1}$ with
			\begin{align}
				\bmu_{n+1} & = \bmu_n + \dt \bmv_n + \dt^2 \left( \frac{6p^2 - 1}{12p} \bma_n + \frac{-6p^2 + 6p + 1}{12p} \bma_{n+p} \right) \\
				\bmv_{n+1} & = \bmv_n + \dt \left( \frac{2p - 1}{2p} \bma_n + \frac{1}{2p} \bma_{n+p} \right)                                 \\[4pt]
				\bma_{n+1} & = \frac{p - 1}{p} \bma_n + \frac{1}{p} \bma_{n+p}
			\end{align}
			\noindent where $p=\left(3+\sqrt{3}\right)/6$.
		\end{subequations}
	\end{mdframed}
	
	It is observed from \novelalgref{2} that when the force vector $\bm{f} = \bm{f}(\bmu, t)$ does not involve velocity, \cref{eq:43c} is not invoked, and the scheme becomes fully explicit. In this case, it is observed that \novelalgref{1} and \novelalgref{2} are completely identical. On the other hand, \novelalgref{1} achieves third-order accuracy only when solving undamped problems, while it remains identical second-order accurate in all other cases.
	 Therefore, \novelalgref{1} can be regarded as a special case of \novelalgref{2}. In summary, \novelalgref{2} degenerates into \novelalgref{1} when applied to velocity-independent equations (i.e., $\xi = 0$), whereas it consistently maintains third-order accuracy in general scenarios ($\xi\neq0$). Hence, for enhanced accuracy, \novelalgref{1} is not recommended for problems involving velocity.
	
	Building on the preceding analysis, this paper highlights that within the class of self-starting single-solve explicit time integration algorithms, identical third-order accuracy cannot be realized. There exists exactly one fully explicit algorithm that achieves third-order accuracy ($\xi = 0$ for both displacement and velocity), namely \novelalgref{1}.
	There exists a unique explicit algorithm with the implicit treatment of velocity that achieves third-order accuracy ($\xi \neq 0$ for both displacement and velocity), denoted by \novelalgref{2}.

	\novelalgref{2} still requires solving nonlinear systems of equations when dealing with nonlinear damping problems, and it is not explicit in this case. Code~\ref{code:1}  presents the pseudocode for \novelalgref{2} when solving the nonlinear damping problem in \cref{eq1}. In the Newton-Raphson iteration scheme, the tangent damping matrix is calculated as
	\begin{equation}
		\bm{C}^{(i)}=\left. -\frac{\partial\bm{f}(\bmv(t),\bmu(t), t)}{\partial \bmv(t)}\right|_{\bmv(t)=\bmv_{n+p}^{(i)},\ \bmu(t)=\bmu_{n+p}}.
	\end{equation}
	
	\begin{algorithm}[h]
		\caption{\novelalgref{2} for solving nonlinear damping problems.}\label{code:1}
		\textbf{Solve}: the initial acceleration $\bma_0$ using the  equilibrium equation at $t_0$.\\
		\textbf{Select}: the time step $\dt$, convergence tolerance $\epsilon$, and maximum number of iterations \texttt{max-iter}.\\
		
		\For{$n = 0$ \KwTo $N$}{
			\textbf{Compute}: $\bmu_{n+p} \leftarrow \bmu_n + p \dt \bmv_n + \dfrac{p^2}{2} \dt^2 \bma_n$.\\
			\textbf{Predict}: $\bma_{n+p}^{(1)} \leftarrow \bma_n$.\\
			
			\texttt{\textcolor{gray}{//========== nonlinear iterative procedure ==========}}
			
			\For{$i = 1$ \KwTo \normalfont{\texttt{max-iter}}}{
				\textbf{Compute}: $\bmv_{n+p}^{(i)} = \bmv_n +  \dt\left(\dfrac{12 p^2 -6 p+1}{12p}\bma_n + \dfrac{6p-1}{12p}\bma^{(i)}_{n+p}\right)$.\\
				
				\textbf{Compute}: $\bm{f}(\bmv^{(i)}_{n+p}, \bmu_{n+p},t_n + p\dt)$ and $\bm{\widetilde{M}} = \bm{M} + \dfrac{6p-1}{12p} \dt \bm{C}^{(i)}$.\\
				
				\textbf{Solve}: $\Delta \bma^{(i)}$ using $\bm{\widetilde{M}} \Delta \bma^{(i)} =\bm{f}(\bmv^{(i)}_{n+p}, \bmu_{n+p},t_n + p\dt) - \bm{M}\bma^{(i)}_{n+p}$.\\
				
				\If{$||\Delta \bma^{(i)}|| < \epsilon $}{
					\textbf{Compute}: $\bma_{n+p} \leftarrow \bma^{(i)}_{n+p}$.\\
					\textbf{Update}: $\bma_{n+1}  \leftarrow \dfrac{p-1}{p}\bma_n + \dfrac{1}{p}\bma_{n+p}$.\\
					\textbf{Update}: $\bmv_{n+1}  \leftarrow \bmv_n + \dt\left(\dfrac{2p-1}{2p}\bma_n + \dfrac{1}{2p}\bma_{n+p}\right)$.\\
					\textbf{Update}: $\bmu_{n+1}  \leftarrow \bmu_n + \dt \bmv_n + \dt^2\left(\dfrac{6p^2-1}{12p}\bma_n + \dfrac{-6p^2+6p+1}{12p}\bma_{n+p}\right)$.\\
					\texttt{Break}.
				}
				\If{$i =\normalfont{\texttt{max-iter} }$} {
					\texttt{Return to Step 2 and re-select the time step $\dt$.}\\
				}
				\textbf{Update}: $\bma^{(i+1)}_{n+p} \leftarrow \bma^{(i)}_{n+p} + \Delta \bma^{(i)}$.\\
			}
		}		
	\end{algorithm}

	\section{Numerical properties}\label{sec:properties}
	In this section, numerical characteristics of \novelalgrefs{1}{2} are analyzed, focusing on spectral analysis, amplitude and phase error analysis, as well as convergence rate analysis. Furthermore, the algorithms are compared to the published methods to demonstrate advantages.
	
	\subsection{Spectral analysis}
	The spectral radii $\rho$, numerical damping ratio $\bar{\xi}$ and the period error (PE) are often used to characterize  numerical dissipation and dispersion of an algorithm. For brevity, their formulas are not provided here (see the literature~\cite{hughesFiniteElementMethod2012}).
	The spectral properties of \novelalgref{1} are shown in \cref{fig:1}, where spectral radii are given in the subplot \ref{fig:1a}, indicating that \novelalgref{1} exhibits numerical dissipation ability to eliminate spurious high-frequency modes. The undamped case is the main focus of \novelalgref{1}. Based on the numerical damping ratio and the period errors shown in \cref{fig:1b,fig:1c}, it is evident that in the undamped case, it shows significant numerical dissipation and small period errors, demonstrating certain advantages. However, as the physical damping ratio increases, the stable domain rapidly decreases, and the period error also increases. It is also one of the reasons why \novelalgref{1} is not recommended for damped problems.
	
	The spectral properties of \novelalgref{2} are shown in \cref{fig:2}. When $\xi$ is small, the spectral radii exhibits an upward convex trend before the bifurcation point $\Omega_b$. It is important to note that \novelalgrefs{1}{2} does not have an algorithmic sub-member with a spectral radii $\rho \equiv 1$ for $\Omega \in (0, \Omega_s)$. It is well-known that for structural dynamics problems, spatial discretization techniques can introduce spurious high-frequency components. Therefore, \novelalgrefs{1}{2} always exhibits dissipation, as there always exists an $\Omega$ such that $\rho \leq 1$. This is not a disadvantage, as it can eliminate spurious high-frequency components in certain problems, acting as a filter. \cref{fig:2b} shows the variation of the numerical damping ratio $\bar{\xi}$ with $\Omega \in (0, \Omega_s)$ for \novelalgref{2}. When $\Omega$ is close to $0$, the numerical damping ratio matches the physical damping ratio. As $\Omega$ increases, the numerical damping ratio also increases. Comparing \cref{fig:1,fig:2}, \novelalgref{2} provides the better stability and smaller period errors when solving problems with any physical damping ratio, especially in high-frequency problems. It also demonstrates the superiority of the implicit treatment of velocity when solving damped problems. \novelalgrefs{1}{2} maintain identical when solving undamped problems. Therefore, \novelalgref{1} will be subsumed under \novelalgref{2} in the following discussions when $\xi=0$. 
	\begin{figure}[htbp]
		\centering
		\includegraphics[width=\textwidth]{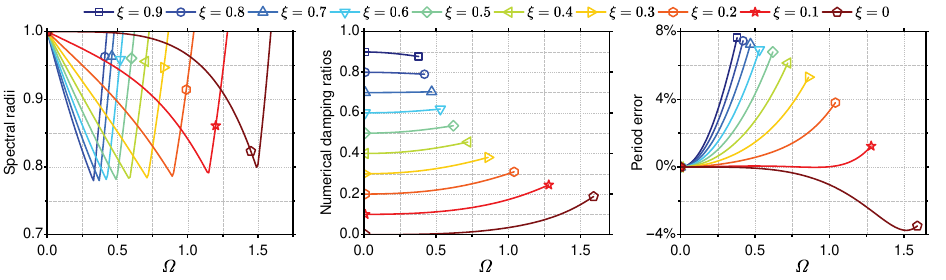}
		\begin{minipage}{0.33\textwidth}
			\centering
			\subcaption{Spectral radii} \label{fig:1a}  
		\end{minipage}%
		\begin{minipage}{0.33\textwidth}
			\centering
			\subcaption{Numerical damping ratio} \label{fig:1b}  
		\end{minipage}
		\begin{minipage}{0.33\textwidth}
			\centering
			\subcaption{Period error} \label{fig:1c}  
		\end{minipage}
		\caption{Spectral properties of \novelalgref{1} under varying $\xi$}
		\label{fig:1}
	\end{figure}
	
	 The spectral characteristics of different algorithms in the absence of physical damping are illustrated in \cref{fig:3}. Among the single-solve algorithms (GSSE~\cite{liIdenticalSecondOrder2021}, NE~\cite{newmarkMethodComputationStructural1959}), \novelalgrefs{1}{2} clearly exhibit superior performance. When $\Omega \in (0, 1]$, \novelalgrefs{1}{2} demonstrate excellent numerical spectral characteristics, including high numerical dissipation, while significantly smaller period errors~(this means the \novelalgrefs{1}{2} are more effective at eliminating high-frequency components introduced by spatial discretization, without excessively dissipating the useful components, benefits that will be confirmed in the subsequent test cases). Among the two‐sub‐step algorithms~(Noh-Bathe~\cite{nohExplicitTimeIntegration2013}, SS2HE\textsubscript{1}~\cite{liTwoThirdorderExplicit2022} and the algorithm from Wang et al.~\cite{wang_GeneralizedSinglestepMultistage_2025}), \novelalgrefs{1}{2} also exhibit strong spectral properties; for instance, when $\Omega < 0.5$, their period error is competitive with those of the two‐sub‐step methods, yet \novelalgrefs{1}{2} remain single‐solve. It must be noted that \novelalgrefs{1}{2} are compared against existing two-sub-step algorithms, which incur double the computational workload, purely to present the new methods’ features objectively. There is no attempt to suggest that \novelalgrefs{1}{2} can fully surpass those two-sub-step schemes—doing so would be unreasonable and impossible, as they consume significantly more computation.
	 These properties ensure that \novelalgrefs{1}{2} holds a distinguished position even among these two-sub-step algorithms
	\begin{figure}[h]
		\centering
		\includegraphics[width=\textwidth]{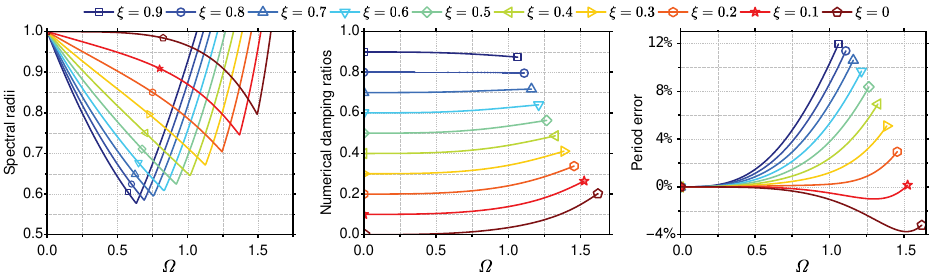}
		\begin{minipage}{0.33\textwidth}
			\centering
			\subcaption{Spectral radii} \label{fig:2a}  
		\end{minipage}%
		\begin{minipage}{0.33\textwidth}
			\centering
			\subcaption{Numerical damping ratio} \label{fig:2b}  
		\end{minipage}
		\begin{minipage}{0.33\textwidth}
			\centering
			\subcaption{Period errors} \label{fig:2c}  
		\end{minipage}
		\caption{Spectral properties of \novelalgref{2} under varying $\xi$}
		\label{fig:2}
	\end{figure}
	
	\cref{fig:4} illustrates the spectral characteristics of these algorithms when the physical damping ratio is set to $\xi = 0.1$. Compared to the single-solve algorithms, \novelalgref{2} exhibits clearer advantages: its spectral radius remains smooth both before and after the bifurcation point, and its period errors are much smaller than those of NE~\cite{newmarkMethodComputationStructural1959} and GSSE~\cite{liIdenticalSecondOrder2021}, highlighting its superiority in the presence of physical damping. The Noh-Bathe method~\cite{nohExplicitTimeIntegration2013}, although it maintains good stability regardless of the presence of damping, exhibits amplified period errors in the presence of $\xi$. This occurs because it achieves second-order accuracy rather than third-order. The results show that the third-order SS2HE\textsubscript{1}~\cite{liTwoThirdorderExplicit2022} and the schemes proposed by Wang et al.~\cite{wang_GeneralizedSinglestepMultistage_2025} experience a sharp decline in stability, whereas \novelalgref{2}, which incorporates implicit velocity treatment, maintains excellent stability. Additionally, \novelalgref{2} achieves minimal period errors while retaining significant numerical dissipation. More importantly, it provides exceptional accuracy in period errors using only one single solution.
	\begin{figure}[t]
		\centering
		\includegraphics[width=\textwidth]{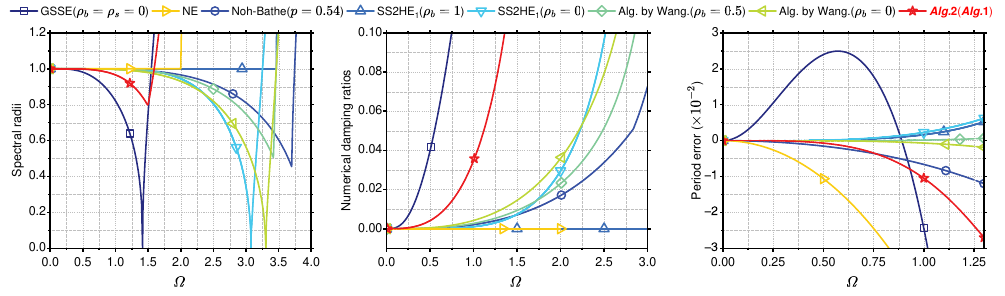}
		\begin{minipage}{0.33\textwidth}
			\centering
			\subcaption{Spectral radii} \label{fig:3a}  
		\end{minipage}%
		\begin{minipage}{0.33\textwidth}
			\centering
			\subcaption{Numerical damping ratio} \label{fig:3b}  
		\end{minipage}
		\begin{minipage}{0.33\textwidth}
			\centering
			\subcaption{Period errors} \label{fig:3c}  
		\end{minipage}
		\caption{Comparison of spectral properties under $\xi = 0$ for various explicit methods: GSSE~\cite{liIdenticalSecondOrder2021}, NE~\cite{newmarkMethodComputationStructural1959}, Noh-Bathe~\cite{nohExplicitTimeIntegration2013}, SS2H\textsubscript{1}~\cite{liTwoThirdorderExplicit2022} and algorithm 1 by Wang et al.~\cite{wang_GeneralizedSinglestepMultistage_2025}.}
		\label{fig:3}
	\end{figure}
	\begin{figure}[h]
		\centering
		\includegraphics[width=\textwidth]{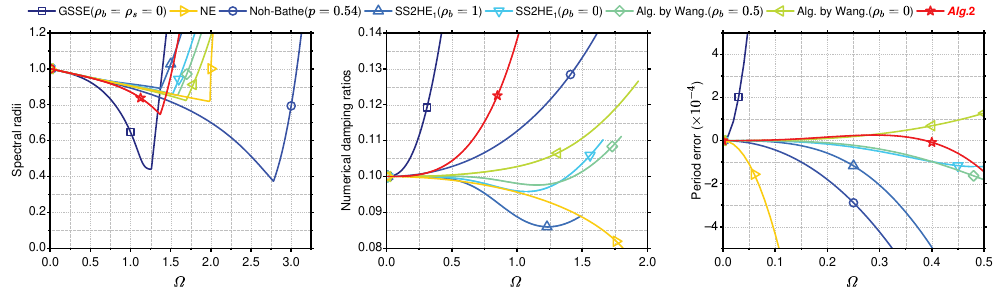}
		\begin{minipage}{0.33\textwidth}
			\centering
			\subcaption{Spectral radii} \label{fig:4a}  
		\end{minipage}%
		\begin{minipage}{0.33\textwidth}
			\centering
			\subcaption{Numerical damping ratio} \label{fig:4b}  
		\end{minipage}
		\begin{minipage}{0.33\textwidth}
			\centering
			\subcaption{Period errors} \label{fig:4c}  
		\end{minipage}
		\caption{Comparison of spectral properties under $\xi = 0.1$ for various explicit methods: GSSE~\cite{liIdenticalSecondOrder2021}, NE~\cite{newmarkMethodComputationStructural1959}, Noh-Bathe~\cite{nohExplicitTimeIntegration2013}, SS2H\textsubscript{1}~\cite{liTwoThirdorderExplicit2022} and algorithm 1 by Wang et al.~\cite{wang_GeneralizedSinglestepMultistage_2025}.}
		\label{fig:4}
	\end{figure}
	
	In order to show a more intuitive comparison of the stability domains of \novelalgrefs{1}{2} , \cref{fig:stab} illustrates the variation of $\Omega_s$ with the physical damping ratio $\xi$ for the proposed algorithms, alongside the published algorithms. \cref{fig:5a} shows the stable domains of different single-solve fully explicit algorithms, including TW~\cite{maheoNumericalDampingSpurious2013} and GSSE~\cite{liIdenticalSecondOrder2021}, along with the \novelalgrefs{1}{2}. From the figure, it is evident that the stability domains of single-solve fully explicit algorithms cannot exceed $2$. GSSE~\cite{liIdenticalSecondOrder2021} and TW~\cite{maheoNumericalDampingSpurious2013} have a stability domain of $\Omega_s = 2$ when $\rhob = 1$, but as damping increases, their stability domains decrease over-linearly. When the physical damping is large, \novelalgref{2} demonstrates the best stable domain. The stable domains of different single-solve explicit algorithms with implicit velocity treatment (GSSI~\cite{zhaoSelfstartingDissipativeAlternative2023} and NE~\cite{newmarkMethodComputationStructural1959}) and the \novelalgrefs{1}{2} are shown in \cref{fig:5b}. Among them, GSSI~\cite{zhaoSelfstartingDissipativeAlternative2023} has the best stability within the category of single-solve explicit algorithms. Under certain conditions, its stability domain exceeds the limit of 2 imposed by the CD method and it achieves identical second-order accuracy. When $\rhob = 1$ and $\rho_s = 0$, it degenerates into NE ~\cite{newmarkMethodComputationStructural1959}. While the stability of \novelalgref{2} decreases as $\xi$ increases, its primary advantage is the enhanced calculation accuracy. \cref{fig:5c} shows the stability of different two-sub-step explicit algorithms. These two-sub-step algorithms have stable domains greater than 2 in the undamped or low damping cases, but they are highly sensitive to damping. As the damping ratio $\xi$ increases, the third-order accurate algorithms (the algorithm proposed  by Wang et al.\cite{wang_GeneralizedSinglestepMultistage_2025} and SS2HE\textsubscript{1}\cite{liTwoThirdorderExplicit2022}) experience a rapid decrease in stability. When the physical damping ratio $\xi > 0.15$, their stability is worse than the third-order accurate \novelalgref{2} proposed in this paper. The stability domain of the two-sub-step Noh-Bathe method~\cite{nohExplicitTimeIntegration2013} also decreases as damping increases, and at a certain point ($\xi\approx0.75$), it remains smaller than \novelalgref{2}. It remains important to emphasize that the new methods are evaluated against the two-sub-step algorithms using half the computational workload; this is admittedly unfair but required for objectivity. In conclusion, the implicit velocity treatment technique significantly enhances stability, and \novelalgref{2} benefits from this, providing good stability within the realm of single-solve explicit algorithms, even in multi-sub-step algorithms.
	\begin{figure}[h]
		\centering
		\includegraphics[width=\textwidth]{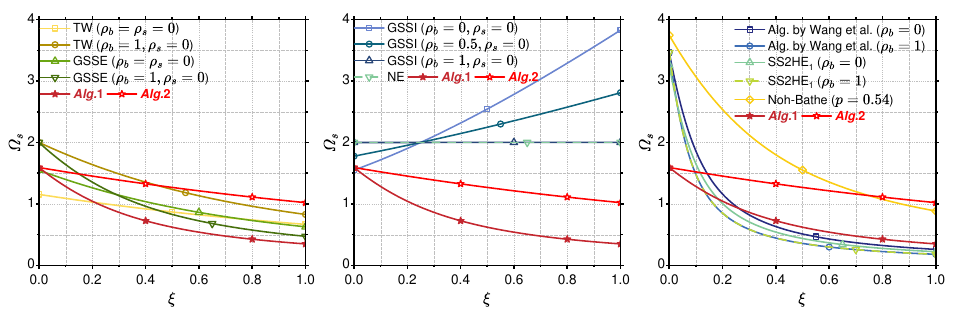}
		\begin{minipage}{0.33\textwidth}
			\centering
			\subcaption{Single-solve fully explicit algorithms} \label{fig:5a}  
		\end{minipage}%
		\begin{minipage}{0.33\textwidth}
			\centering
			\subcaption{\centering Single-solve explicit algorithms with the implicit treatment of velocity} \label{fig:5b}
		\end{minipage}
		\begin{minipage}{0.33\textwidth}
			\centering
			\subcaption{\centering Two sub-steps explicit algorithms} \label{fig:5c}  
		\end{minipage}
		\caption{Comparison of the conditional stability domains of \novelalgrefs{1}{2}, as well as various published explicit algorithms, under different parameters.} \label{fig:stab}
	\end{figure}
	
	\subsection{Amplitude and phase errors}
	Currently, almost all literature measures amplitude and phase errors of time integration methods by calculating numerical damping ratios and period errors numerically. Numerical techniques struggle to provide substantial guidance for optimizing algorithm performance. Gladwell and Thomas~\cite{gladwellDampingPhaseAnalysis1983}  provided an analysis technique for amplitude and phase errors but only derived the case for zero-dissipation algorithms without refining the amplitude and phase error analysis, not calculating the analytical expressions for amplitude and phase errors. Li et al.~\cite{liDesigningDevelopingSingle2023} refined a general method for analytically computing the amplitude and phase errors for any time integration methods. This technique for analyzing amplitude and phase errors allows for a quantitative assessment of the errors in algorithms with the same accuracy order, providing a novel metric for evaluating the quality of an algorithm. Additionally, it offers valuable guidance for algorithm design.
	
		\begin{figure}[htbp]
		\centering
		\includegraphics[width=\textwidth]{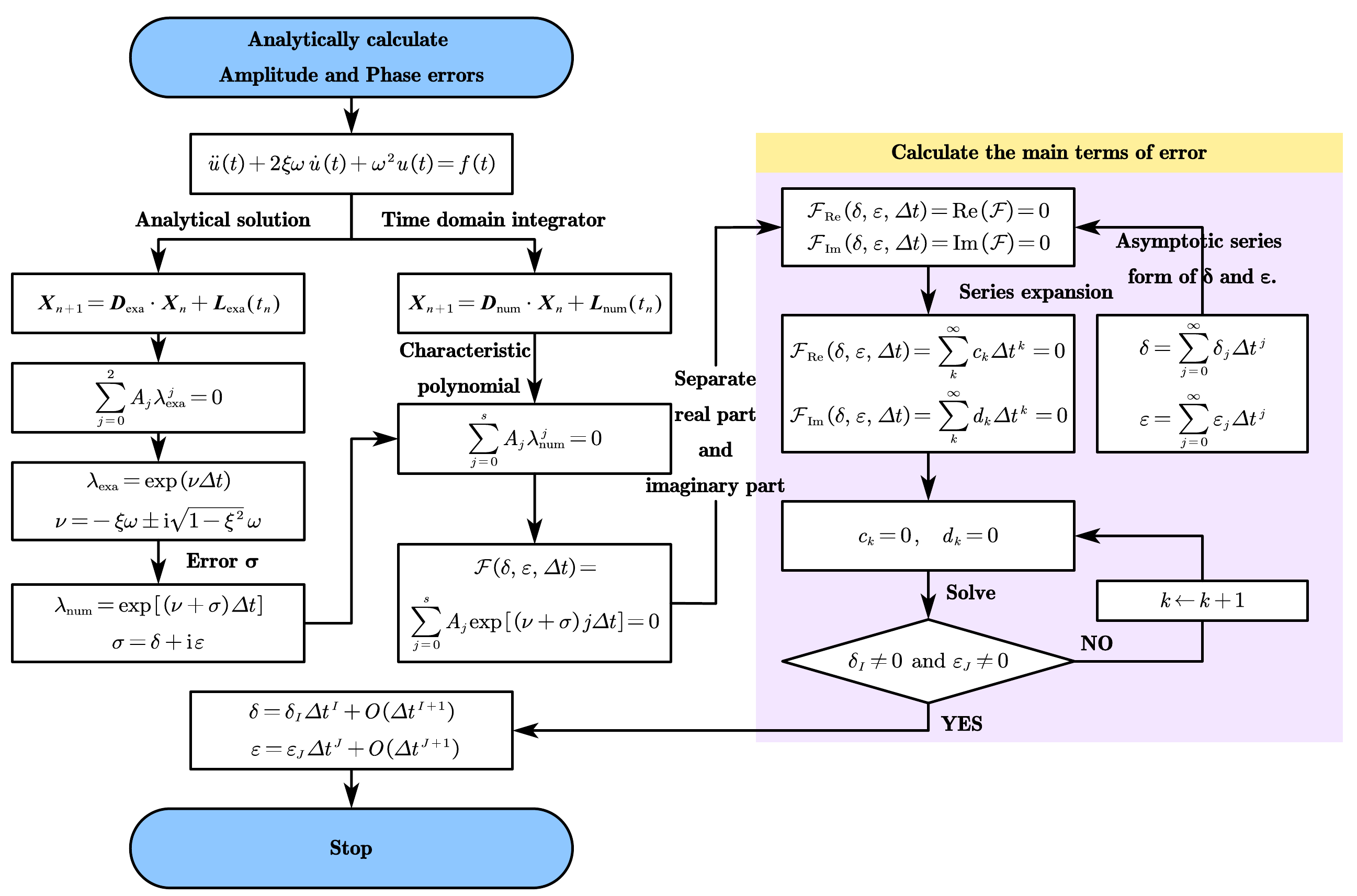}
		\caption{The analytical technique for computing amplitude and phase errors for any time integration schemes}
		\label{fig:6}
	\end{figure}
	\cref{fig:6} shows the general analytical process for calculating the leading terms of amplitude and phase errors for any time integration methods~\cite{liDesigningDevelopingSingle2023}. The amplitude and phase errors for \novelalgref{1} are calculated as
	\begin{subequations}\label{eq:42}
		\begin{empheq}[left=\empheqlbrace]{align}
			\varepsilon &= \frac{\left(2+\sqrt{3}\right) \left(4 \xi^{2}-3\right) \omega^{3}\xi^{2}}{12 \sqrt{1-\xi^{2}}} \dt^2 + \mathcal{O}(\dt^3) \\
			\delta &= \frac{\left(2+\sqrt{3}\right) \left(4 \xi^{2}-1\right)\omega^{3}\xi}{12} \dt^2 + \mathcal{O}(\dt^3)
		\end{empheq}
	\end{subequations}
	The leading-order error terms of other self-starting single-solve explicit algorithms can be found in the  literature~\cite{LXJZ20250311001}. From \cref{eq:42}, it can be observed that the dominant error term of \novelalgref{1} is proportional to the square of the time step $\Delta t$, which is identical with its second-order accuracy in the presence of damping. 
	
	The amplitude and phase errors of \novelalgref{2} are calculated as
	\begin{subequations}\label{eq:43}
		\begin{empheq}[left=\empheqlbrace]{align}
			\varepsilon &= \frac{  \left[ 16  \sqrt{3} \xi^{4}+12\left(1-\sqrt{3}\right) \xi^{2}-9-\sqrt{3} \right] \omega^{4} \xi}{144 \sqrt{1 - \xi^{2}}} \Delta t^3 + \mathcal{O}(\Delta t^4) \\
			\delta &= \frac{ \left[ 16  \sqrt{3}\xi^{4}+4\left(3- \sqrt{3}\right) \xi^{2}-3-\sqrt{3} \right]\omega^{4}}{144} \Delta t^3 + \mathcal{O}(\Delta t^4)
		\end{empheq}
	\end{subequations}
	It is clear that the leading error terms of \novelalgref{2} are proportional to the cube of the time step $\Delta t$, whereas those of other single-solve explicit algorithms are typically proportional to $\Delta t^2$, highlighting the superior accuracy of \novelalgref{2}. When $\xi = 0$, \cref{eq:42,eq:43} reduce to
	\begin{subequations}
		\begin{empheq}[left=\empheqlbrace]{align}
			\varepsilon &= \frac{\left(8+5 \sqrt{3}\right) \omega^{5}}{1440}\Delta t^4 + \mathcal{O}(\Delta t^5) \\
			\delta &= -\frac{ \left(3+\sqrt{3}\right)\omega^{4}}{144} \Delta t^3 + \mathcal{O}(\Delta t^4)
		\end{empheq}
	\end{subequations}
	
	In this case, the phase error $\varepsilon$ reaches an order of $\mathcal{O}(\Delta t^4)$, which leads to the performance advantages illustrated in \cref{fig:s1}. The tested system adopts the parameters $\xi = 0$, $\omega = 2$ and $f(t) = 0$ in \cref{eq:single} with initial conditions $u(0) = 1$ and $\dot{u}(0) = 0$. For this undamped system with the exact solution being $u(t) = \cos(2 t)$, \novelalgrefs{1}{2} exhibit perfectly aligned solutions due to their higher-order phase accuracy, while all other single-solve algorithms using zero-dissipation ($\rho_b = 1$) configurations show noticeable phase drift.
	\begin{figure}[h]
	\centering
	\begin{subfigure}[h]{0.49\textwidth}
		\centering
		\includegraphics[width=\textwidth]{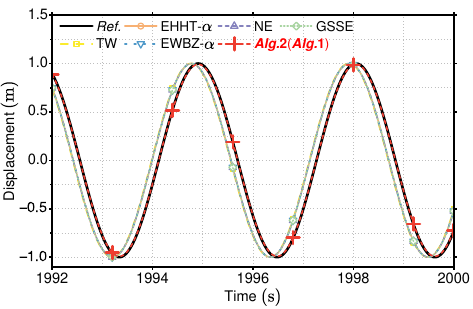}
		\captionsetup{justification=centering}
		\caption{Numerical displacements of the SDOF system \eqref{eq:single} at $\xi=0$ and $\omega=2$}
		\label{fig:s1}
	\end{subfigure}
	\begin{subfigure}[h]{0.49\textwidth}
		\centering
		\includegraphics[width=\textwidth]{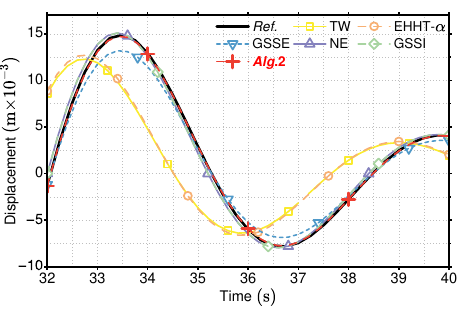}
		\captionsetup{justification=centering}
		\caption{Numerical displacements of the SDOF system \eqref{eq:single} at $\xi=0.2$ and $\omega=2$}
		\label{fig:s2}
	\end{subfigure}
	\caption{Numerical displacements for SDOF without external forcing ($f(t)=0$) using different algorithms}
\end{figure}
	
	The physical parameters of the SDOF system \eqref{eq:single} are set to $\xi = 0.2$, $\omega = 2$ and $f(t) = 0$. The initial conditions are set to $u(0) = 0$ and $\dot{u}(0) = 12$, and the exact solution of this system is $u(t) = 5\sqrt{6} \cdot \exp(-t/5) \cdot \sin \left[ 2t/(5\sqrt{6}) \right]$. Applying zero dissipation ($\rho_b = 1$, note: GSSE~\cite{liIdenticalSecondOrder2021} and GSSI~\cite{zhaoSelfstartingDissipativeAlternative2023} algorithms cannot set $\rho_b = \rho_s = 1$ when solving problems with physical damping ($\xi\neq0$), therefore non-dissipative algorithms of the two are set to $\rho_b = 1, \rho_s = 0$), different single-solve algorithms are used to calculate the numerical solution for this system as shown in \cref{fig:s2}. It is evident that \novelalgref{2} provides the most accurate solutions. Following that, GSSI~\cite{zhaoSelfstartingDissipativeAlternative2023} and NE~\cite{newmarkMethodComputationStructural1959} also yield relatively accurate results. This can be attributed to the fact that all these algorithms incorporate implicit treatment of velocity.

	\subsection{Convergence rate tests}
	The standard SDOF system \eqref{eq:single} is used to analyze the system with initial conditions given as $\dot{u}(t_0) = \dot{u}_0$ and $u(t_0) = u_0$. A certain test duration $T$ and integration step size $\dt$ are selected and the global errors of the solution variables is used to represent the convergence rate.
	\begin{equation}
		\label{eq:51}
		\text{Global Error} = \left[ \frac{\sum_{i=1}^{N} \left( x(t_i) - x_i \right)^2}{\sum_{i=1}^{N} x^2(t_i)} \right]^{1/2}
	\end{equation}
	where $x(t_i)$ and $x_i$ represent the exact and numerical solutions of the solution variable at time $t_i$, and $N$ is the total number of time steps.

	\subsubsection{Undamped Vibration System}
	The physical parameters of the SDOF system \eqref{eq:single} are set to $\xi = 0$, $\omega = 1$ and $f(t) = \cos(2t)$. The initial conditions are given as $u(0) = -1/3$ and $\dot{u}(0) = 0$ with the exact solution  being $u(t) = -\cos(2t)/3$. For all tested algorithms, the non-dissipative member ($\rho_b = 1$) is selected. The Noh-Bathe method~\cite{nohExplicitTimeIntegration2013} is configured with the originally recommended parameter $p = 0.54$. The total simulation time is set to $T = 6.5$ seconds, and the resulting convergence rate curves are shown in \cref{fig:7}.
	The results reveal that \novelalgrefs{1}{2} achieve third-order accuracy in both displacement and velocity predictions under undamped conditions, which aligns with the theoretical analysis presented earlier. In contrast, other single-solve algorithms attain at most second-order accuracy when solving undamped problems. The algorithm proposed by Wang et al.~\cite{wang_GeneralizedSinglestepMultistage_2025} achieves third-order accuracy in displacement and velocity, matching the performance of \novelalgrefs{1}{2}. Similarly, SS2HE\textsubscript{1}~\cite{liTwoThirdorderExplicit2022} also achieves identical third-order accuracy. However, these two-sub-step algorithms require twice the computational cost compared to \novelalgrefs{1}{2}.
	\begin{figure}[htbp]
		\centering
		\includegraphics[width=\textwidth]{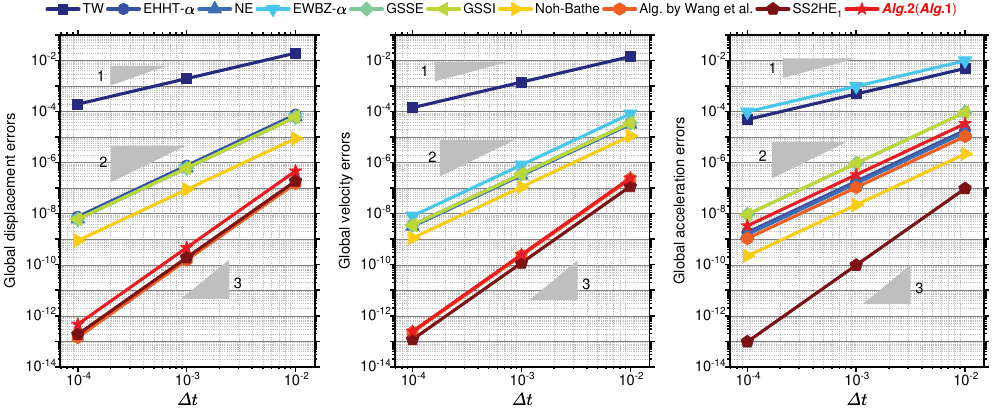}
		\caption{Convergence rates of various algorithms for solving forced vibration with $\xi = 0$, $\omega = 1$ and $f(t) = \cos(2t)$ in \cref{eq:single}}
		\label{fig:7}
	\end{figure}
	
	\subsubsection{Damped vibration system}
	For the forced vibration system with damping, the physical parameters of the SDOF system are set to $\xi = 2/\sqrt{5}$, $\omega = \sqrt{5}$ and $f(t) = \sin(2t)$. The initial conditions are specified as $u(0) = 57/65$ and $\dot{u}(0) = 2/65$ with the exact solution given by $u(t) = \exp(-2t)\left( \cos(t) + 2 \sin(t) \right) - \left( 8 \cos(2t) - \sin(2t) \right)/65$. Similarly, a test duration of $T = 5.6$ seconds is used and the dissipative members ($\rho_b = 0.8$) are selected to solve this damped forced vibration system. The convergence rates are shown in \cref{fig:8}.
	In this test, in terms of displacement accuracy, \novelalgref{2} yields the smallest error, outperforming even two-sub-step algorithms such as the first scheme by Wang et al.~\cite{wang_GeneralizedSinglestepMultistage_2025} and SS2HE\textsubscript{1}~\cite{liTwoThirdorderExplicit2022}, which require significantly more computational effort. \novelalgref{2} also clearly surpasses all other single-solve algorithms included in the test. Regarding velocity accuracy, \novelalgref{2} matches the performance of the third-order accurate two-sub-step algorithms, demonstrating a substantial advantage.

	Based on the aforementioned scenarios, the accuracy performance of nearly all published self-starting single-solve explicit algorithms is consolidated in \cref{tab:1}, which also includes several representative two-sub-step explicit methods. To date, no self-starting single-solve explicit algorithm (including those incorporating implicit velocity treatment) has achieved third-order accuracy. Moreover, those that have achieved third-order accuracy all require multiple solutions of the equilibrium equations.
	
	The fully explicit \novelalgref{1} developed in this study achieves third-order accuracy in displacement and velocity and second-order accuracy in acceleration under undamped conditions, while maintaining identical second-order accuracy in other cases. The explicit \novelalgref{2} attains third-order accuracy in displacement and velocity and second-order accuracy in acceleration for solving general structural dynamics problems ($\xi\neq 0$). Most importantly, both algorithms require only a single solution per time step. The proposed methods thus address a critical gap in the current literature on high-order self-starting single-solve explicit algorithms.
		\begin{figure}[htbp]
		\centering
		\includegraphics[width=\textwidth]{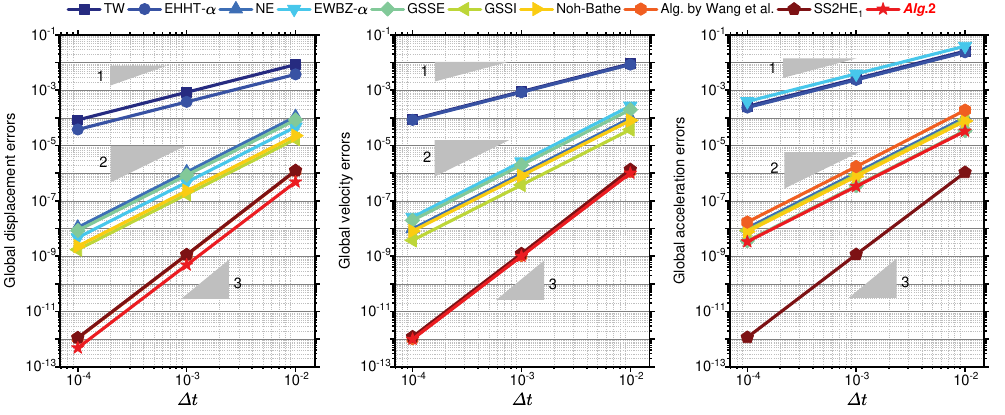}
		\caption{Convergence rates of various algorithms for solving damping forced vibration with $\xi = 2/\sqrt{5}$, $\omega = \sqrt{5}$ and $f(t) = \sin(2t)$ in \cref{eq:single}}
		\label{fig:8}
	\end{figure}

	\begin{table}[t] 
		\caption{The accuracy order of various self-starting explicit algorithms}
		\centering
		\renewcommand{\arraystretch}{1.5} 
			\begin{threeparttable}
				\footnotesize
				\begin{tabular}{c c c >{\centering\arraybackslash}p{1cm} >{\centering\arraybackslash}p{1cm} >{\centering\arraybackslash}p{1cm} >{\centering\arraybackslash}p{1cm} >{\centering\arraybackslash}p{1cm} >{\centering\arraybackslash}p{1cm}}
					\toprule
					\multicolumn{1}{c}{\multirow{2}{*}{Algorithms}} & 
					\multicolumn{1}{c}{\multirow{2}{1.2cm}{\centering Num. of solutions}} & 
					\multicolumn{1}{c}{\multirow{2}{1.8cm}{\centering Imp. treat. of velocity}} & 
					\multicolumn{3}{c}{\centering Accuracy ($f(t) \neq 0,\ \xi = 0$)} & 
					\multicolumn{3}{c}{\centering Accuracy ($f(t) \neq 0,\ \xi \neq 0$)} \\ \cline{4-6} \cline{7-9}
					& & &\multicolumn{1}{c}{\centering Dis.} &Vel. &Acc. &Dis. &Vel. &Acc. \\
					\midrule
					TW~\cite{maheoNumericalDampingSpurious2013} & 1 & No & 1 & 1 & 1 & 1 & 1 & 1 \\
					EN-$\beta$~\cite{hughesImplicitexplicitFiniteElements1978} & 1 & No & 1 & 1 & 1 & 1 & 1 & 1 \\
					EDV1~\cite{zhangTwoNovelExplicit2019} & 1 & No & 1 & 1 & --- & 1 & 1 & --- \\
					TSSE~\cite{liDevelopmentCompositeSubstep2021} & 1 & No & 1 & 1 & 1 & 1 & 1 & 1 \\
					EHHT-$\alpha$~\cite{mirandaImprovedImplicitExplicit1989} & 1 & No & 2 & 2 & 1 & 1 & 1 & 1 \\
					CL~\cite{chungNewFamilyExplicit1994} & 1 & No & 2 & 2 & 1 & 2 & 2 & 1 \\
					NT~\cite{namburuGeneralizedGammaFamily1992} & 1 & Yes & 2 & 2 & 1 & 2 & 2 & 1 \\
					EWBZ-$\alpha$~\cite{hulbertExplicitTimeIntegration1996} & 1 & No & 2 & 2 & 1 & 2 & 2 & 1 \\
					EG-$\alpha$~\cite{liIdenticalSecondOrder2021} & 1 & No & 2 & 2 & 1 & 2 & 2 & 1 \\
					ICL~\cite{kimSimpleExplicitSingle2019} & 1 & No & 2 & 2 & 2 & 2 & 2 & 2 \\
					NE~\cite{newmarkMethodComputationStructural1959} & 1 & Yes & 2 & 2 & 2 & 2 & 2 & 2 \\
					GSSE~\cite{liIdenticalSecondOrder2021} & 1 & No & 2 & 2 & 2 & 2 & 2 & 2 \\
					GSSI~\cite{zhaoSelfstartingDissipativeAlternative2023} & 1 & Yes & 2 & 2 & 2 & 2 & 2 & 2 \\
					Noh-Bath~\cite{nohExplicitTimeIntegration2013} & 2 & No & 2 & 2 & 2 & 2 & 2 & 2 \\
					Algs. 1,2 ~\cite{wang_GeneralizedSinglestepMultistage_2025} & 2 & No & 3 & 3 & 2 & 3 & 3 & 2 \\
					SS2HE\textsubscript{1,2}~\cite{liTwoThirdorderExplicit2022} & 2 & No & 3 & 3 & 3 & 3 & 3 & 3 \\
					\rowcolor{gray!20}\textcolor{red}{\novelalgref{1}} & \textcolor{red}{1} & \textcolor{red}{No} & \textcolor{red}{3} & \textcolor{red}{3} & \textcolor{red}{2} & \textcolor{red}{2} & \textcolor{red}{2} & \textcolor{red}{2} \\
					\rowcolor{gray!20}\textcolor{red}{\novelalgref{2}} & \textcolor{red}{1} & \textcolor{red}{Yes} & \textcolor{red}{3} & \textcolor{red}{3} & \textcolor{red}{2} & \textcolor{red}{3} & \textcolor{red}{3} & \textcolor{red}{2} \\
					\bottomrule
				\end{tabular}
				\begin{tablenotes}
					\small
					\item Note: '---' indicates that the value is not applicable.
				\end{tablenotes}
			\end{threeparttable}
		\label{tab:1}
	\end{table}

	\section{Numerical Examples}
	\label{sec:examples}
	This section will verify and compare the significant advantages of \novelalgrefs{1}{2} through various examples and algorithms. The examples include transient problems and high-frequency dissipation in impacted straight bars, as well as two-dimensional scalar wave propagation problems. Furthermore, nonlinear problems, where explicit algorithms demonstrate substantial advantages, will be tested. These include the Van der Pol system, the spring-pendulum system. Finally, a complex gap nonlinear rudder system, exhibiting strong nonlinear characteristics, will be tested. This system is used for practical simulations in engineering structures.
	
	\subsection{The Van der Pol system}
	The classic Van der Pol system is commonly used to evaluate the performance of algorithms on nonlinear problems. The governing equation is given by
	\begin{equation}\label{eq:van}
		{\ddot{x}} - \mu (1 - {x}^2) {\dot{x}} + {x} = A\sin(\omega_p\, t)
	\end{equation}
	where $x$ represents the state variable, $\mu$ is the control parameter, $A$ is the amplitude of the external excitation and $\omega_p$ is the frequency of the external excitation. The parameters are set to $\mu = 5$, $A = 5$, and $\omega_p = 2.5$, which characterize a strongly nonlinear system. To simulate this system, algorithms achieving identical second-order accuracy, including ICL~\cite{kimSimpleExplicitSingle2019}, GSSE~\cite{liIdenticalSecondOrder2021} and GSSI~\cite{zhaoSelfstartingDissipativeAlternative2023}, as well as the two-sub-step methods Noh-Bathe~\cite{nohExplicitTimeIntegration2013} and SS2HE\textsubscript{1}~\cite{liTwoThirdorderExplicit2022}, are employed with a time step size of $\Delta t = 0.005$ seconds.
	
	\begin{figure}[htbp]
		\centering
		\includegraphics[width=\textwidth]{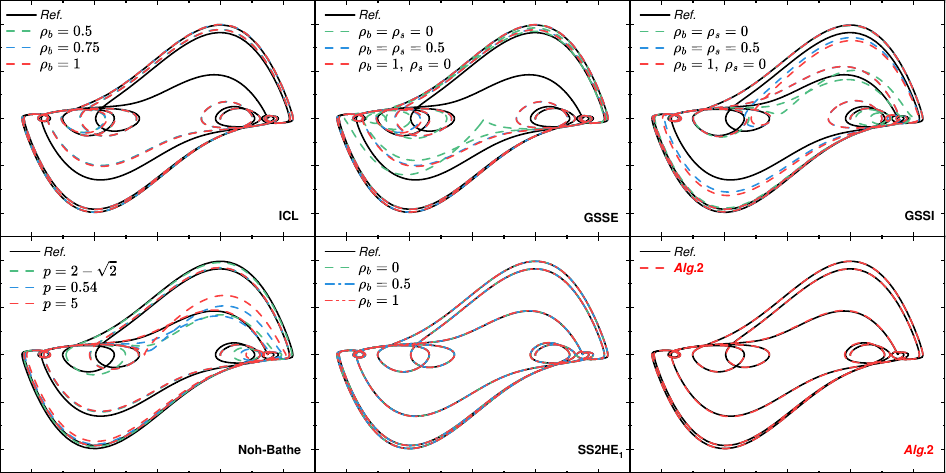}
		\caption{Phase portrait of the Van der Pol system \eqref{eq:van} with $\mu = 5$, $A = 5$, and $\omega_p = 2.5$ computed using various explicit algorithms}
		\label{fig:9}
	\end{figure}
	The phase portrait of the system provides insight into its dynamic behavior. \cref{fig:9} illustrates the phase portraits computed by different algorithms using the specified parameters, the reference solution obtained using the fourth-order explicit Runge-Kutta method, employing a time step size of $\Delta t = 10^{-8}$ seconds over a total simulation time of 30 seconds. Several conclusions can be drawn from the results: The fully explicit single-solve algorithms, GSSE~\cite{liIdenticalSecondOrder2021} and ICL~\cite{kimSimpleExplicitSingle2019}, produce significantly inaccurate trajectories, with the central portion of the phase portrait entirely missing. The algorithm GSSI~\cite{zhaoSelfstartingDissipativeAlternative2023} which incorporates implicit velocity treatment and the two-sub-step Noh-Bathe method~\cite{nohExplicitTimeIntegration2013} also exhibit notable deviations. In contrast, the third-order accurate two-sub-step algorithm SS2HE\textsubscript{1}~\cite{liTwoThirdorderExplicit2022} and the proposed single-solve \novelalgref{2} produce phase portrait curves that nearly perfectly overlap with the reference solution, indicating highly accurate predictions for both displacement and velocity.

	\subsection{Spring pendulum}
	As shown in \cref{fig:10a}, the spring pendulum represents a typical nonlinear system. When oscillating at small angles, its behavior can be approximated as linear; however, as the swing angle increases, the system exhibits pronounced nonlinear characteristics. Due to this property, the spring pendulum is commonly used to evaluate the performance of numerical algorithms in handling nonlinear dynamics.
	\begin{figure}[h]
		\begin{subfigure}[b]{0.49\textwidth}
			\centering
			\includegraphics[width=0.4\linewidth]{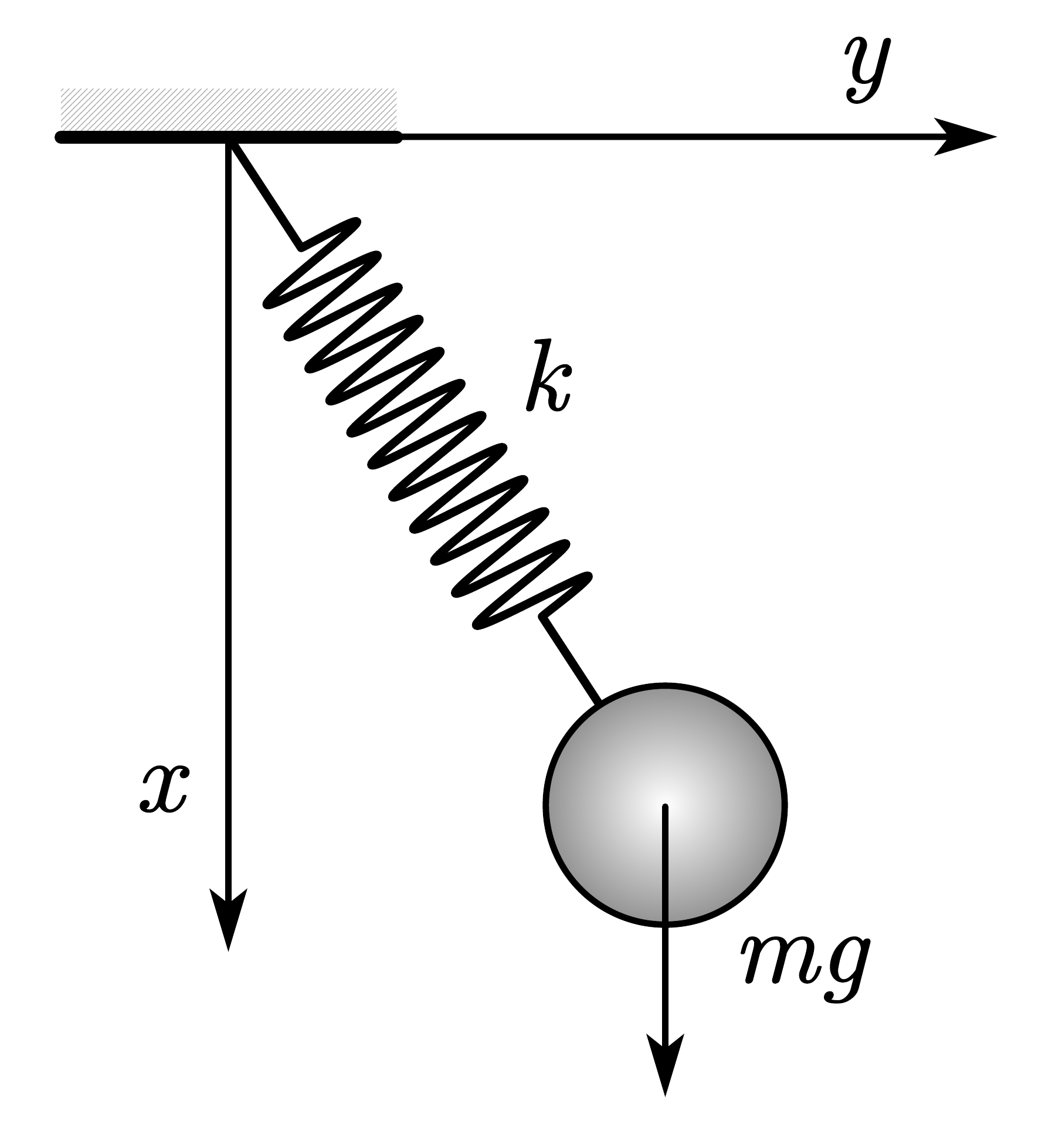}
			\caption{Planar spring pendulum}
			\label{fig:10a}
		\end{subfigure}
		\begin{subfigure}[b]{0.49\textwidth}
			\centering
			\includegraphics[width=0.4\linewidth]{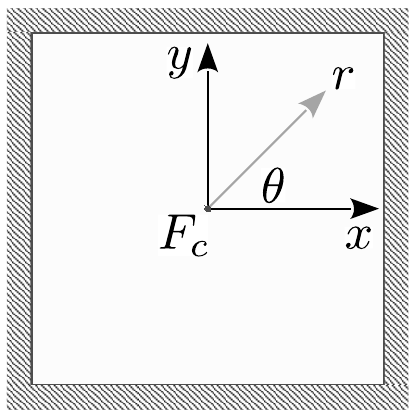}
			\caption{Two-dimensional membrane}
			\label{fig:wave2d}
		\end{subfigure}
		\caption{Schematic diagrams of the planar spring pendulum and two-dimensional membrane}
	\end{figure}
	
	The planar spring pendulum has two degrees of freedom, denoted as $x$ and $y$. The dynamic equation governing the motion of the spring pendulum is given by~\cite{geradinMechanicalVibrationsTheory2015}
	\begin{equation}\label{eq:pul}
		\left[ \begin{matrix}
			m & 0 \\
			0 & m \\
		\end{matrix} \right]
		\left[ \begin{matrix}
			\ddot{x} \\
			\ddot{y} \\
		\end{matrix} \right]
		+\begin{bmatrix}
			\begin{matrix}
				k\left( 1 - \dfrac{\ell_0}{\ell} \right)x \\[2ex]
				k\left( 1 - \dfrac{\ell_0}{\ell} \right)y \\
			\end{matrix}
		\end{bmatrix}
		=
		\left[ \begin{matrix}
			mg \\
			0  \\
		\end{matrix} \right]
	\end{equation}
	where $m = 1$ kg is the mass of the ball; $g = 10$ m/s\textsuperscript{2} is the gravitational acceleration; $\ell = \sqrt{x^2 + y^2}$ is the length of the pendulum; $\ell_0 = 1$ m is the initial length of the spring and $k = 30$ N/m is the spring stiffness. The initial conditions are given by:
	\begin{equation}
		\left\{
		\begin{aligned}
			& x(0) = 0             \\
			& y(0) = 1.5 \text{ m}
		\end{aligned}
		\right., \quad
		\left\{
		\begin{aligned}
			& \dot{x}(0) = 0 \\
			& \dot{y}(0) = 0.
		\end{aligned}
		\right.
	\end{equation}
	
	\cref{fig:11} presents the displacement and velocity results computed by different explicit algorithms with a time step of $\Delta t = 0.005$ seconds. The reference solution is obtained using the fourth-order explicit Runge-Kutta method with $\Delta t = 10^{-8}$ seconds. The non-dissipative member of TSSE~\cite{liDevelopmentCompositeSubstep2021} is employed, while ICL~\cite{kimSimpleExplicitSingle2019} and GSSE~\cite{liIdenticalSecondOrder2021} are used with parameters $\rho_b = \rho_s = 0.5$.
	It is observed that the first-order explicit algorithm (TSSE~\cite{liDevelopmentCompositeSubstep2021}) accumulates significant errors over long-term integration, even when the zero-dissipation setting $\rho_b = 1$ is applied. ICL~\cite{kimSimpleExplicitSingle2019} and GSSE~\cite{liIdenticalSecondOrder2021} yield nearly overlapping displacement predictions, but show substantial errors in the velocity responses. A clear discrepancy remains when compared to the reference solution, with noticeable errors emerging beyond $t = 20$ s. In contrast, the two-sub-step algorithms (Noh-Bathe method~\cite{nohExplicitTimeIntegration2013}, the proposed algorithm by Wang et al.~\cite{wang_GeneralizedSinglestepMultistage_2025} and SS2HE\textsubscript{1}~\cite{liTwoThirdorderExplicit2022}) as well as the proposed \novelalgrefs{1}{2} produce results that closely match the reference solution.
		\begin{figure}[htbp]
		\centering
		\includegraphics[width=\textwidth]{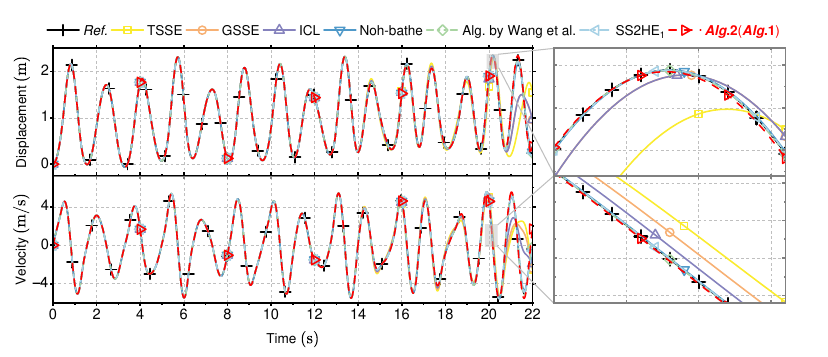}
		\caption{Numerical solutions of displacement and velocity computed using different algorithms for the spring pendulum system \eqref{eq:pul} by $\dt=0.005$s}
		\label{fig:11}
	\end{figure}
	
	Reducing the time step to $\Delta t = 0.0001\,$s and extending the simulation reveals, under identical parameter settings, all single-solve algorithms begin to deviate significantly from the reference solution at approximately $t = 39\,$s, shown in \cref{fig:122}.
	Around $t = 47$ s, the two-sub-step Noh-Bathe method~\cite{nohExplicitTimeIntegration2013} also exhibits noticeable phase errors. In contrast, the third-order accurate algorithms, including the method proposed by Wang et al.~\cite{wang_GeneralizedSinglestepMultistage_2025}, SS2HE\textsubscript{1}~\cite{liTwoThirdorderExplicit2022} and the proposed \novelalgrefs{1}{2}, maintain high accuracy and remain in close agreement with the reference solution for both displacement and velocity, even up to $t = 50$ s.
	\begin{figure}[htbp]
		\centering
		\includegraphics[width=\textwidth]{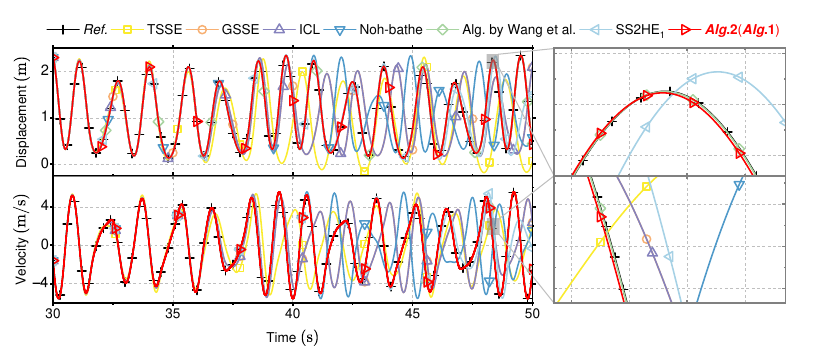}
		\caption{Numerical solutions of displacement and velocity computed using different algorithms for the spring pendulum system \eqref{eq:pul} by $\dt=0.0001$s}
		\label{fig:122}
	\end{figure}

	\subsection{Impact on a one-dimensional rod}
	\label{sec:rod}
	As shown in \cref{fig:12}, a one-dimensional rod under impact is commonly used to test numerical models, particularly for high-frequency spurious components, while also simulating basic impact dynamics and wave propagation problems. The elastic rod subjected to an axial load serves as a model for one-dimensional wave propagation.
	\begin{figure}[h]
		\centering
		\includegraphics[width=0.6\textwidth]{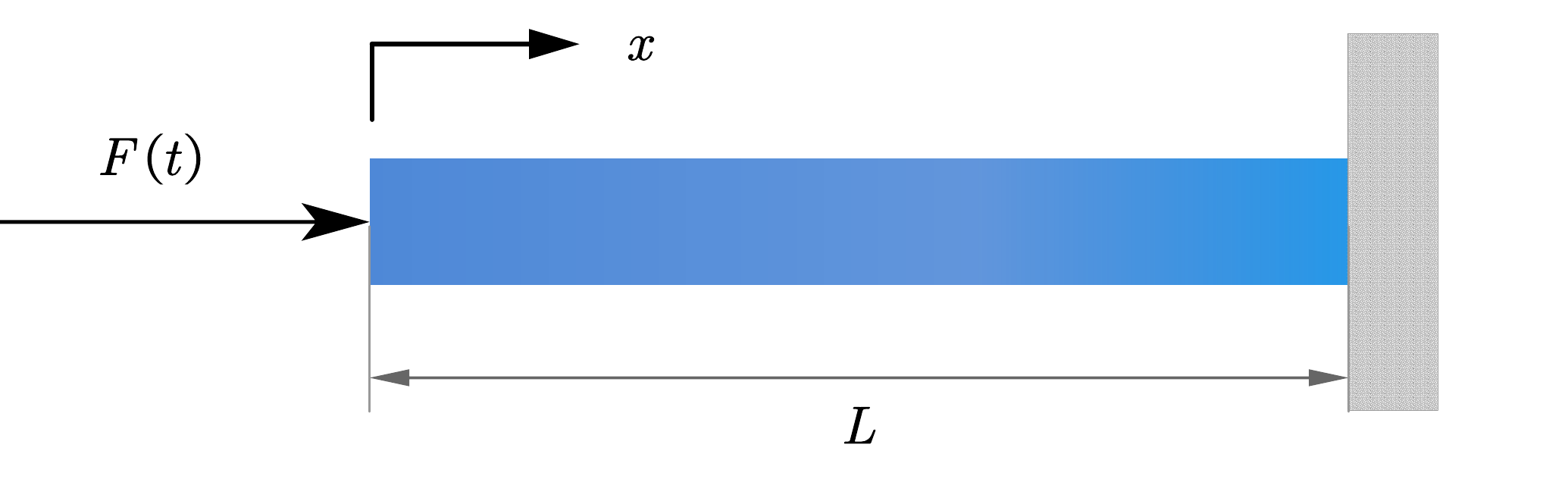}
		\caption{One-dimensional isotropic elastic bar initially at rest subjected to a sudden external force $F(t)=100$N, used to characterize high-frequency response}
		\label{fig:12}
	\end{figure}
	
	The governing partial differential equation for the axial displacement $u(x,t)$ is given by~\cite{hughesFiniteElementMethod2012}
	\begin{equation}\label{eq:55}
		\rho A\frac{{{\partial }^{2}}u(x,\;t)}{\partial {{t}^{2}}}-EA{{\nabla }^{2}}u(x,\;t)=F(t)
	\end{equation}
	where $u(x, t)$ represents the field scalar, and the other physical parameters include the cross-sectional area $A = 1$ m\textsuperscript{2}, the Young's modulus $E = 3 \times 10^7 $ Pa, the length $L = 20$ m and the rod density $\rho = 7.4 \times 10^{-4} $ kg/m\textsuperscript{3}. After spatial discretization using the finite element method, \cref{eq:55} can be reduced to a system of ordinary differential equations in time.
	\begin{equation}
		\bm{M}\bma(t)+c_0^2 \bm{K} \bmu(t) = \bm{F}(t)
	\end{equation}
	where $c_0 = \sqrt{E/\rho}$ is the wave speed. Spatial discretization is carried out using two-node linear elements. The element mass matrix is distinguished into two forms: a diagonal mass matrix and a distributed mass matrix, corresponding to $r = 0$ and $r = 1$, respectively.
	\begin{equation}\label{eq:56}
		{{\bm{m}}^{\text{e}}}=\frac{\Delta x}{6}\left[ \begin{matrix}
			3-r & r   \\
			r   & 3-r \\
		\end{matrix} \right]
	\end{equation}
	where $\Delta x$ is the element length. A total of 100 equally spaced elements are used, and an external force $F(t) = 100$ N is suddenly applied from rest. The exact solution for this system can be found in~\cite{geradinMechanicalVibrationsTheory2015}.
	
	For simplicity, a lumped mass matrix is employed ($r = 0$ in \cref{eq:56}). All explicit algorithms are configured with the most dissipative setting and the time step is set to $\Delta t = \Omega_b / \omega_{\text{max}}$ to induce maximum numerical damping. 
	\cref{fig:133} presents the displacement and velocity responses at the midpoint of the rod, computed by various explicit algorithms.
	It can be observed that non-dissipative algorithms, such as the NE method~\cite{newmarkMethodComputationStructural1959}, fail to filter out the spurious high-frequency components introduced by spatial discretization, leading to pronounced numerical oscillations in the velocity response. First-order algorithms, including TSSE~\cite{liDevelopmentCompositeSubstep2021} and TW~\cite{maheoNumericalDampingSpurious2013}, perform poorly in displacement. However, due to their strong numerical damping, they effectively suppress oscillations—albeit at the cost of significant solution errors. The remaining second-order algorithms (EHHT-$\alpha$~\cite{mirandaImprovedImplicitExplicit1989} when $\xi=0$, GSSE~\cite{liIdenticalSecondOrder2021}, EWBZ-$\alpha$~\cite{hulbertExplicitTimeIntegration1996} and Noh-Bathe~\cite{nohExplicitTimeIntegration2013}) perform well when using a lumped mass matrix. In contrast, the third-order accurate algorithm SS2HE\textsubscript{1} performs poorly, exhibiting large numerical oscillations in the velocity response. The proposed \novelalgrefs{1}{2}, which incorporate controlled numerical dissipation, are able to filter out certain high-frequency artifacts. While some fluctuation remains in the velocity response, the solution progressively smooths over time.
		\begin{figure}[htbp]
		\centering
		\includegraphics[width=\textwidth]{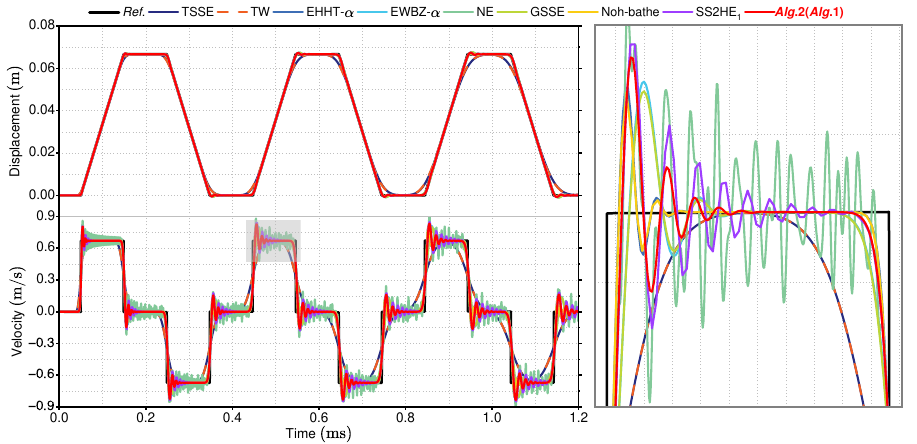}
		\caption{Midpoint displacement and velocity time histories of the bar shown in \cref{fig:12}, obtained via spatial discretization with $r=0$ and solved using various explicit schemes}
		\label{fig:133}
	\end{figure}
	\begin{figure}[htbp]
		\centering
		\includegraphics[width=\textwidth]{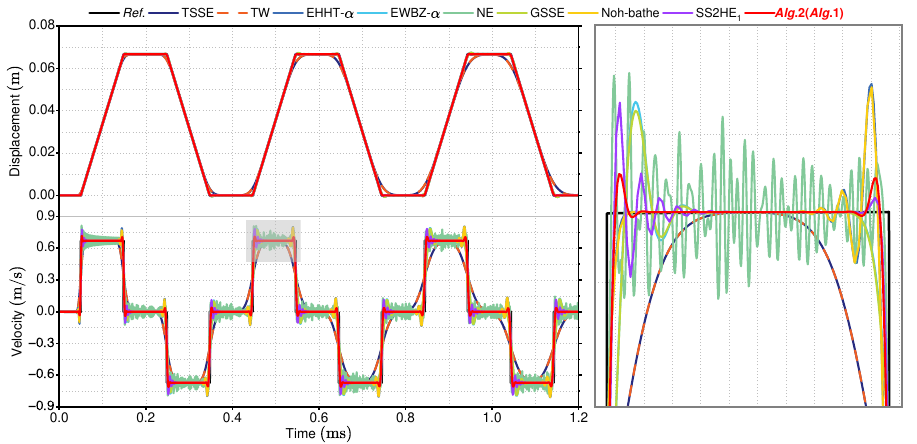}
		\caption{Midpoint displacement and velocity time histories of the bar shown in \cref{fig:12}, obtained via spatial discretization with $r=1/2$ and solved using various explicit schemes}
		\label{fig:13}
	\end{figure}
	
	As demonstrated in the study~\cite{liFurtherAssessmentThree2021}, the overall solution accuracy is influenced by the coupling errors between spatial and temporal discretization. To fully realize the potential of high-order time integration methods, they must be complemented by higher-order spatial discretization. Since the algorithms under consideration are third-order accurate, they should be used in conjunction with high-order mass matrices. Specifically, when $r = 1/2$, Eq.~\eqref{eq:56} yields a higher-order mass matrix.

	The displacement and velocity responses at the midpoint of the rod, using $r = 1/2$, are shown in \cref{fig:13}. Upon applying higher-order mass elements, second-order algorithms show little improvement, while the third-order algorithms SS2HE\textsubscript{1}~\cite{liTwoThirdorderExplicit2022}, \novelalgrefs{1}{2} demonstrate significant enhancements compared to the results in \cref{fig:133}. Specifically, SS2HE\textsubscript{1}~\cite{liTwoThirdorderExplicit2022} eliminates oscillations only at one end of the velocity step, whereas the proposed \novelalgrefs{1}{2} almost entirely remove all numerical oscillations, highlighting their strong advantage. This observation confirms that the inherent numerical dissipation of \novelalgrefs{1}{2} is not a drawback, but rather a beneficial feature.

	\subsection{Center-loaded square membrane}
	As illustrated in \cref{fig:wave2d}, the center-loaded square membrane, with a side length of 15 units, is commonly used to evaluate the futher numerical dissipation capabilities of algorithms. The governing partial differential equation for scalar wave propagation is given by
	\begin{equation}
		\frac{{{\partial }^{2}}u(x,\;y,\;t)}{\partial {{t}^{2}}}-{{c}_{0}}{{\nabla }^{2}}u(x,\;y,\;t)={{F}_{c}}(x,\;y,\;t)
	\end{equation}
	where $c_0$ is the wave speed, assumed to be $c_0 = 1$; $u(x, y, t)$ represents the displacement perpendicular to the plane of the membrane, and $F_c$ is the known external excitation function. Following the approach in \cref{sec:rod}, the governing equation is discretized spatially using finite elements, which transforms it into a system of ordinary differential equations in time. In the case of the two-dimensional problem, four-node rectangular elements are used. The element mass matrix and stiffness matrix are defined as $\bm{m}^{\text{e}} = \bm{m}^{\text{e}}(\alpha_m, r, \zeta, \Delta x)$ and ${\bm{k}^{\text{e}}} = \bm{k}^{\text{e}}(\alpha_k, r, \zeta)$, where $\Delta x$ represents the element width; $\zeta = \Delta y / \Delta x$ is the aspect ratio of the rectangular element; $r$ is the weighting parameter of the element mass matrix and $\alpha_k = \alpha_m = 1/\sqrt{3}$ are the Gauss quadrature points used in standard finite element.
	
	For simplicity, the parameters are set as $\zeta = 1$ and high-order mass elements with $r = 1/2$ are selected. The external excitation is defined as
	\begin{equation}
		{{F}_{c}}(0,\;0,\;t)=4\left[ 1-{{\left( 2t-1 \right)}^{2}} \right]\cdot H(1-t)
	\end{equation}
	where $H$ is the Heaviside step function. The analytical solution for this problem can be found in~\cite{yueDispersionreducingFiniteElements2005}.
	
	\begin{figure}[htbp]
		\centering
		\includegraphics[width=\textwidth]{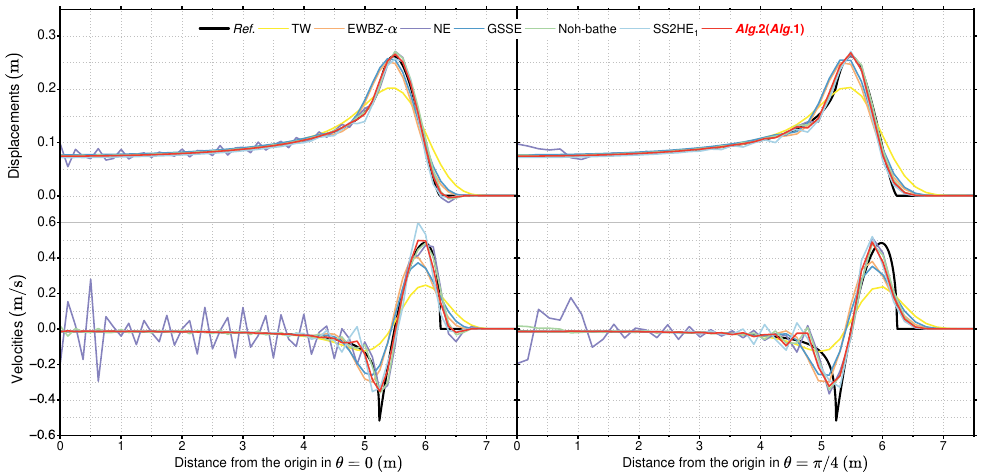}
		\caption{Response of 2D scalar wave shown in \cref{fig:wave2d} at $t=6.25\text{s}$ with $\alpha_k=1/\sqrt{3}, \mathrm{CFL=0.5}$ ($\mathrm{CFL=1}$ for two-sub-step algorithms) for different algorithms}
		\label{fig:14}
	\end{figure}
	To evaluate algorithmic performance, GSSE~\cite{liIdenticalSecondOrder2021}, NE~\cite{newmarkMethodComputationStructural1959}, TW~\cite{maheoNumericalDampingSpurious2013}, EWBZ-$\alpha$~\cite{hulbertExplicitTimeIntegration1996}, Noh-Bathe~\cite{nohExplicitTimeIntegration2013}, SS2E\textsubscript{1}~\cite{liTwoThirdorderExplicit2022} as well as the proposed \novelalgrefs{1}{2} are employed to solve the problem under a Courant–Friedrichs–Lewy (CFL) number of $\text{CFL} = {c_0 \Delta t}/{\Delta x} = 0.5$ (two-sub-step schemes adopt $\mathrm{CFL} = 1$ for comparability). Spatial discretization is performed using a $120 \times 120$ grid of rectangular elements. \cref{fig:14} presents the numerical solutions along the $\theta = 0$ and $\theta = \pi/4$ directions at $t = 6.25$ seconds.
	Among the methods tested, the zero-dissipation NE method~\cite{newmarkMethodComputationStructural1959} exhibits significant numerical oscillations in both displacement and velocity, due to the presence of spurious high-frequency components. TW~\cite{maheoNumericalDampingSpurious2013} is implemented with ${\rho_b} = 0.5$ while all other methods use ${\rho_b} = 0$. Despite the moderate numerical damping, TW~\cite{maheoNumericalDampingSpurious2013} shows notable amplitude errors. Similarly, the Noh-Bathe method~\cite{nohExplicitTimeIntegration2013} and SS2E\textsubscript{1}~\cite{liTwoThirdorderExplicit2022} algorithms also suffer from pronounced oscillations in velocity field (see \cref{fig:noh} for details of the oscillations).
	In contrast, the GSSE~\cite{liIdenticalSecondOrder2021}, EWBZ-$\alpha$~\cite{hulbertExplicitTimeIntegration1996} and the proposed \novelalgrefs{1}{2} demonstrate effective suppression of high-frequency noise in both displacement and velocity fields. However, it is worth noting that GSSE~\cite{liIdenticalSecondOrder2021} and EWBZ-$\alpha$~\cite{hulbertExplicitTimeIntegration1996} exhibit relatively larger velocity errors far from the origin. Furthermore, they show discrepancies between $\theta = 0$ and $\theta = \pi/4$ directions, indicating a lack of isotropy in their solutions.
	By comparison, only \novelalgrefs{1}{2} not only filter out high-frequency oscillations effectively but also preserve isotropy in the solution, delivering consistently accurate results without introducing significant directional bias.
	
	As discussed in \cref{sec:rod}, high-order time integration algorithms should be paired with appropriate spatial discretization techniques to minimize coupling errors and achieve more accurate numerical solutions. Unlike one-dimensional elements, in this case, it is essential not only to set the element mass matrix with $r = 1/2$, but also to adjust the Gauss quadrature points in the element stiffness matrix to eliminate anisotropy in the solution. Specifically, ${\alpha_k} = \sqrt{2/3}$ is required~\cite{liFurtherAssessmentThree2021}.
	\cref{fig:15} illustrates that when ${\alpha_k} = \sqrt{2/3}$, high-frequency oscillations are significantly suppressed, and all algorithms exhibit improved isotropy in the displacement and velocity curves along both the $\theta = \pi /4$ and $\theta = 0$ directions. Among the tested methods, \novelalgrefs{1}{2} deliver the best results in both displacement and velocity.

\begin{figure}[htbp]
	\centering
	\includegraphics[width=\textwidth]{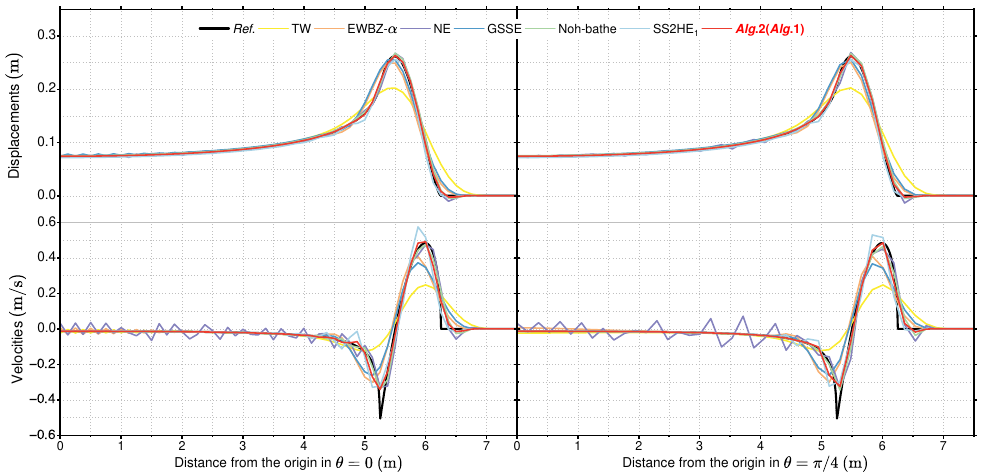}
	\caption{Response of 2D scalar wave shown in \cref{fig:wave2d} at $t=6.25\text{s}$ with $\alpha_k=2/\sqrt{3}, \mathrm{CFL=0.5}$ ($\mathrm{CFL=1}$ for two-sub-step algorithms) for different algorithms}
	\label{fig:15}
\end{figure}

	\begin{figure}[htbp]
		\centering
		\begin{subfigure}[h]{0.22\textwidth}
			\centering
			\includegraphics[width=\textwidth]{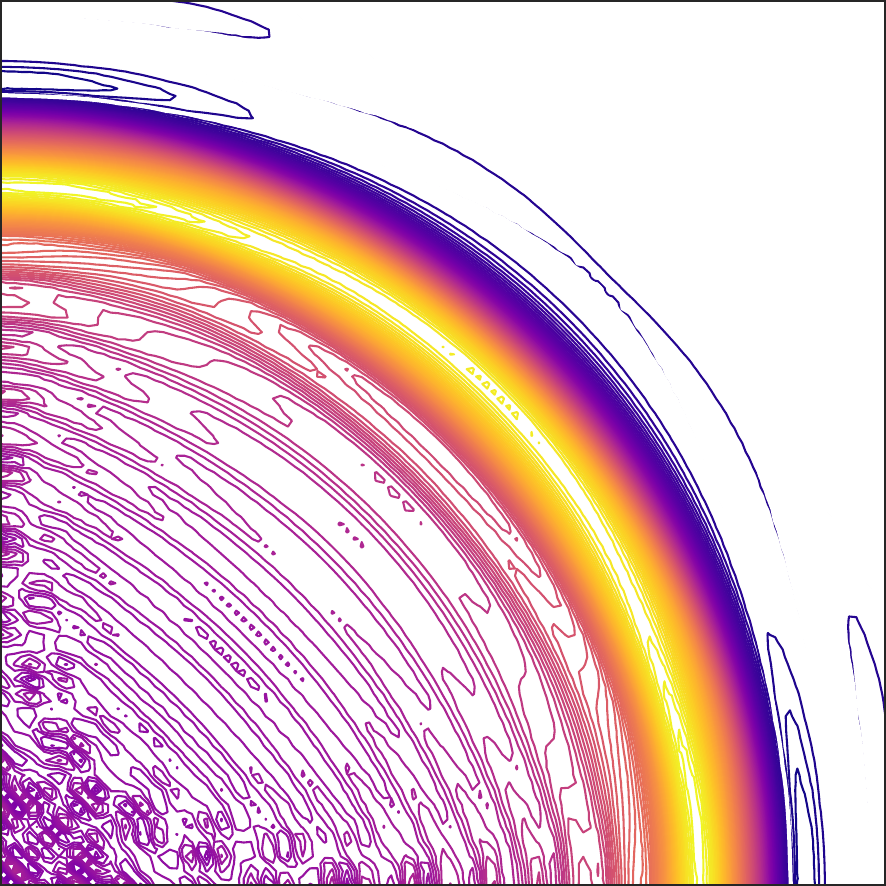}
			\caption{NE\cite{newmarkMethodComputationStructural1959} with $\alpha_k=\sqrt{1/3}$}
		\end{subfigure}
		\begin{subfigure}[h]{0.22\textwidth}
			\centering
			\includegraphics[width=\textwidth]{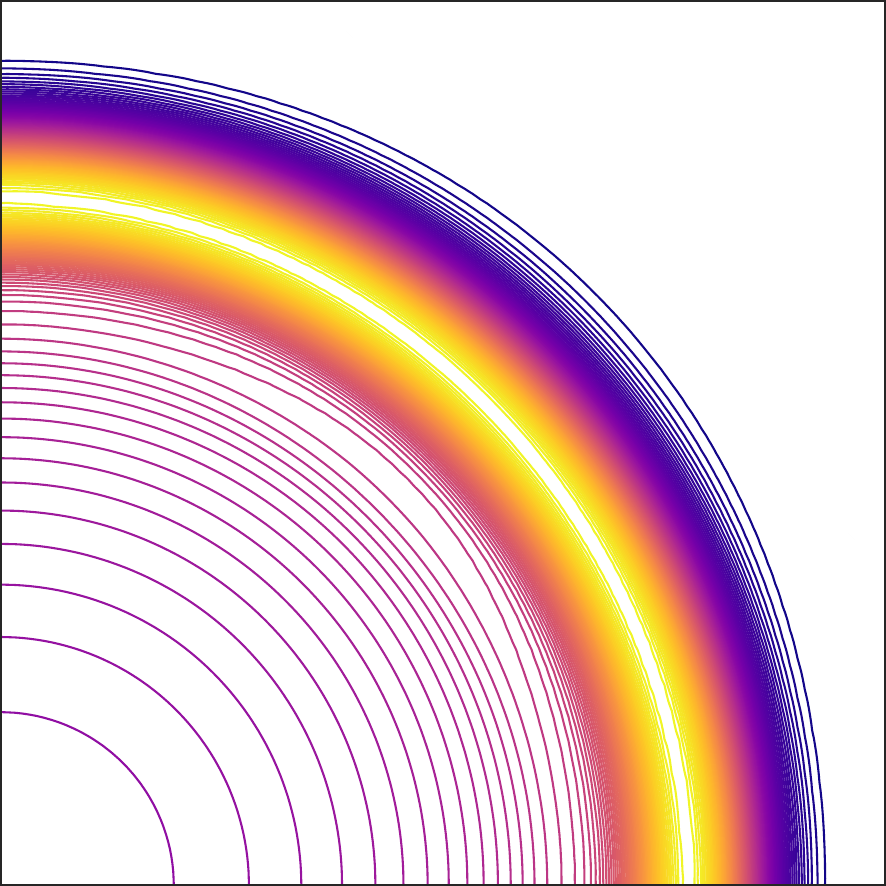}
			\caption{GSSE~\cite{liIdenticalSecondOrder2021} with $\alpha_k=\sqrt{1/3}$}
		\end{subfigure}
		\begin{subfigure}[h]{0.22\textwidth}
			\centering
			\includegraphics[width=\textwidth]{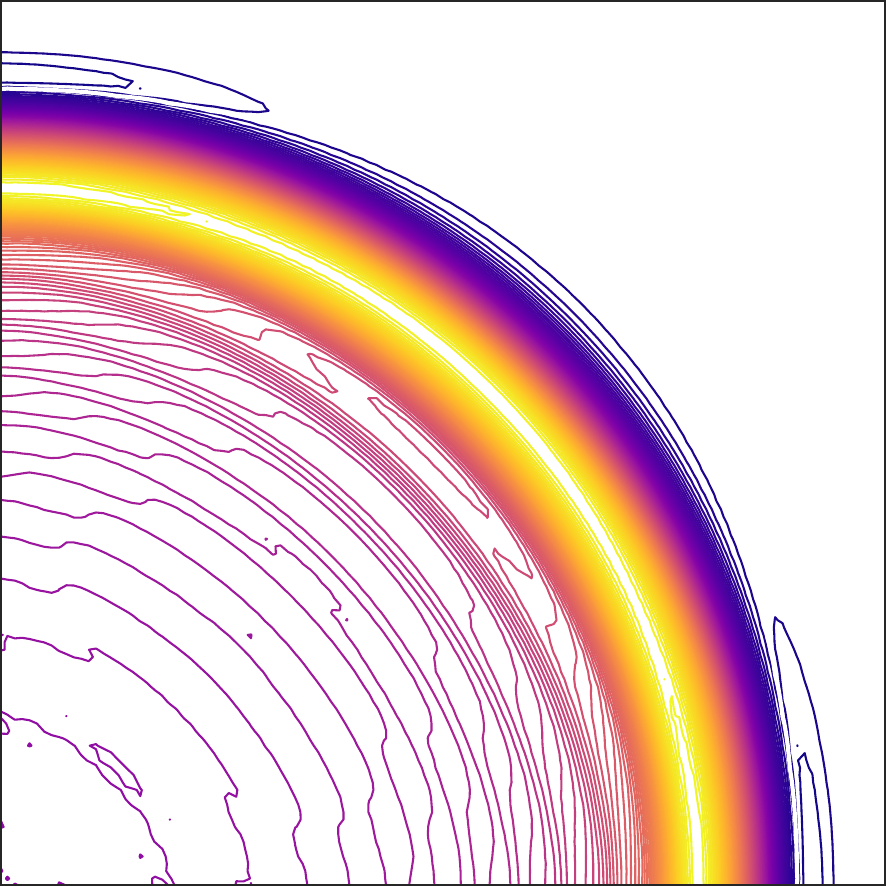}
			\caption{Noh-Bathe~\cite{nohExplicitTimeIntegration2013} with $\alpha_k=\sqrt{1/3}$}
		\end{subfigure}
		\begin{subfigure}[h]{0.22\textwidth}
			\centering
			\includegraphics[width=\textwidth]{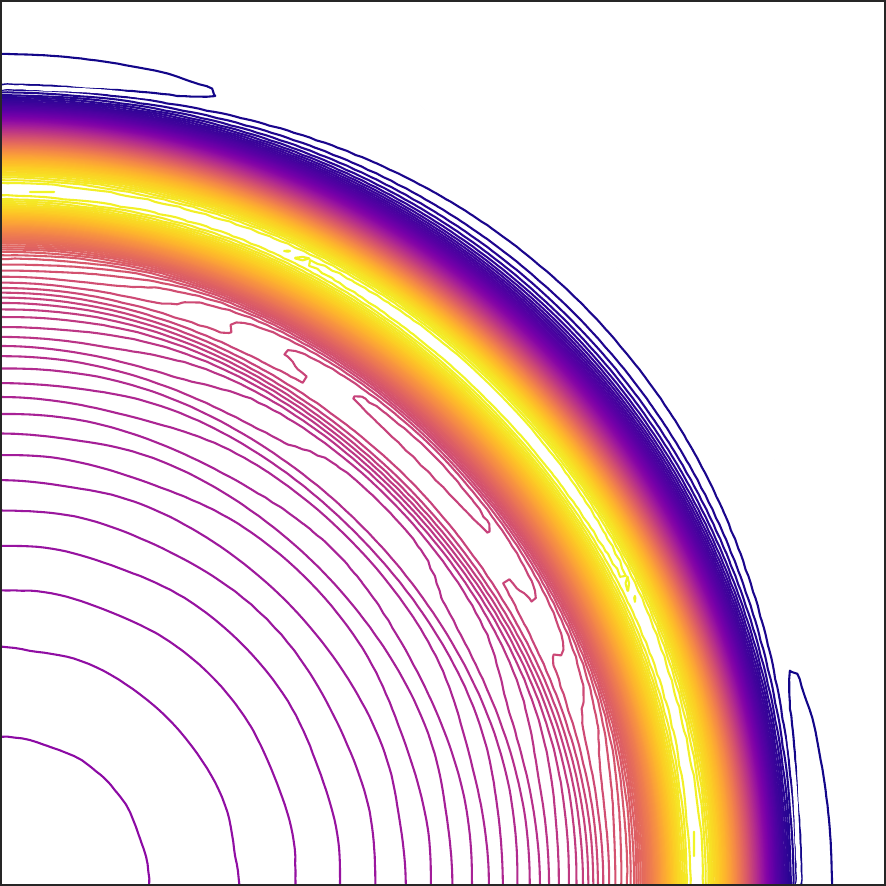}
			\caption{\novelalgrefss{1}{2} with $\alpha_k=\sqrt{1/3}$}
		\end{subfigure}
		\\
		\begin{subfigure}[h]{0.22\textwidth}
			\centering
			\includegraphics[width=\textwidth]{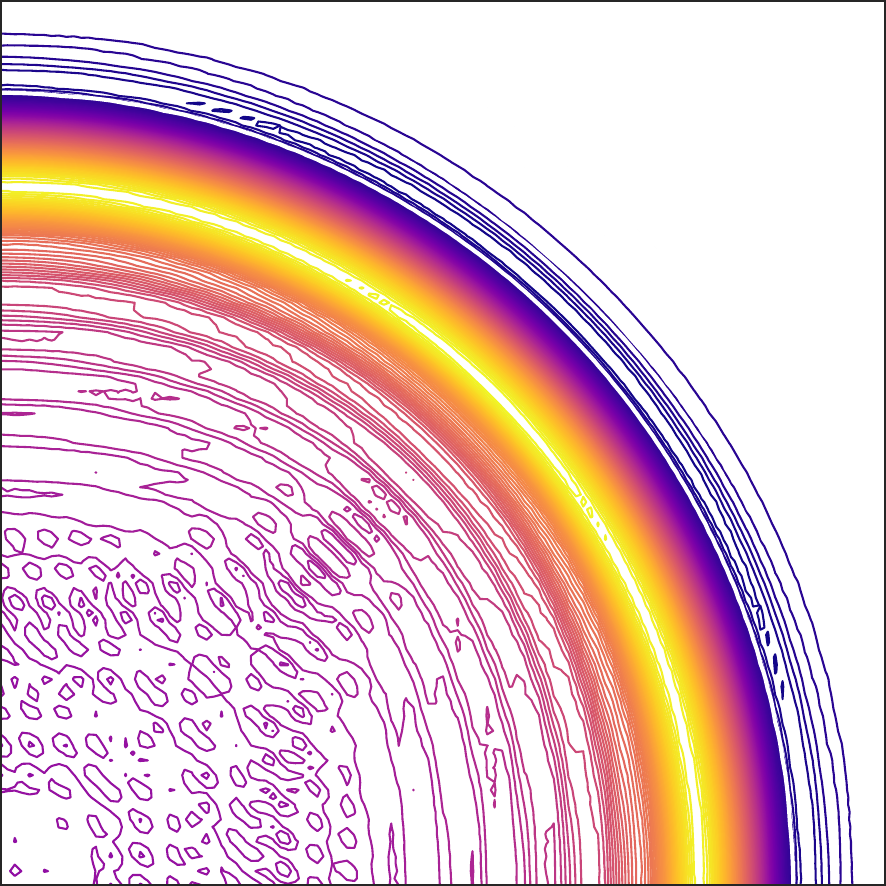}
			\caption{NE~\cite{newmarkMethodComputationStructural1959} with $\alpha_k=\sqrt{2/3}$}
		\end{subfigure}
		\begin{subfigure}[h]{0.22\textwidth}
			\centering
			\includegraphics[width=\textwidth]{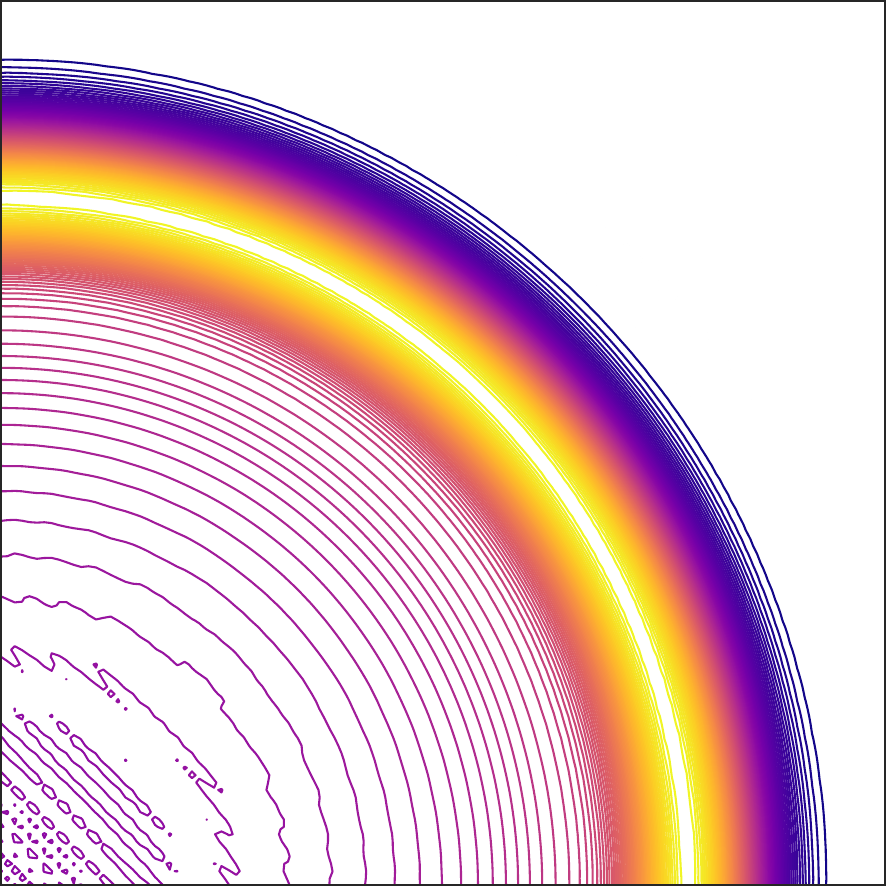}
			\caption{GSSE~\cite{liIdenticalSecondOrder2021} with $\alpha_k=\sqrt{2/3}$}
		\end{subfigure}
		\begin{subfigure}[h]{0.22\textwidth}
			\centering
			\includegraphics[width=\textwidth]{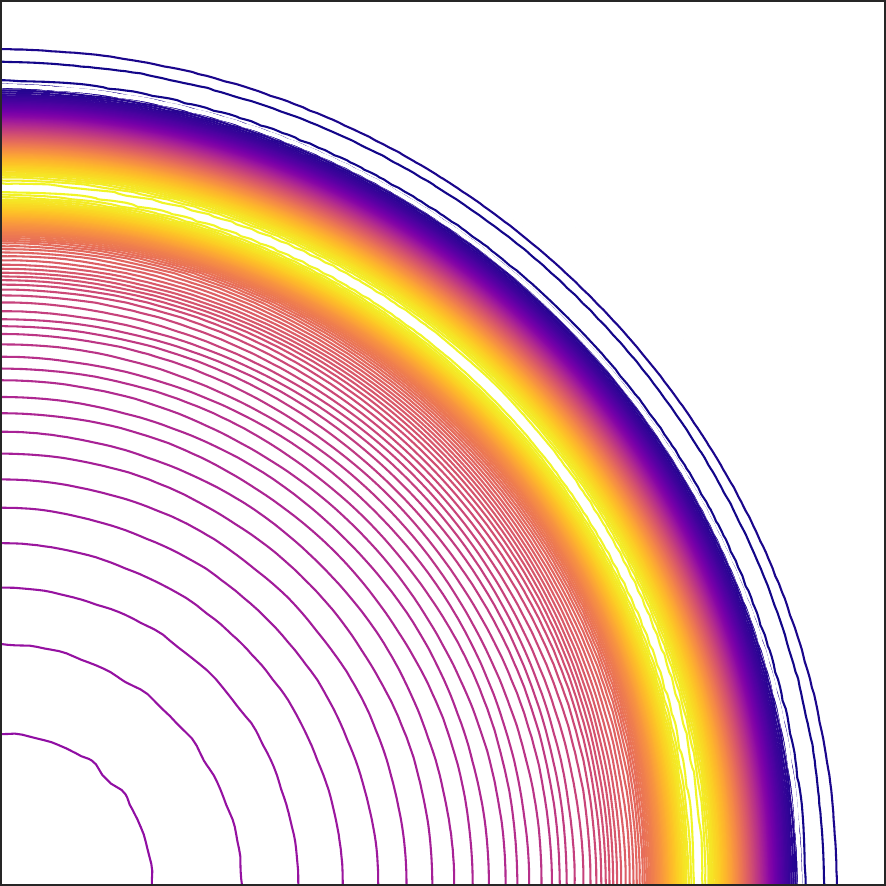}
			\caption{Noh-Bathe~\cite{nohExplicitTimeIntegration2013} with ${\alpha_k=\sqrt{2/3}}$}
		\end{subfigure}
		\begin{subfigure}[h]{0.22\textwidth}
			\centering
			\includegraphics[width=\textwidth]{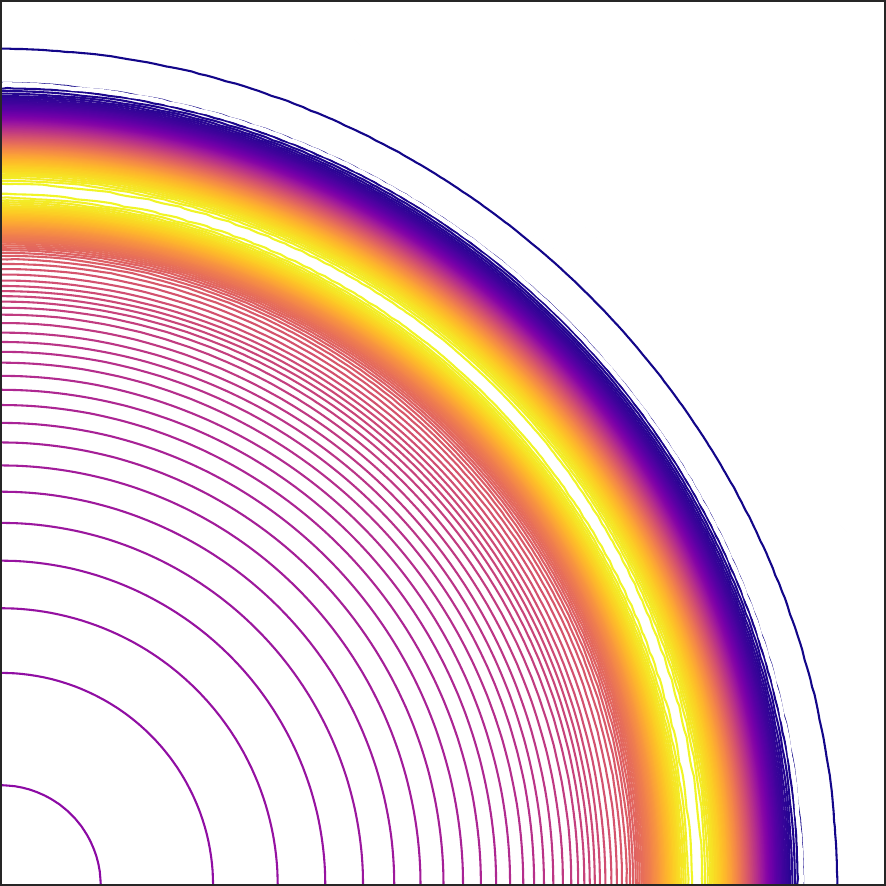}
			\caption{\novelalgrefss{1}{2} with $\alpha_k=\sqrt{2/3}$}
		\end{subfigure}
		\caption{Displacement contour plots of a 2D scalar wave computed by various explicit algorithms ($\mathrm{CFL=0.5}$ for single-solve algorithms and $\mathrm{CFL=1}$ for Noh-Bathe~\cite{nohExplicitTimeIntegration2013}) using $\alpha_k = \sqrt{1/3}$ and $\alpha_k = \sqrt{2/3}$}
		\label{fig:16}
	\end{figure}
	
	\begin{figure}[htbp]
		\centering
		\begin{subfigure}[h]{0.22\textwidth}
			\centering
			\includegraphics[width=\textwidth]{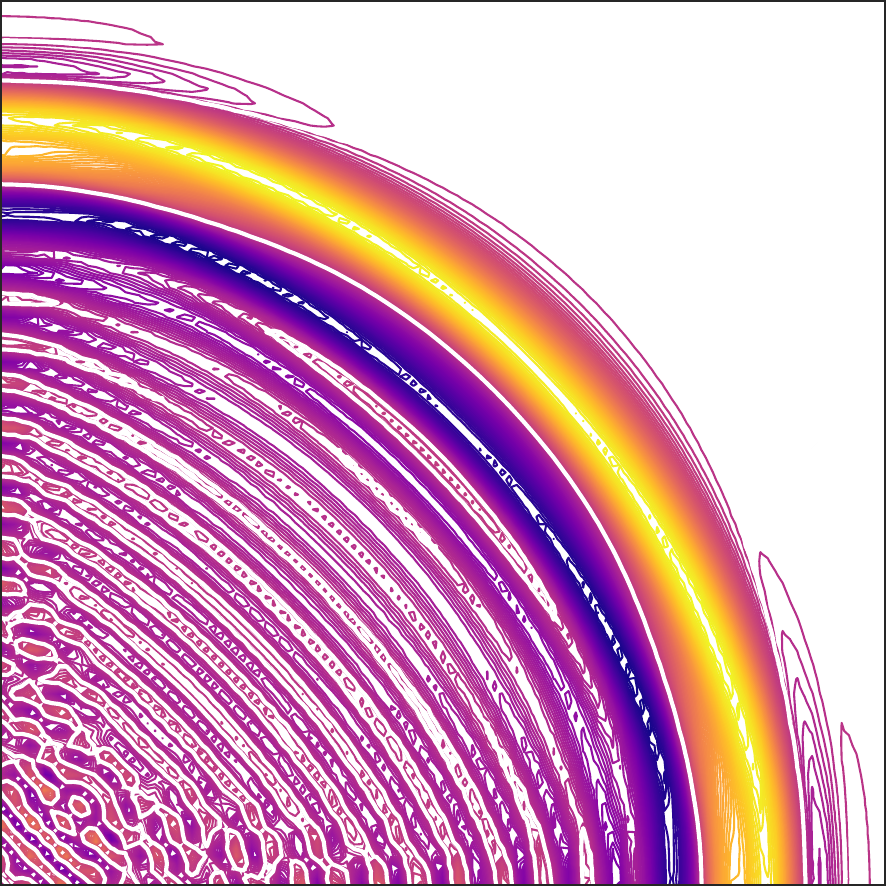}
			\caption{NE~\cite{newmarkMethodComputationStructural1959} with $\alpha_k=\sqrt{1/3}$}
		\end{subfigure}
		\begin{subfigure}[h]{0.22\textwidth}
			\centering
			\includegraphics[width=\textwidth]{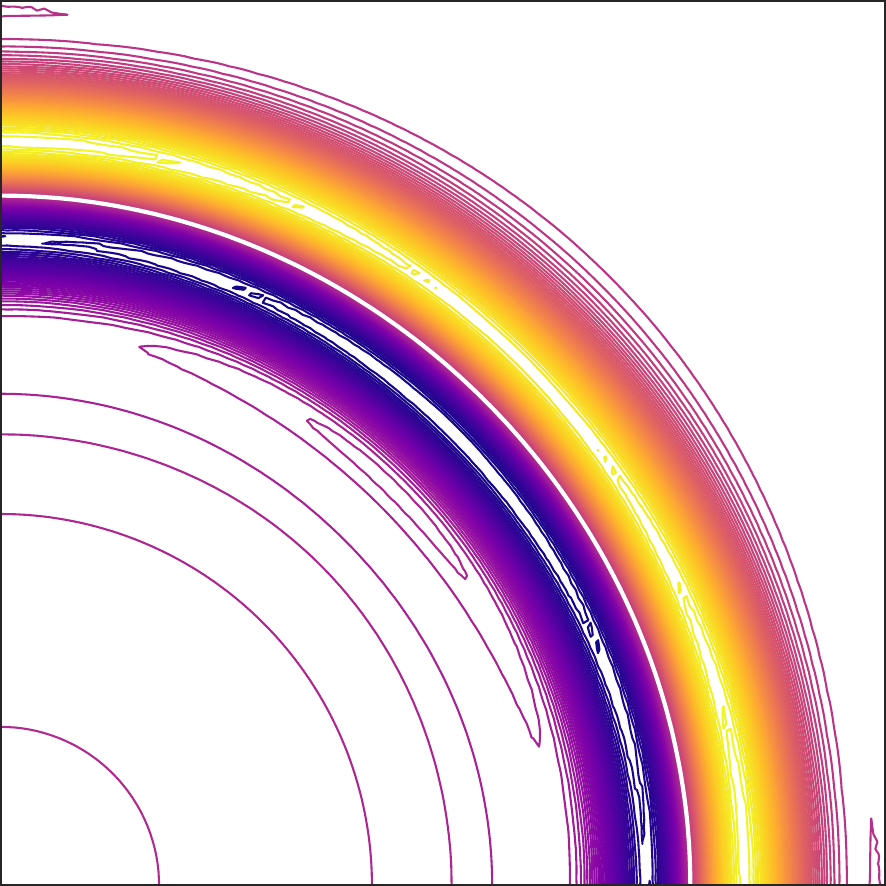}
			\caption{GSSE~\cite{liIdenticalSecondOrder2021} with $\alpha_k=\sqrt{1/3}$}
		\end{subfigure}
		\begin{subfigure}[h]{0.22\textwidth}
			\centering
			\includegraphics[width=\textwidth]{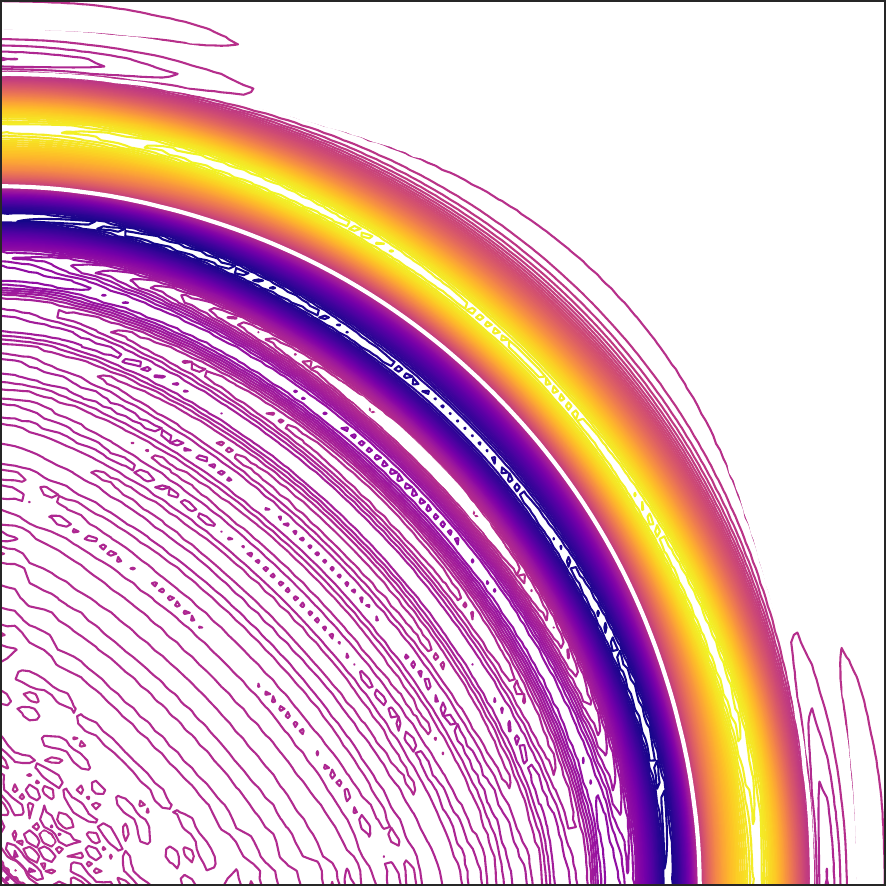}
			\caption{Noh-Bathe~\cite{nohExplicitTimeIntegration2013} with $\alpha_k=\sqrt{1/3}$}
			\label{fig:noh}
		\end{subfigure}
		\begin{subfigure}[h]{0.22\textwidth}
			\centering
			\includegraphics[width=\textwidth]{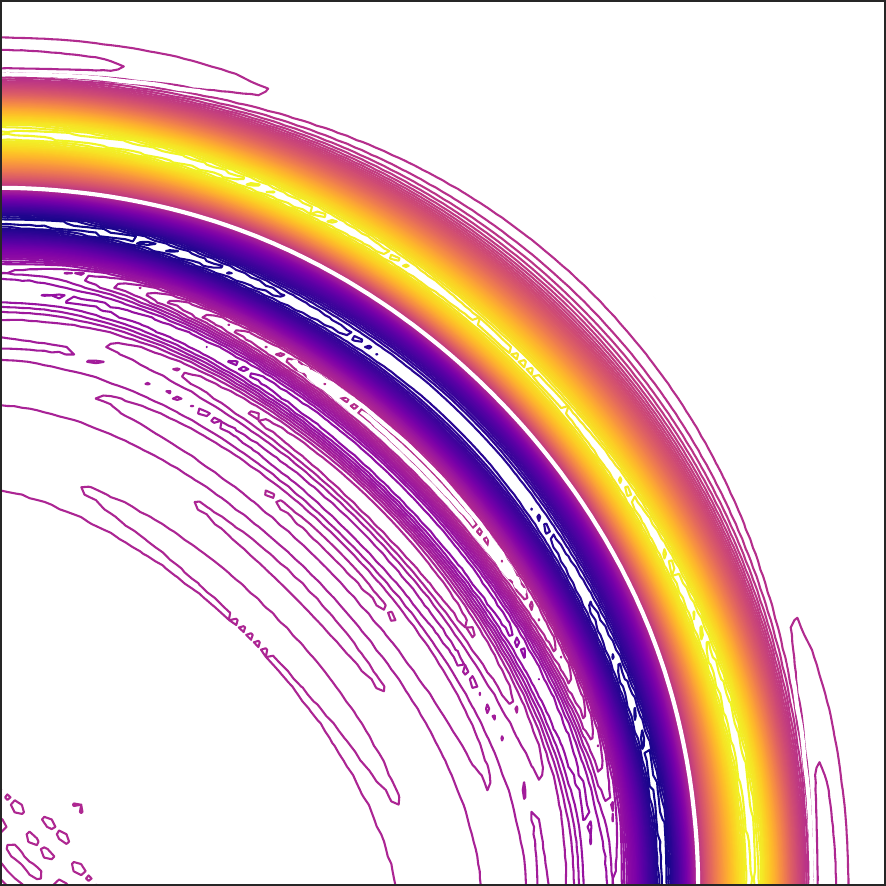}
			\caption{\novelalgrefss{1}{2} with $\alpha_k=\sqrt{1/3}$}
		\end{subfigure}
		\\
		\begin{subfigure}[h]{0.22\textwidth}
			\centering
			\includegraphics[width=\textwidth]{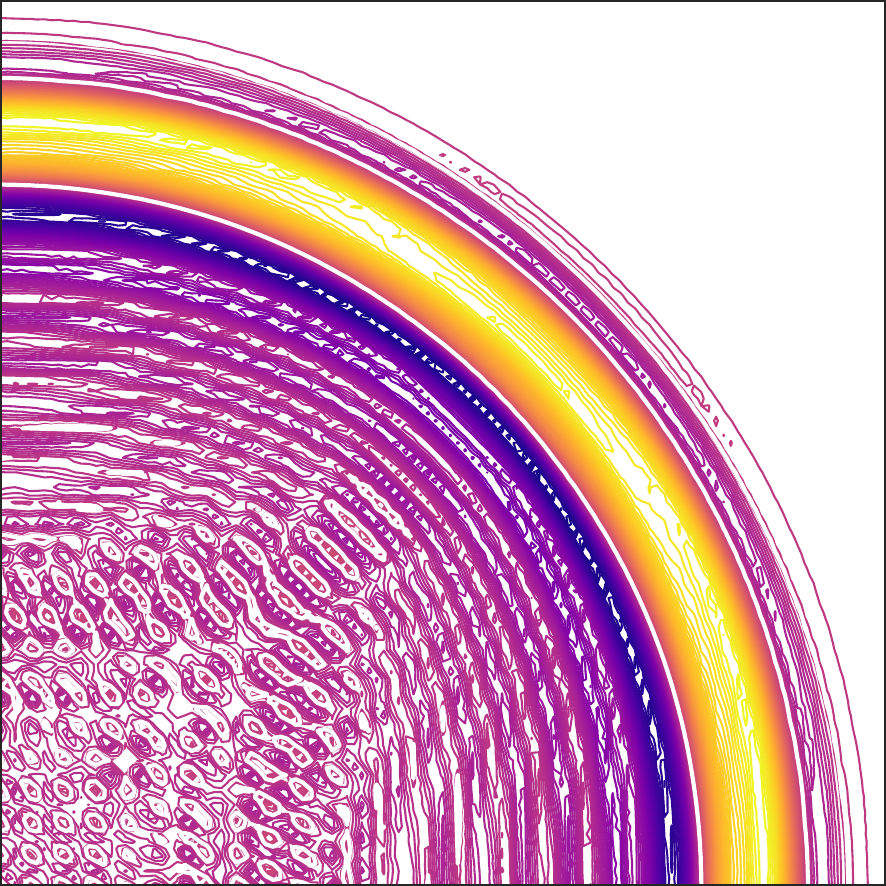}
			\caption{NE~\cite{newmarkMethodComputationStructural1959} with $\alpha_k=\sqrt{2/3}$}
		\end{subfigure}
		\begin{subfigure}[h]{0.22\textwidth}
			\centering
			\includegraphics[width=\textwidth]{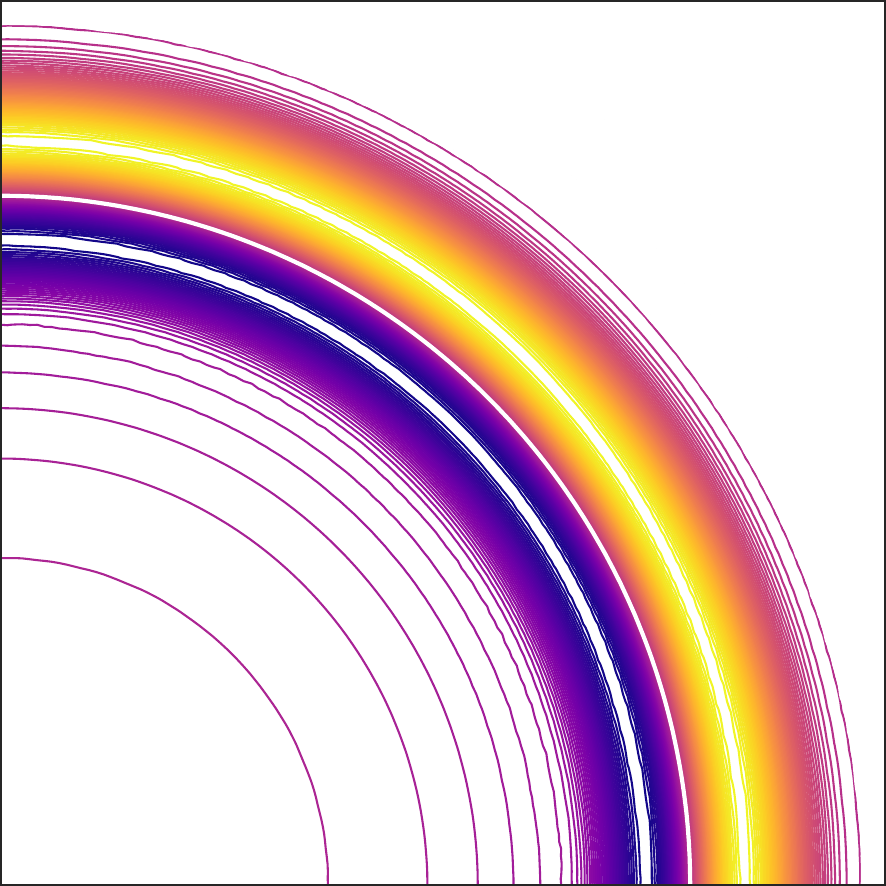}
			\caption{GSSE~\cite{liIdenticalSecondOrder2021} with $\alpha_k=\sqrt{2/3}$}
		\end{subfigure}
		\begin{subfigure}[h]{0.22\textwidth}
			\centering
			\includegraphics[width=\textwidth]{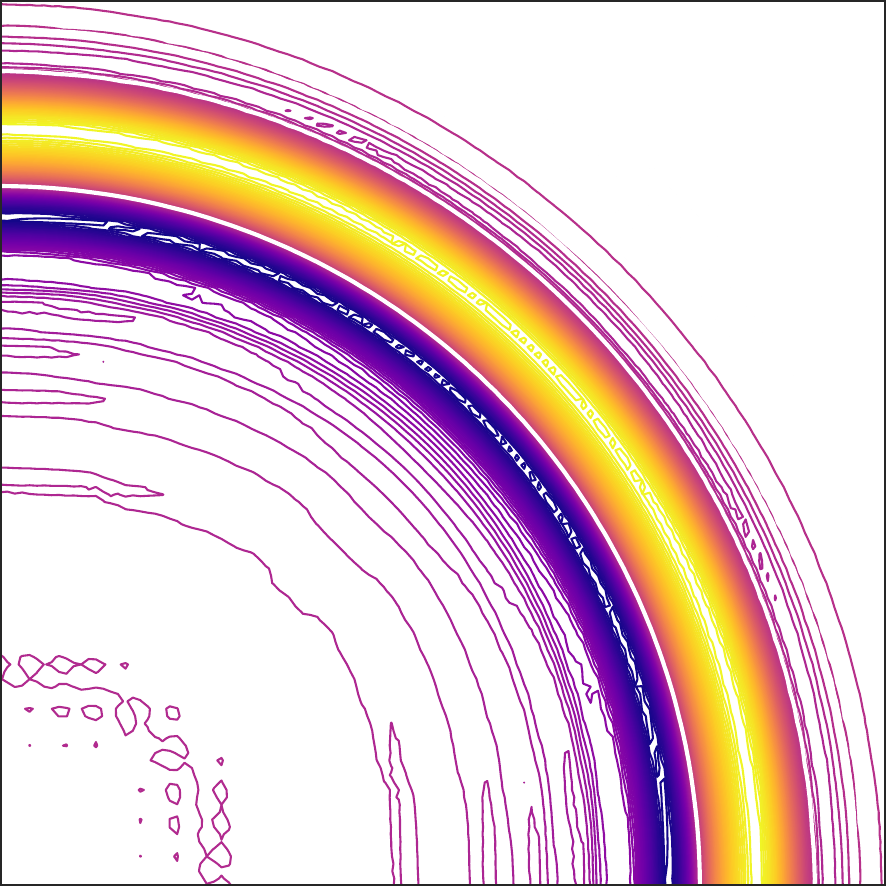}
			\caption{Noh-Bathe~\cite{nohExplicitTimeIntegration2013} with $\alpha_k=\sqrt{2/3}$}
		\end{subfigure}
		\begin{subfigure}[h]{0.22\textwidth}
			\centering
			\includegraphics[width=\textwidth]{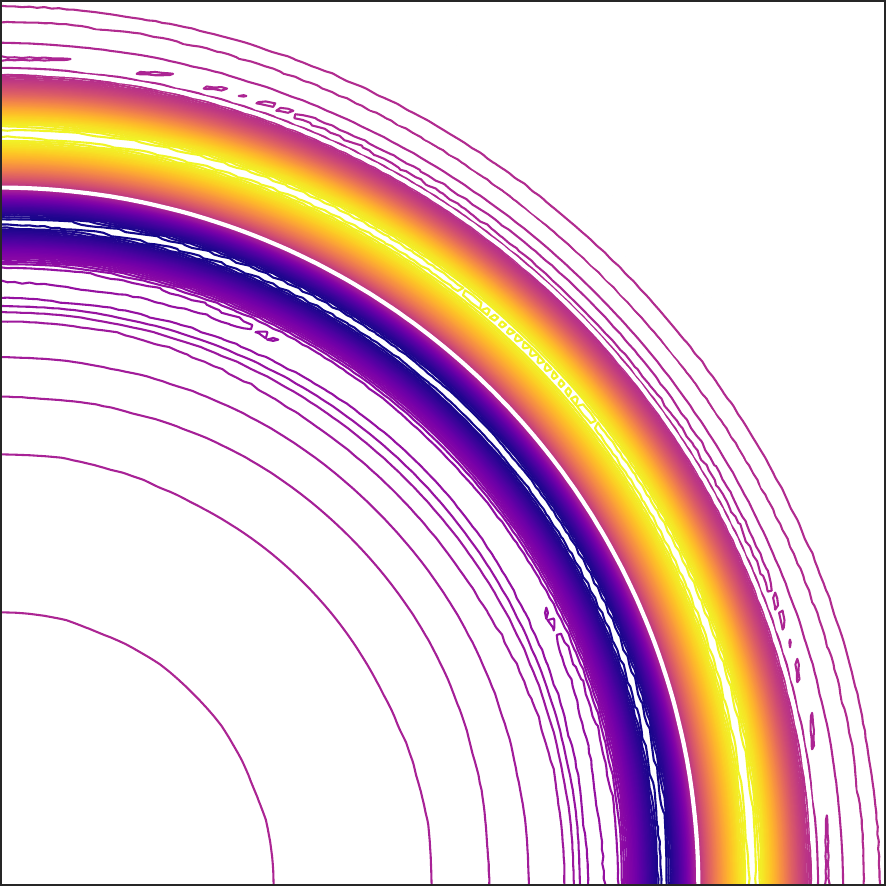}
			\caption{\novelalgrefss{1}{2} with $\alpha_k=\sqrt{2/3}$}
		\end{subfigure}
		\caption{Velocity contour plots of a 2D scalar wave computed by various explicit algorithms ($\mathrm{CFL=0.5}$ for single-solve algorithms and $\mathrm{CFL=1}$ for Noh-Bathe~\cite{nohExplicitTimeIntegration2013}) using $\alpha_k = \sqrt{1/3}$ and $\alpha_k = \sqrt{2/3}$}
		\label{fig:17}
	\end{figure}
		
	To further visualize the exceptional isotropy of \novelalgrefs{1}{2}, \cref{fig:16,fig:17} present the displacement and velocity contour plots predicted by different algorithms (only one-quarter of the solution region is plotted due to symmetry). The results show that, in terms of displacement, a combination of \cref{fig:14,fig:15,fig:16} leads to the conclusion that when $\alpha_k = 1/\sqrt{3}$, GSSE~\cite{liIdenticalSecondOrder2021} exhibits the best isotropy. However, its solution accuracy is not as high as that of \novelalgrefs{1}{2}. When $\alpha_k = \sqrt{2/3}$, the two-sub-step Noh-Bathe method~\cite{nohExplicitTimeIntegration2013} exhibits improved isotropy, but it still falls short of \novelalgrefs{1}{2}, which not only maintain perfect isotropy but also deliver the most accurate solution.
	A similar trend appears in the velocity responses shown in \cref{fig:14,fig:15,fig:17}. NE~\cite{newmarkMethodComputationStructural1959} and Noh-Bathe methods~\cite{nohExplicitTimeIntegration2013} perform very poorly when $\alpha_k = \sqrt{1/3}$. For $\alpha_k = \sqrt{2/3}$, the Noh-Bathe method~\cite{nohExplicitTimeIntegration2013} slightly reduces anisotropy and oscillation amplitude. In contrast, \novelalgrefs{1}{2} not only achieve perfect isotropy in velocity at $\alpha_k = \sqrt{2/3}$ but also deliver the most accurate results.
	
	Overall, the results from this test, along with those in \cref{sec:rod}, highlight the significant advantage of \novelalgrefs{1}{2} in handling semi-discretized equations with spurious high-frequency components.
	
	\subsection{Gap nonlinearity in folding rudder}
	
	In this section, a numerical simulation is performed on a large-scale, complex engineering structure—the folding rudder—which exhibits strong gap-induced nonlinear behavior due to its mechanical connections. The geometric model of the folding rudder shown in \cref{fig:18a}, illustrates the assembly between the rudder shaft and the rudder surface. The four contact points at the root, highlighted in blue, represent the mechanical connections between these two components. Due to manufacturing tolerances, gaps exist at these connections, leading to substantial nonlinear effects during motion. For a detailed mathematical model and analysis of gap nonlinearity in folding rudders, please refer to the article~\cite{NonlinearAeroelasticAnalysis2020}.

	\begin{figure}[h]
		\centering
		\begin{subfigure}[b]{0.49\textwidth}
			\centering
			\includegraphics[width=0.9\textwidth]{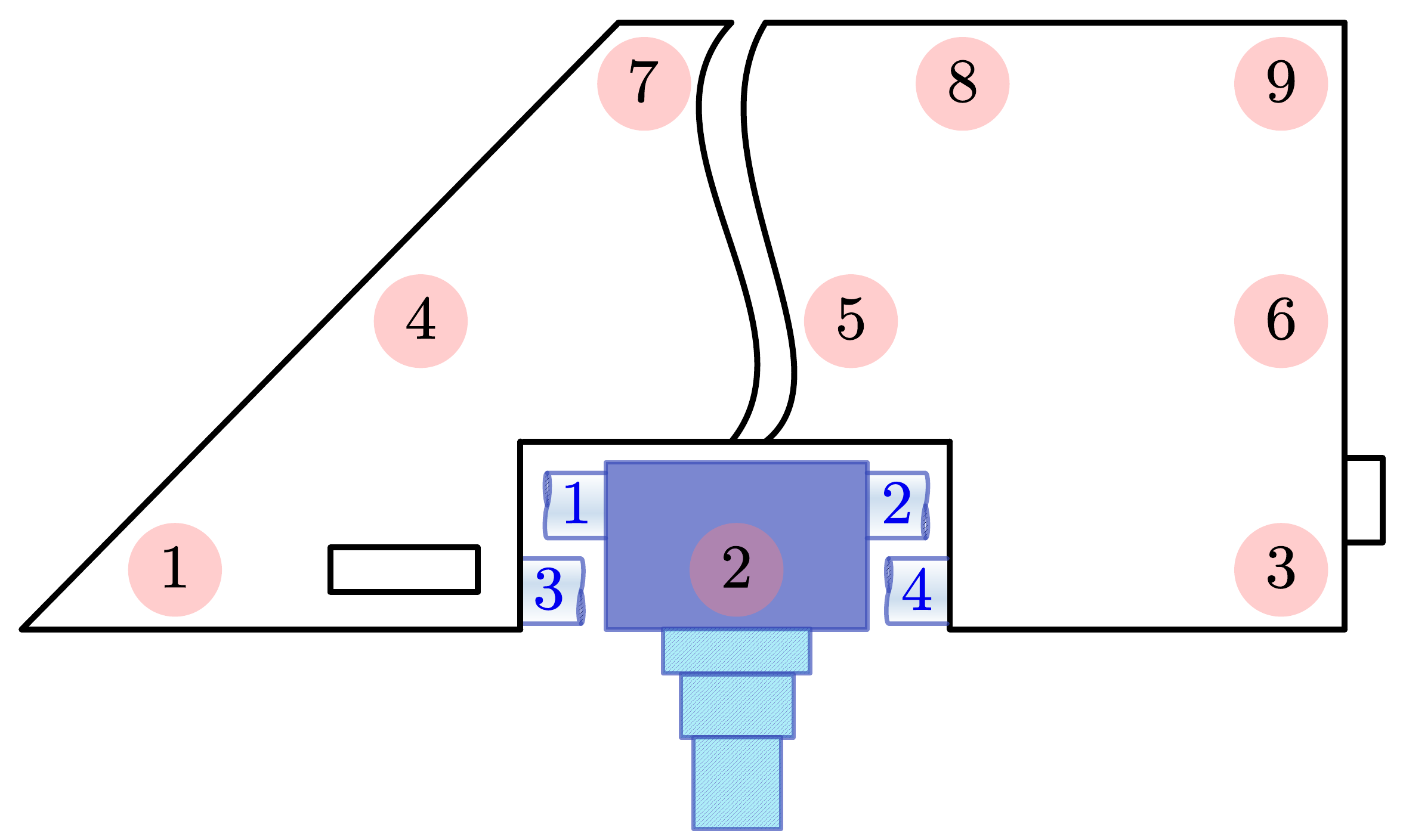}
			\caption{Geometric model of the gap folding rudder}
			\label{fig:18a}
		\end{subfigure}
		\begin{subfigure}[b]{0.49\textwidth}
			\centering
			\includegraphics[width=0.8\textwidth]{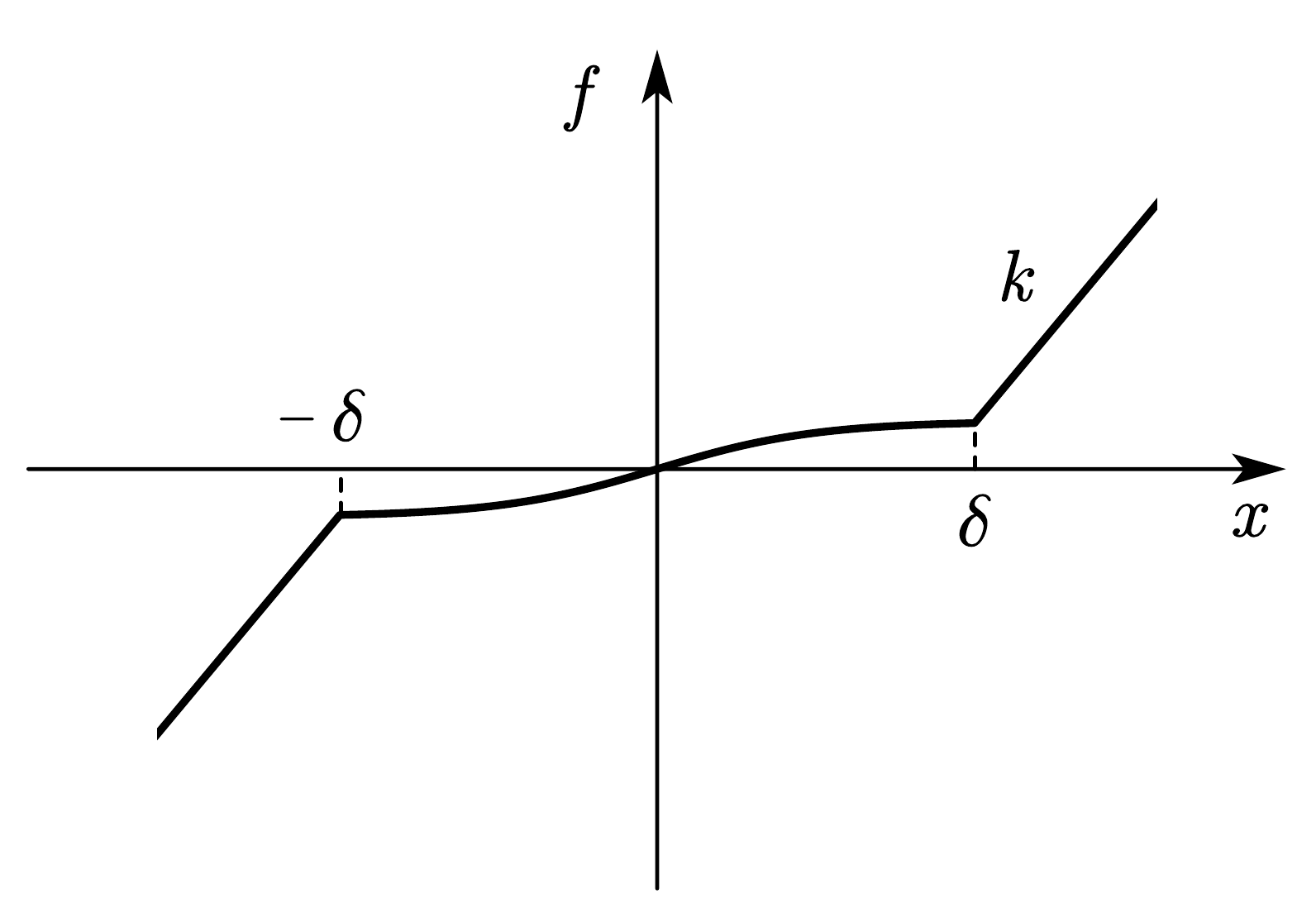}
			\caption{Gap nonlinearity constitutive relationship}
			\label{fig:18b}
		\end{subfigure}
		\caption{Geometric schematic of the gap folding rudder and its nonlinear constitutive relationship}
	\end{figure}
	
	The four contact nonlinearities are modeled equivalently as four gap springs. After calibration using experimental results, the mathematical model for numerical simulations is illustrated in \cref{fig:18b}. When the local relative displacement is smaller than the gap $\delta$, the equivalent stiffness decreases as the amplitude increases. Once the motion exceeds the gap, the stiffness increases accordingly. A sinusoidal sweep excitation is applied, and different explicit time integration algorithms are employed. For clarity, the displacement at Pin Point 1 in \cref{fig:18a} is chosen for comparison and visualization. It should be noted that this model represents a practical engineering application and, after finite element meshing, possesses a very high number of degrees of freedom; for visualization purposes, only the root‐node degrees of freedom are plotted and compared.
	
		\begin{figure}[htbp]
		\centering
		\includegraphics[width=\linewidth]{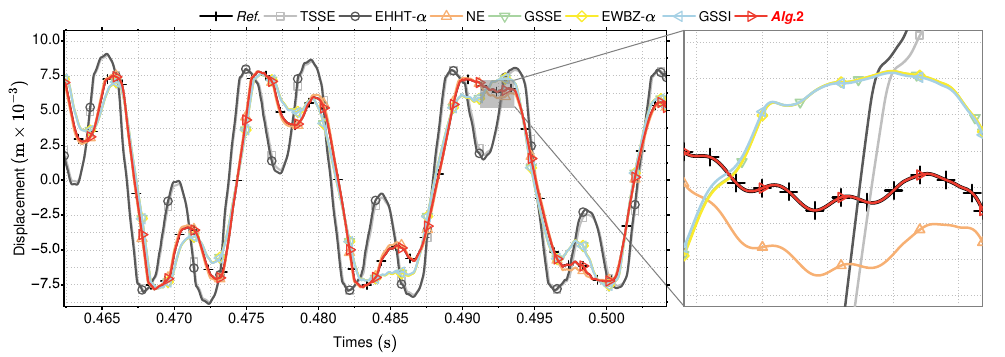}
		\caption{Time-domain response at pin point 1 of the gap folding rudder computed by different self-starting single-solve explicit algorithms}
		\label{fig:19}
	\end{figure}
	
	To begin, various self-starting single-solve explicit algorithms are used to simulate the system, employing the highly dissipative member of each scheme with a time step of $\Delta t = 2 \times 10^{-6}$ seconds. The resulting time-domain responses are shown in \cref{fig:19}, with a reference solution obtained using a fourth-order explicit Runge-Kutta method ($\Delta t = 10^{-9}$ seconds). The results demonstrate that, for such a complex, high-degree-of-freedom nonlinear system, all published single-solve explicit algorithms—whether first- or second-order accurate, fully explicit or velocity-implicit, exhibit significant errors in the time-domain response. In contrast, the proposed algorithm \novelalgref{2} accurately matches the reference solution with high fidelity.
	\begin{figure}[H]
		\centering
		\includegraphics[width=\linewidth]{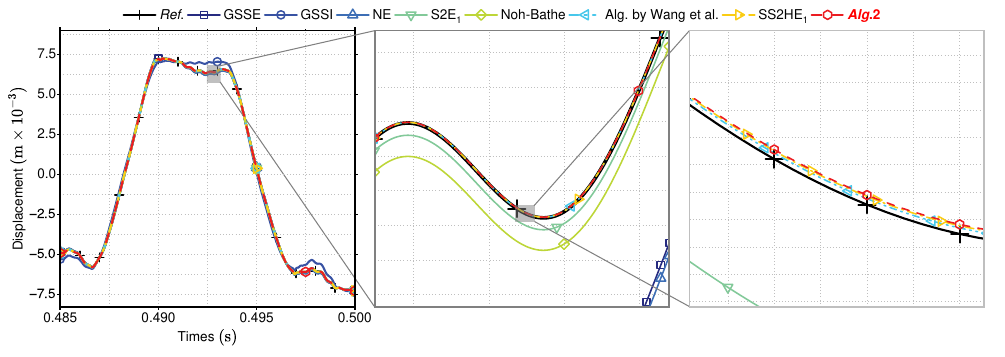}
		\caption{Time-domain response at pin point 1 of the gap folding rudder computed by different self-starting explicit algorithms including two-sub-step schemes}
		\label{fig:22}
	\end{figure}
	
	To highlight the third-order convergence of \novelalgref{2}, a finer time step of $\Delta t = 1 \times 10^{-6}$ is selected for further simulations. In this case, two-sub-step explicit algorithms are also included for comparison. The time-domain displacement at Pin Point 1 is plotted in \cref{fig:22}.
	The results show that the errors of single-solve algorithms (GSSE~\cite{liIdenticalSecondOrder2021}, GSSI~\cite{zhaoSelfstartingDissipativeAlternative2023}, NE~\cite{newmarkMethodComputationStructural1959}) remain substantial. Even the two-sub-step methods (S2E\textsubscript{1}~\cite{liDevelopmentCompositeSubstep2021} and Noh-Bathe~\cite{nohExplicitTimeIntegration2013}) deviate from the reference solution, despite their higher computational cost. In contrast, the third-order algorithms, the proposed algorithm by Wang et al.~\cite{wang_GeneralizedSinglestepMultistage_2025}, SS2HE\textsubscript{1}~\cite{liTwoThirdorderExplicit2022} and \novelalgref{2},  produce solutions that nearly coincide with the reference trajectory.

	In computational mechanics, the trade-off between computational cost and accuracy is a critical consideration. An algorithm that achieves higher accuracy at the same or lower cost is highly advantageous and more likely to be adopted in practical engineering applications. To clearly illustrate this advantage, \cref{fig:20} summarizes the efficiency of different time integrators in simulating this complex nonlinear structure, comparing both error and computational cost. Computational cost is quantified by the CPU runtime under identical hardware and software environments, with repeated measurements to ensure reliability. Errors are evaluated against the reference solution using Eq.~\eqref{eq:51}.
	Given the practical relevance of this structure, two classical implicit schemes (TPO/G-$\alpha$~\cite{shaoThree,chungTimeIntegrationAlgorithm1993} and Newmark-$\beta$ methods~\cite{newmarkMethodComputationStructural1959}) are also included for comparison. The results demonstrate the superiority of explicit algorithms in time-domain simulations of nonlinear dynamics. As shown in \cref{fig:20}, among self-starting single-solve methods, the proposed \novelalgref{2} significantly reduces numerical error while maintaining comparable computational cost.
	In comparison with two-sub-step methods, \novelalgref{2} yields smaller errors than both the Noh-Bathe method~\cite{nohExplicitTimeIntegration2013} and S2E\textsubscript{1}~\cite{liDevelopmentCompositeSubstep2021}. Furthermore, it provides accuracy on par with the algorithms proposed by Wang et al.~\cite{wang_GeneralizedSinglestepMultistage_2025} and SS2HE\textsubscript{1}~\cite{liTwoThirdorderExplicit2022}, but with only half the computational cost.
	
	In summary, \novelalgref{2} demonstrates superior computational accuracy and minimal cost in a real-world engineering application, highlighting its significant practical potential as a high-performance time integration method.
	
	\begin{figure}[ht]
		\centering
		\includegraphics[width=\linewidth]{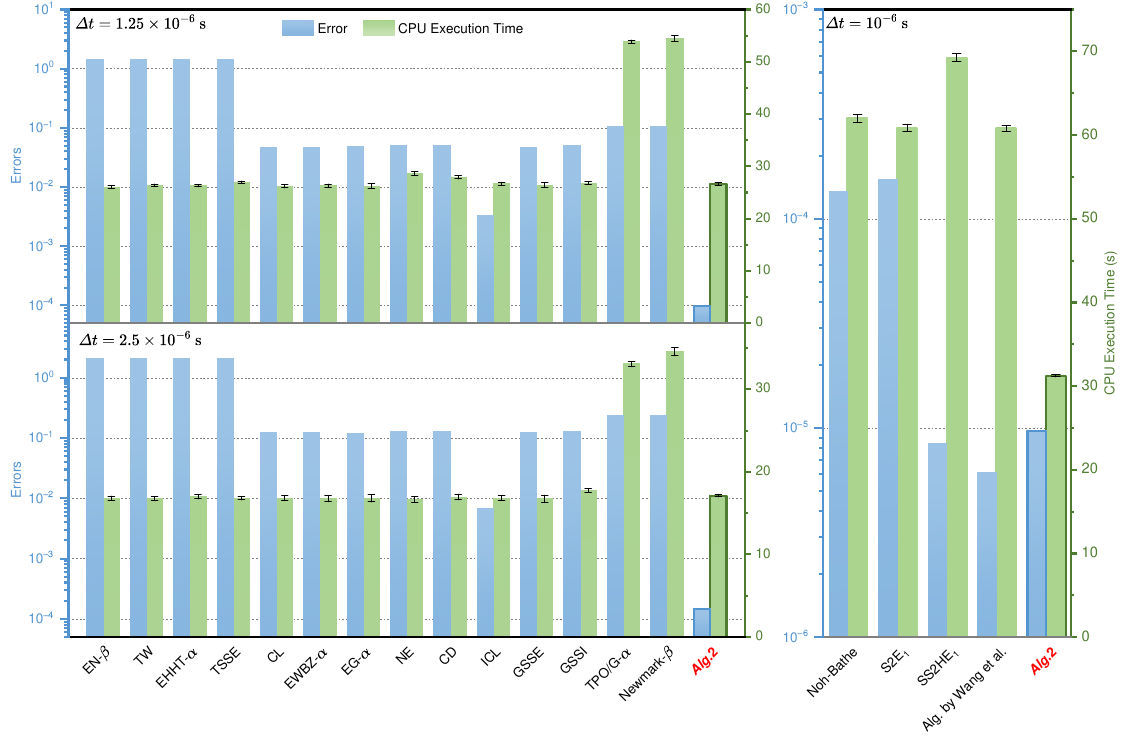}
		\caption{Errors and CPU computation time for the gap-folding rudder computed by different algorithms using various time steps}
		\label{fig:20}
	\end{figure}
	
	\section{Conclusions}
	\label{sec:conclusion}	
    A unified self-starting single-solve integration framework has been formulated and shown to reveal a fundamental limitation: fully explicit single-solve schemes cannot attain third-order accuracy for general dynamics, whereas explicit schemes incorporating the implicit treatment of velocity  can. Two new self-starting explicit algorithms, \novelalgrefs{1}{2}, are proposed and possess the following primary characteristics:
	\begin{enumerate}[(1)]

		\item \textbf{Single-solve per time step}. \novelalgref{1} and \novelalgref{2} require at most one equilibrium solution per time step, in contrast to composite sub-step schemes, thereby ensuring straightforward implementation and minimal overhead.
		\item \textbf{Explicitness}. \novelalgref{1} is fully explicit and never invokes iterative solves, even for nonlinear damping forces.  \novelalgref{2} is an explicit algorithm with implicit treatment of velocity, remaining fully explicit for undamped systems. When nonlinear damping problems are analyzed, \novelalgref{2} necessitates the use of Newton-like iteration schemes.
		\item \textbf{Third-order accuracy}. In the undamped case, \novelalgref{1} achieves third-order accuracy in displacement and velocity and reverts to second order for damping problems.  \novelalgref{2} maintains third-order accuracy in displacement and velocity for general dynamical problems.
		\item \textbf{Dissipative feature}. The built-in numerical dissipation of \novelalgrefs{1}{2} automatically suppresses spurious high-frequency oscillations.
		\item \textbf{No adjustable parameters}. \novelalgrefs{1}{2} have no adjustable parameters, offering the user a straightforward algorithm that is easy to implement in programming.
		
	\end{enumerate}
	
		Extensive numerical validation is carried out across a diverse set of benchmarks to assess the practical performance of the proposed schemes. This included two well‐known nonlinear oscillators—the Van der Pol system and the spring–pendulum model—to probe their behavior under strong nonlinearity; high‐frequency dissipation tests on wave propagation problems, specifically an impact‐loaded one‐dimensional elastic rod and a two‐dimensional square membrane, to evaluate filtering of spurious components; and a large‐scale engineering scenario featuring gap‐induced nonlinearity in a folding rudder assembly. In every case, \novelalgrefs{1}{2} demonstrated clear superiority over existing explicit methods, delivering markedly improved accuracy while requiring minimal computational effort.
	
	\section*{Acknowledgments}
This work is supported by the Postdoctoral Fellowship Program of CPSF (No.~GZC20233464), China Postdoctoral Science Fundation (No.~2024M764165), National Natural Science Foundation of China (No.~12272105), and the Postdoctoral Foundation from Heilongjiang Province (No.~LBH-Z23153).

	\section*{Data Availability Statement}
Data sharing is not applicable to this article as no datasets were generated or analyzed during the current study.

\section*{Conflicts of Interest}
The authors declare no conflicts of interest.
	
	\bibliography{references}
		
	\appendix
	
	\renewcommand{\arraystretch}{1.5}
	
	\section{Butcher tables for some explicit algorithms}
	\label{app1}
	\textbf{EN-$\beta$($0\le {{\rho }_{b}}\le 1$)}\cite{hughesImplicitexplicitFiniteElements1978}
	\begin{equation}
		\begin{array}{*{20}{c}}
			0  & \vline & 0                  & 0     & \vline & \dfrac{3{{\rho }_{b}}-1}{2({{\rho }_{b}}+1)} & 0                                           & \vline & {} & {} \\[6pt]
			\hline \rule{0pt}{20pt}
			{} & \vline & \dfrac{1}{2}-\beta & \beta & \vline & \dfrac{3{{\rho }_{b}}-1}{2({{\rho }_{b}}+1)} & \dfrac{3-{{\rho }_{b}}}{2({{\rho }_{b}}+1)} & \vline & 0  & 1
		\end{array}.
	\end{equation}
	
	\textbf{TW($0\le {{\rho }_{b}}\le 1$)}\cite{maheoNumericalDampingSpurious2013}
	\begin{equation}
		\begin{array}{*{20}{c}}
			1  & \vline & \dfrac{2}{{{\rho }_{b}}+1} & 0 & \vline & 1 & 0 & \vline & {} & {} \\[6pt]
			\hline \rule{0pt}{20pt}
			{} & \vline & \dfrac{2}{{{\rho }_{b}}+1} & 0 & \vline & 1 & 0 & \vline & 0  & 1
		\end{array}.
	\end{equation}
	
	\textbf{EDV1($0\le {{\rho }_{b}}\le 1$)}\cite{zhangTwoNovelExplicit2019}
	\begin{equation}
		\begin{array}{*{20}{c}}
			1  & \vline & \dfrac{3-{{\rho }_{b}}}{2({{\rho }_{b}}+1)} & 0            & \vline & 0 & 0 & \vline & {} & {} \\[6pt]
			\hline \rule{0pt}{20pt}
			{} & \vline & 0                                           & \dfrac{1}{2} & \vline & 1 & 0 & \vline & {} & {}
		\end{array}.
	\end{equation}
	
	\textbf{TSSE($0\le {{\rho }_{b}}\le 1$)}\cite{liDevelopmentCompositeSubstep2021}
	\begin{equation}
		\begin{array}{*{20}{c}}
			1  & \vline & \dfrac{3-{{\rho }_{b}}}{2({{\rho }_{b}}+1)} & 0            & \vline & 0 & 0 & \vline & {} & {} \\[6pt]
			\hline \rule{0pt}{20pt}
			{} & \vline & 0                                           & \dfrac{1}{2} & \vline & 1 & 0 & \vline & 1  & 0
		\end{array}.
	\end{equation}
	
	\textbf{EHHT-$\alpha$($1/2\le {{\rho }_{b}}\le 1$)}\cite{mirandaImprovedImplicitExplicit1989}
	\begin{equation}
		\begin{array}{*{20}{c}}
			\dfrac{2{{\rho }_{b}}}{{{\rho }_{b}}+1} & \vline & \dfrac{(\rho _{b}^{2}+2{{\rho }_{b}}-1){{\rho }_{b}}}{{{({{\rho }_{b}}+1)}^{3}}} & 0                                    & \dfrac{{{\rho }_{b}}(3{{\rho }_{b}}-1)}{{{({{\rho }_{b}}+1)}^{2}}} & 0                                           & \vline & {} & {} \\[6pt]
			\hline \rule{0pt}{20pt}
			{}                                      & \vline & \dfrac{\rho _{b}^{2}+2{{\rho }_{b}}-1}{2{{({{\rho }_{b}}+1)}^{2}}}               & \dfrac{1}{{{({{\rho }_{b}}+1)}^{2}}} & \dfrac{3{{\rho }_{b}}-1}{2({{\rho }_{b}}+1)}                       & \dfrac{3-{{\rho }_{b}}}{2({{\rho }_{b}}+1)} & \vline & 0  & 1
		\end{array}.
	\end{equation}
	
	\textbf{CL($1/2\le {{\rho }_{b}}\le 1$)}\cite{chungNewFamilyExplicit1994}
	\begin{equation}
		\begin{array}{*{20}{c}}
			0  & \vline & 0                                                                                 & 0                                                                  & \vline & 0             & 0            & \vline & 0 & 0 \\
			\hline \rule{0pt}{20pt}
			{} & \vline & \dfrac{\rho _{b}^{3}-5\rho _{b}^{2}-3{{\rho }_{b}}-1}{2{{({{\rho }_{b}}+1)}^{3}}} & \dfrac{4\rho _{b}^{2}+3{{\rho }_{b}}+1}{{{({{\rho }_{b}}+1)}^{3}}} & \vline & -\dfrac{1}{2} & \dfrac{3}{2} & \vline & 0 & 1
		\end{array}.
	\end{equation}
	
	\textbf{ICL($1/2\le {{\rho }_{b}}\le 1$)}\cite{kimSimpleExplicitSingle2019}
	\begin{equation}
		\begin{array}{*{20}{c}}
			1  & \vline & \dfrac{1}{2}                                                            & 0                                                                 & \vline & 1            & 0            & \vline & {} & {} \\[6pt]
			\hline \rule{0pt}{20pt}
			{} & \vline & \dfrac{(3{{\rho }_{b}}+1)(\rho _{b}^{2}+1)}{2{{({{\rho }_{b}}+1)}^{3}}} & \dfrac{\rho _{b}^{2}(1-{{\rho }_{b}})}{{{({{\rho }_{b}}+1)}^{3}}} & \vline & \dfrac{1}{2} & \dfrac{1}{2} & \vline & 0  & 1
		\end{array}.
	\end{equation}
	
	\textbf{EWBZ-$\alpha$($0\le |{{\rho }_{s}}|\le {{\rho }_{b}}\le 1$)}\cite{hulbertExplicitTimeIntegration1996}
	\begin{subequations}
		\begin{equation}
			\begin{array}{*{20}{c}}
				0  & \vline & 0                                                       & 0                                 & \vline & 0                               & 0                                                & \vline & {}                                         & {}                           \\
				\hline \rule{0pt}{20pt}
				{} & \vline & \dfrac{2\beta +{{\alpha }_{m}}-1}{2({{\alpha }_{m}}-1)} & \dfrac{\beta }{1-{{\alpha }_{m}}} & \vline & \dfrac{1}{2({{\alpha }_{m}}-1)} & \dfrac{2{{\alpha }_{m}}-3}{2({{\alpha }_{m}}-1)} & \vline & \dfrac{{{\alpha }_{m}}}{{{\alpha }_{m}}-1} & \dfrac{1}{1-{{\alpha }_{m}}}
			\end{array}
		\end{equation}
		
		\text{where}
		\begin{align}
			{{\alpha }_{m}} &= \dfrac{2{{\rho }_{b}}{{\rho }_{s}}+{{\rho }_{b}}-1}{({{\rho }_{s}}+1)({{\rho }_{b}}+1)}  \\
			\beta &= \dfrac{({{\rho }_{b}}-1)\rho _{s}^{2}+2(\rho _{b}^{2}+{{\rho }_{b}}-2){{\rho }_{s}}-3{{\rho }_{b}}-5}{({{\rho }_{b}}{{\rho }_{s}}-{{\rho }_{s}}-2)({{\rho }_{s}}+1){{({{\rho }_{b}}+1)}^{2}}}.
		\end{align}
	\end{subequations}

	\textbf{EG-$\alpha$($0\le \left| {{\rho }_{s}} \right|\le {{\rho }_{b}}\le 1$)}\cite{liIdenticalSecondOrder2021}
	\begin{subequations}
		\begin{equation}
			\begin{array}{*{20}{c}}
				\dfrac{3}{2}-{{\alpha }_{m}} & \vline & \left( \dfrac{1}{2}-\beta \right)\left( \dfrac{3}{2}-{{\alpha }_{m}} \right) & 0                                 & \vline & \left( \dfrac{3}{2}-{{\alpha }_{m}} \right) & 0 & \vline & {}                                         & {}                           \\[6pt]
				\hline \rule{0pt}{20pt}
				{}                           & \vline & \dfrac{2\beta +{{\alpha }_{m}}-1}{2({{\alpha }_{m}}-1)}                      & \dfrac{\beta }{1-{{\alpha }_{m}}} & \vline & 1                                           & 0 & \vline & \dfrac{{{\alpha }_{m}}}{{{\alpha }_{m}}-1} & \dfrac{1}{1-{{\alpha }_{m}}}
			\end{array}
		\end{equation}
		
		\text{where}	
		\begin{align}
			{{\alpha }_{m}} &= \dfrac{2{{\rho }_{b}}{{\rho }_{s}}+{{\rho }_{b}}-1}{({{\rho }_{s}}+1)({{\rho }_{b}}+1)}  \\
			\beta &= \dfrac{{{({{\rho }_{b}}-1)}^{3}}\rho _{s}^{2}+(\rho _{b}^{2}-\rho _{b}^{3}+13{{\rho }_{b}}-5){{\rho }_{s}}+2\rho _{b}^{2}-10}{2({{\rho }_{b}}+1)(3{{\rho }_{b}}{{\rho }_{s}}+{{\rho }_{b}}-{{\rho }_{s}}-3)({{\rho }_{b}}{{\rho }_{s}}-{{\rho }_{s}}-2)}.
		\end{align}
	\end{subequations}
	
	\textbf{E-GSSSS($0\le \left| {{\rho }_{3b}} \right|\le {{\rho }_{b}}\le 1$)}\cite{shimadaIIntegrationFrameworkAlgorithms2013}
	\begin{subequations}
		\begin{equation}
			\begin{array}{*{20}{c}}
				{{W}_{1}}{{\Lambda }_{1}} & \vline & {{W}_{2}}{{\Lambda }_{2}}-{{W}_{3}}{{\Lambda }_{3}}                                                        & 0                                                                & \vline & {{W}_{1}}{{\Lambda }_{4}}-{{W}_{2}}{{\Lambda }_{5}}                                           & 0                                                   & \vline & {}                                                             & {}                                   \\
				\hline \rule{0pt}{20pt}
				{}                        & \vline & \dfrac{{{W}_{1}}{{\Lambda }_{6}}{{\lambda }_{2}}-{{\lambda }_{3}}{{\eta }_{3}}}{{{W}_{1}}{{\Lambda }_{6}}} & \dfrac{{{\lambda }_{3}}{{\eta }_{3}}}{{{W}_{1}}{{\Lambda }_{6}}} & \vline & \dfrac{{{W}_{1}}{{\Lambda }_{6}}{{\lambda }_{4}}-{{\lambda }_{5}}}{{{W}_{1}}{{\Lambda }_{6}}} & \dfrac{{{\lambda }_{5}}}{{{W}_{1}}{{\Lambda }_{6}}} & \vline & \dfrac{{{W}_{1}}{{\Lambda }_{6}}-1}{{{W}_{1}}{{\Lambda }_{6}}} & \dfrac{1}{{{W}_{1}}{{\Lambda }_{6}}}
			\end{array}
		\end{equation}
		
		\text{where}
		\begin{equation}
			\left\{
			\begin{aligned}
				&{W_1}{\Lambda _6} = \dfrac{{2 + {\rho _{3b}} - {\rho _{3b}}{\rho _b}}}{{(1 + {\rho _b})(1 + {\rho _{3b}})}},\quad {W_1}{\Lambda _1} + {\lambda _5} = \dfrac{{5 + 3{\rho _{3b}} + {\rho _b} - {\rho _{3b}}{\rho _b}}}{{2(1 + {\rho _b})(1 + {\rho _{3b}})}}\\
				&{W_2}{\Lambda _2} - {W_3}{\Lambda _3} + {\lambda _3} = \dfrac{{({\rho _{3b}}{\rho _b} - {\rho _{3b}} - 2)\left[
						2({\rho _{3b}}{\rho_b} - {\rho _{3b}} - 2) 
						- {\Lambda _6}({\rho _{3b}}{\rho _b} - 3{\rho _{3b}} - {\rho _b} - 5)
						\right]}}{{2\Lambda _6^2{{(1 + {\rho _{3b}})}^2}{{(1 + {\rho _b})}^2}}}\\
				&\qquad\qquad - \dfrac{{5 + 3{\rho _b} + {\rho _{3b}}(1 - {\rho _b})(4 + {\rho _{3b}} + 2{\rho _b})}}{{({\rho _{3b}}{\rho _b} - {\rho _{3b}} - 2)(1 + {\rho _{3b}}){{(1 + {\rho _b})}^2}}}\\
				&{\lambda _1} = {\lambda _4} = 1,\quad {\lambda _2} = \dfrac{1}{2},\quad {W_2}{\Lambda _5} = {W_1}({\Lambda _4} - {\Lambda _5}).
			\end{aligned}
			\right.
		\end{equation}
	\end{subequations}
	
	\textbf{NT}\cite{namburuGeneralizedGammaFamily1992}
	\begin{equation}
		\begin{array}{*{20}{c}}
			\dfrac{1}{2} & \vline & 0 & 0            & \vline & 0 & \dfrac{1}{2} & \vline & {} & {}      \\[6pt]
			\hline  \rule{0pt}{20pt}
			{}           & \vline & 0 & \dfrac{1}{2} & \vline & 0 & 1            & \vline & 0  & \quad 1
		\end{array}.
	\end{equation}
	
	\textbf{NE}\cite{newmarkMethodComputationStructural1959}
	\begin{equation}
		\begin{array}{*{20}{c}}
			1  & \vline & \dfrac{1}{2} & \quad 0 & \vline & \dfrac{1}{2} & \quad \dfrac{1}{2} & \vline & {} & {}      \\[6pt]
			\hline \rule{0pt}{20pt}
			{} & \vline & \dfrac{1}{2} & \quad 0 & \vline & \dfrac{1}{2} & \quad \dfrac{1}{2} & \vline & 0  & \quad 1
		\end{array}.
	\end{equation}
	
	\textbf{GSSE(${{\rho }_{s}},{{\rho }_{b}}$)($0\le \left| {{\rho }_{s}} \right|\le {{\rho }_{b}}\le 1$)}\cite{liIdenticalSecondOrder2021}
	\begin{subequations}
		\begin{equation}
			\begin{array}{*{20}{c}}
				p  & \vline & \dfrac{{{p}^{2}}}{2} & 0     & \vline & p               & 0             & \vline & {}             & {}           \\[6pt]
				\hline \rule{0pt}{20pt}
				{} & \vline & \dfrac{1}{2}-\beta   & \beta & \vline & 1-\dfrac{1}{2p} & \dfrac{1}{2p} & \vline & 1-\dfrac{1}{p} & \dfrac{1}{p}
			\end{array}
		\end{equation}
		
		where $p$ and $\beta$ are given as 
		\begin{align}
			p &= -\dfrac{{{\rho }_{b}}{{\rho }_{s}}-{{\rho }_{s}}-2}{({{\rho }_{s}}+1)({{\rho }_{b}}+1)} \label{eq:19} \\
			\beta &= \dfrac{2(1-{{\rho }_{b}}){{\rho }_{b}}\rho _{s}^{3}+(1-{{\rho }_{b}})(\rho _{b}^{2}+6{{\rho }_{b}}-1)\rho _{s}^{2}+2(1-5{{\rho }_{b}}){{\rho }_{s}}+2(1-{{\rho }_{b}})}{2({{\rho }_{s}}+1)({{\rho }_{b}}+1){{({{\rho }_{b}}{{\rho }_{s}}-{{\rho }_{s}}-2)}^{2}}}.\label{eq:20}
		\end{align}
	\end{subequations}
	
	\textbf{GSSI(${{\rho }_{s}},{{\rho }_{b}}$)($0\le \left| {{\rho }_{s}} \right|\le {{\rho }_{b}}\le 1$)}\cite{zhaoSelfstartingDissipativeAlternative2023}
	\begin{equation}
		\begin{array}{*{20}{c}}
			p  & \vline & \dfrac{{{p}^{2}}}{2} & 0     & \vline & \dfrac{p}{2}    & \dfrac{p}{2}  & \vline & {}             & {}           \\[6pt]
			  \hline \rule{0pt}{20pt}
			{} & \vline & \dfrac{1}{2}-\beta   & \beta & \vline & 1-\dfrac{1}{2p} & \dfrac{1}{2p} & \vline & 1-\dfrac{1}{p} & \dfrac{1}{p}
		\end{array}
	\end{equation}
	
	where $p$ and $\beta$ are given in \cref{eq:19,eq:20}.

\end{document}